\documentclass[reqno]{amsart}

\usepackage{enumerate}
\usepackage{amsmath, amssymb, amsthm}
\usepackage{mathrsfs}
\usepackage{esint}
\usepackage[usenames,dvipsnames]{xcolor}\usepackage{mathtools}
\usepackage{hyperref}
\usepackage{bm}
\usepackage{kotex}
\topmargin 0.25in \textheight 8.5in \flushbottom
\setlength{\textwidth}{6.in} 
\setlength{\oddsidemargin}{.25in} 
\setlength{\evensidemargin}{.25in} 
\usepackage{comment}
 % modifying the url format in the references
\hypersetup{%
	colorlinks=true, linkcolor=blue,
	citecolor=ForestGreen
}
\usepackage[paper=letterpaper,margin=1in]{geometry}
\usepackage{acronym}
\usepackage{dsfont}

% \startlocaldefs

\numberwithin{equation}{section}

\newtheorem{theorem}{Theorem}[section]
\newtheorem{lemma}[theorem]{Lemma}
\newtheorem{corollary}[theorem]{Corollary}
\newtheorem{remark}[theorem]{\bf{Remark}}
\newtheorem{assumption}[theorem]{Assumption}
\newtheorem{example}[theorem]{Example}
\newtheorem{definition}[theorem]{Definition}

\theoremstyle{remark}
\theoremstyle{definition}

% mathbb
\newcommand\bL{\mathbb{L}}
\newcommand\bR{\mathbb{R}}

\newcommand\bH{\mathbb{H}}

\newcommand\bZ{\mathbb{Z}}

\newcommand\bE{\mathbb{E}}
\newcommand\bN{\mathbb{N}}
\newcommand\bP{\mathbb{P}}

%mathbf

%mathcal

\newcommand\cB{\mathcal{B}}
\newcommand\cC{\mathcal{C}}

\newcommand\cF{\mathcal{F}}

\newcommand\cH{\mathcal{H}}

\newcommand\cL{\mathcal{L}}
\newcommand\cP{\mathcal{P}}

\newcommand\cS{\mathcal{S}}
\newcommand\cM{\mathcal{M}}

\newcommand\ep{\varepsilon}

\newcommand\cbrk{\text{$]$\kern-.15em$]$}}
\newcommand\opar{\text{\,\raise.2ex\hbox{${\scriptstyle
|}$}\kern-.34em$($}}
\newcommand\cpar{\text{$)$\kern-.34em\raise.2ex\hbox{${\scriptstyle |}$}}\,}

%color

\makeatletter
%Table of Contents
\setcounter{tocdepth}{3}

% Add bold to \section titles in ToC and remove . after numbers
\renewcommand{\tocsection}[3]{%
  \indentlabel{\@ifnotempty{#2}{\bfseries\ignorespaces#1 #2\quad}}\bfseries#3}
% Remove . after numbers in \subsection
\renewcommand{\tocsubsection}[3]{%
  \indentlabel{\@ifnotempty{#2}{\ignorespaces#1 #2\quad}}#3}
%\let\tocsubsubsection\tocsubsection% Update for \subsubsection
%...

\newcommand\@dotsep{4.5}
\def\@tocline#1#2#3#4#5#6#7{\relax
  \ifnum #1>\c@tocdepth % then omit
  \else
    \par \addpenalty\@secpenalty\addvspace{#2}%
    \begingroup \hyphenpenalty\@M
    \@ifempty{#4}{%
      \@tempdima\csname r@tocindent\number#1\endcsname\relax
    }{%
      \@tempdima#4\relax
    }%
    \parindent\z@ \leftskip#3\relax \advance\leftskip\@tempdima\relax
    \rightskip\@pnumwidth plus1em \parfillskip-\@pnumwidth
    #5\leavevmode\hskip-\@tempdima{#6}\nobreak
    \leaders\hbox{$\m@th\mkern \@dotsep mu\hbox{.}\mkern \@dotsep mu$}\hfill
    \nobreak
    \hbox to\@pnumwidth{\@tocpagenum{\ifnum#1=1\bfseries\fi#7}}\par% <-- \bfseries for \section page
    \nobreak
    \endgroup
  \fi}
\AtBeginDocument{
\expandafter\renewcommand\csname r@tocindent0\endcsname{0pt}
}
\def\l@subsection{\@tocline{2}{0pt}{2.5pc}{5pc}{}}
\makeatother

\begin{document}

\title[strong dissipativity]{\texorpdfstring{$L_p$}{Lg}-regularity theory for the stochastic reaction-diffusion equation with super-linear multiplicative noise and strong dissipativity
}

\author[B.-S. \ Han]{Beom-Seok Han}

{\color{red} \address{B.-S. \ Han, 
	{Department of Mathematical Sciences, Korea Advanced Institute of Science and Technology \newline\hphantom{\quad \ \ B.-S. \ Han} (KAIST),	291, Daehak-ro, Yuseong-gu, Daejeon, Republic of Korea}}

\email{bhan91@kaist.ac.kr}}

\author[J.\ Yi]{Jaeyun Yi }
\address{J.\ Yi,
	School of Mathematics, Korea Institute for Advanced Study (KIAS),
	\newline\hphantom{\quad \ \ J. Yi}
	85 Hoegiro Dongdaemun-gu, Seoul 02455, Republic of Korea
	}
\email{jaeyun@kias.re.kr}

% \author{Kyeong-Hun Kim\texorpdfstring{\corref{correspondingauthor}}{Lg}}\ead{kyeonghun@korea.ac.kr}

% \thanks{ This work was supported by the National Research Foundation of Korea (NRF) grant funded by the Korea government (MSIT) (No. NRF-2021R1C1C2007792) and the BK21 FOUR (Fostering Outstanding Universities for Research) funded by the Ministry of Education (MOE, Korea) and National Research Foundation of Korea (NRF)}

% \cortext[correspondingauthor]{Corresponding author}

\subjclass[2020]{60H15, 35R60}

\keywords{Stochastic partial differential equation, 
Stochastic reaction-diffusion equation, 
Semilinear,  
Super-linear, 
Wiener process, 
$L_p$ regularity, 
H\"older regularity}

\begin{abstract}
We study the existence, uniqueness, and regularity of the solution to the stochastic reaction-diffusion equation (SRDE) with colored noise $\dot{F}$:
$$
\partial_t u = a^{ij}u_{x^ix^j} + b^i u_{x^i} + cu - \bar{b} u^{1+\beta} + \xi u^{1+\gamma}\dot F,\quad (t,x)\in \bR_+\times\bR^d; \quad u(0,\cdot) = u_0,
$$ where $a^{ij},b^i,c, \bar{b}$ and $\xi$ are $C^2$ or $L_\infty$ bounded random coefficients. Here $\beta>0$ denotes the degree of the strong dissipativity and $\gamma>0$ represents the degree of stochastic force. Under the reinforced Dalang's condition on $\dot{F}$, we show the well-posedness of the SRDE provided $\gamma < \frac{\kappa(\beta +1)}{d+2}$ where $\kappa>0$ is the constant related to $\dot F$. Our result assures that strong dissipativity prevents the solution from blowing up. Moreover, we provide the maximal H\"older regularity of the solution in time and space.
\end{abstract}

\maketitle

% \tableofcontents
%

\section{Introduction}

In this paper, we investigate the existence, uniqueness, $L_p$-regularity, and H\"older regularity of a solution to the following stochastic partial differential equation (SPDE):
\begin{equation}
\label{main_equation}
\partial_t u = a^{ij}u_{x^ix^j} + b^i u_{x^i} + cu - \bar{b} u^{1+\beta} + \xi u^{1+\gamma}\dot F,\quad (t,x)\in \bR_+\times\bR^d; \quad u(0,\cdot) = u_0.
\end{equation} where $\beta, \gamma>0$ and the coefficients $a^{ij},b^i,c, \bar{b}$ and $\xi$ are $\cP \times \cB(\bR^d)$-measurable real-valued maps with Einstein's summation convention on $i,j=1,...d$. Here $\dot{F}$ is a spatially homogeneous colored noise. More precisely, $\dot{F}$ is a centered generalized Gaussian random field whose covariance is given by 
\begin{equation*}
\bE[F(t,x)F(s,y)] = \delta_0(t-s)f(x-y),
\end{equation*}
where $\delta_0$ is the  Dirac delta distribution and $f$ is a {\it correlation function (or measure)}. In other words, $f$ is a nonnegative and nonnegative definite function/measure. We denote that the spectral measure  $\mu$ of $f$ is a nonnegative tempered measure defined by
\begin{equation}
 	\label{spectral measure}
 	\mu(\xi):=\cF(f)(\xi) := \frac{1}{(2\pi)^{d/2}}\int_{\bR^d} e^{-i\xi\cdot x } f(dx)
\end{equation} 
which is the Fourier transform of $f$.

The SPDE \eqref{main_equation} is also called the stochastic reaction-diffusion equation (SRDE) whose typical form is given by 
\begin{equation}\label{eq:SRDE}
	\partial_t u = \cL u + g(u) +\sigma(u)\dot{F},  \quad (t,x)\in \bR_+\times\bR^d; \quad u(0,\cdot) = u_0,
\end{equation} where $\cL$ is a second-order eilliptic differential operator. The function $g(u)$ denotes the reaction force and the term $\sigma(u)\dot{F}$ denotes the random force called the multiplicative noise. There have been successful investigations on the existence and uniqueness of the solution to the SRDE \eqref{eq:SRDE}. When $g$ and $\sigma$ are globally Lipschitz continuous with at most linear growth, the well-posedness of \eqref{eq:SRDE} was studied in \cite{da2014stochastic,dalang1999extending,walsh1986introduction,kry1999analytic}. For the case where $\sigma$ is locally Lipschitz continuous with linear growth and $g$ is non-Lipschitz, including the Fisher-KPP type ($g(u) = u-u^2$) and the Allen-Cahn type ($g(u)= u-u^3$), there have been also fruitful achievements such as \cite{cerrai2003stochastic,cerrai2011averaging,cerrai2013pathwise} under some monotonicity conditions on $g$ with polynomial growths (see also \cite{marinelli2010uniqueness,marinelli2018well}). We refer to \cite{mueller2011effect,khoshnevisan2020phase} for interesting topics on equations of this type.
 For $\sigma$ with super-linear growh, Mueller \cite{mueller1991long,mueller2000critical} showed that when $\cL = \Delta$ and $g=0$, the solution does blow-up at finite time with positive probability if $|\sigma(u)| \gtrsim|u|^{1+\gamma}$ for $\gamma>\frac{1}{2}$ and does not if $|\sigma(u)| \lesssim (1+|u|)^{1+\gamma}$ for $ 0 \leq\gamma< \frac{1}{2}$ (see also \cite{choi2021regularity} for colored noise cases). We refer to \cite{dalang2019global,chen2022superlinear} for the the case where $|g(u)| \lesssim |u|\log|u|$ and $|\sigma(u)| \lesssim |u|\log|u|^{\kappa/2}$ for some $\kappa>0 $ satisfying the reinforced Dalang's condition on $f$ (see Assumption \ref{assumption on f}).

In the present paper, we discuss the well-posedness problem of \eqref{eq:SRDE} when $g$ is given by the {\it strong dissipative term} $g(u) = -\bar{b}u^{1+\beta}$  and {\it super-linear multiplicative term} $\sigma(u)= \xi u^{1+\gamma}$ for $\beta,\gamma>0$ satisfying $\gamma < {\frac{\kappa(1+\beta)}{d+2}}$. We also assume that the $\cL$ is a second order differential operator with random coefficients. Our purpose is to show that the strong dissipativity of $g$ (denoted by $\beta$) push down the solution to the finite value, thus it allows the random force of $\sigma$ (denoted by $\gamma$) to be stronger. In other words, given any large $\gamma>0$, we prove that a unique solution exists in a suitable Sobolev space (possibly embedded into a H\"older space) until an arbitrary bounded stopping time provided that $\beta>0$ is sufficently large.

In a recent work \cite{salins2022global}, Salins has recently addressed a similar problem on an open bounded domain. He proved that when $g(u) \lesssim -u|u|^{1+\beta}$ and $|\sigma(u)| \lesssim (1+ |u|)^{1+\gamma}$, a local mild solution is indeed a global solution (see Theorem 2.7 of \cite{salins2022global}) if $\gamma<\frac{(1-\eta)\beta}{2}$, where $\eta<1$ is given in (2.9) of \cite{salins2022global}. His proof idea hinges on the estimate of stochasic convolutions with nicely combinating a stopping time argument, which is different from ours (see Remark \ref{rmk: relation between beta and gamma}). In Remark \ref{comparison with salins' result}, we discuss the comparison of our result with Salins'. 

We mention a few words on our contribution and novelty. First, employing the $L_p$-theory developed by Krylov \cite{kry1999analytic}, we obtain the existence, uniqueness, and regularity of the solution to \eqref{main_equation} in $L_p$ spaces. Additionally, our approach allows for random coefficients $a^{ij},b^i,c, \bar{b}$ and $\xi$ (see Assumption \ref{assumption:coefficients}). Moreover, our setting considers $\bR^d$ as the spatial domain, instead of an open bounded domain $D$ which was studied in \cite{salins2022global}. Since our proof does not rely on the series representation of the noise using the eigenfunctions of $\cL$ in $L^2(D)$ with the Dirichlet boundary condition, our results are achieved on the whole spatial domain $\bR^d$. To the best of our knowledge, our main theorem (Theorem \ref{thm:main}) is the first result that establishes the well-posedness of \eqref{main_equation} on $\bR^d$ with $\gamma>\frac{1}{2}$, which is essentially due to the strong dissipativity. Furthermore, we obtain the H\"older regularity of the solution in time and space (see Corollary \ref{Holder regularity of the solution}):
\begin{equation}
\label{eq:Holder regularity of the solution}
u\in C_{t,x}^{\frac{1}{2}\left[ \kappa-\frac{\gamma}{1+\beta}(d+2) \right]-\ep,\kappa-\frac{\gamma}{1+\beta}(d+2)-\ep}([0,T]\times\bR^d)
\end{equation}
almost surely, where $\kappa$ is the constant related to reinforced Dalang's condition (see Assumption \ref{assumption on f}).  Our result is new in that \eqref{eq:Holder regularity of the solution} shows the regularizing effect of the strong dissipativity or a large $\beta$.

Now we pose to give proof ideas of Theorem \ref{thm:main}. Our first step is to construct a local solution $u_m$ (see Lemma \ref{cut_off_lemma}) using the $L_p$-theory of SPDEs (Theorem \ref{theorem_nonlinear_case}) whose reaction term is bounded uniformly (see Lemma \ref{Lq bounds}). We then introduce a auxillary process $v_m$, a solution to a SPDE related to the nonlinear noise part of $u_m$ without the reaction term, which dominates $u_m$, i.e., $u_m\leq v_m$ (see Lemmas \ref{lem:well-posedness of v_m} and \ref{lem:upper bound of u_m by v_m}). We show that $v_m$ (hence $u_m$) is controlled by the reaction term due to the strong dissipativity condition $\gamma<\frac{\kappa(1+\beta)}{d+2}$ (see the choice of $p$ and $p_0$ in the proof of Lemma \ref{lem:noise related solution control}). We finally obtain a uniform bound on $u_m$ in Lemma \ref{lem:uniform bound of local solutions in probability}, which is the essential part of the proof of Theorem \ref{thm:main} (see Section \ref{subsec:proof of main thm}).

Our paper is organized as follows: In Section \ref{sec:preliminaries}, we collect some preliminiaries for SPDEs with colored noise. Section \ref{sec:main_results} presents the main result (Theorem \ref{thm:main}). Section \ref{sec:proof of main thm} is devoted to proving Theorem \ref{thm:main}.
 
 % with a nonnegative initial data $u(0,\cdot) = u_0(\cdot)$, where $\beta,\gamma > 0$, and $\{w_t^k:k\in\bN\}$ is a set of one-dimensional independent Wiener processes. The coefficients $a(\omega,t,x)$, $b(\omega,t,x)$, $c(\omega,t,x)$, and $\bar b(\omega,t,x)$ are $\cP\times\cR(\bR^d)$-measurable. The coefficients $a,b,c,$ and $\bar b$ are bounded. 
% \vspace{2cm}

% The noise $\dot{F}$ is a centered generalized  Gaussian random field whose covariance is given by
% \begin{equation*}
% \bE[F(t,x)F(s,y)] = \delta_0(t-s)f(x-y),
% \end{equation*}
% where $\delta_0$ is the  Dirac delta distribution and $f$ is a {\it correlation function (or measure)}. In other words, $f$ is a nonnegative and nonnegative definite function/measure. We denote that the spectral measure  $\mu$ of $f$ is a nonnegative tempered measure defined by
% \begin{equation}
%  	\label{spectral measure}
%  	\mu(\xi):=\cF(f)(\xi) := \frac{1}{(2\pi)^{d/2}}\int_{\bR^d} e^{-i\xi\cdot x } f(dx)
% \end{equation} 
% which is the Fourier transform of $f$.

We finish the introduction with some notations and conventions. Let $\bN$ and $\bR$ denote the set of natural numbers and real numbers, respectively. We write $:=$ to denote definition.  For a real-valued function $g$, we set
\begin{equation*}
g_+:= \frac{g+|g|}{2},\quad g_- = -\frac{g-|g|}{2}
\end{equation*} For a normed space $F$, a measure space $(X,\mathcal{M},\mu)$, and $p\in [1,\infty)$, a space $L_{p}(X,\cM,\mu;F)$ is a set of $F$-valued $\mathcal{M}^{\mu}$-measurable function such that
\begin{equation}
\label{def_norm}
\| u \|_{L_{p}(X,\cM,\mu;F)} := \left( \int_{X} \| u(x) \|_{F}^{p}\mu(dx)\right)^{1/p}<\infty.
\end{equation}
A set $\mathcal{M}^{\mu}$ is the completion of $\cM$ with respect to the measure $\mu$.
For $\alpha\in(0,1]$ and $T>0$, a set $C^{\alpha}([0,T];F)$ is the set of $F$-valued continuous functions $u$ such that
$$ |u|_{C^{\alpha}([0,T];F)}:=\sup_{t\in[0,T]}|u(t)|_{F}+\sup_{\substack{s,t\in[0,T], \\ s\neq t}}\frac{|u(t)-u(s)|_F}{|t-s|^{\alpha}}<\infty.$$
For $a,b\in \bR$, set
$a \wedge b := \min\{a,b\}$, $a \vee b := \max\{a,b\}$.
Let $\cS = \cS(\bR^d)$ denote the set of Schwartz functions on $\bR^d$. The Einstein's summation convention with respect to $i,j$, and $k$ is assumed. A generic constant $N$ can be different from line to line. We denote by $N=N(a,b,\ldots)$ or $N=N_{a,b,\ldots}$ that $N$ depends only on $a,b,\ldots$. For functions depending on $\omega$, $t$, and $x$, the argument $\omega \in \Omega$ is usually omitted.

%%%%%%%%%%%%%%%%%%%%%%%%%%%%%%%%%%%%%%%%%%%%%%%%%%%%%%%%%%%%%%%%%%%%%%%
\section{Preliminaries} 
\label{sec:preliminaries}

This section is devoted to introducing preliminaries related to \eqref{main_equation}. We first review the definition of the spatially homogeneous colored noise $\dot F$ and the reinforced Dalang's condition on $\dot{F}$ which implies the existence, { uniqueness} and the H\"older continuity of the solution to \eqref{main_equation} (see \cite{sanz2002holder,ferrante2006spdes}). We then introduced the definitions and properties of stochastic Banach spaces $\bH_p^{n+2}(\tau)$ and $\cH_p^{n+2}(\tau)$, where in particular the latter is our solution space (see \cite{kry1999analytic}).

Let $(\Omega, \cF, P)$ be a complete probability space equipped with a filtration $\{\cF_t\}_{t\geq0}$. The filtration $\{\cF_t\}_{t\geq0}$ is assumed to satisfy the usual conditions. A set $\cP$ denotes the predictable $\sigma$-field related to $\{\cF_t\}_{t\geq0}$. First, we provide definitions and assumptions for the noise $\dot F$.

\begin{definition}
\label{def of F}
$\{F(\phi):\phi\in\cS(\bR^{d+1})\}$ on $(\Omega, \cF, P)$ is a mean-zero Gaussian process with covariance functional given by 
\begin{equation}
\label{def of covaiance functional}
\begin{aligned}
\bE \left[F(\phi)F(\psi)\right] 
&= \int_0^\infty  dt \int_{\bR^d}  f(dx)\, (\phi(t,\cdot) *  \tilde \psi(t, \cdot))(x)  \\
& = \int_0^\infty dt  \int_{\bR^d} \mu(d\xi)\,  \hat \phi(t, \xi) \, \overline{\hat \psi(t, \xi)}, 
\end{aligned}
\end{equation}
where $\tilde \psi (t, x):=\psi(t, -x)$, $\hat\phi(t, \xi):= \cF(\phi(t,\cdot)) (\xi)$ is the Fourier transform of $\phi$ with respect to $x$,  $f$ is a nonnegative, and nonnegative definite tempered measure, and $\mu$ is the Fourier transform of $f$ introduced in \eqref{spectral measure}. We say that $f$ is a correlation function or measure.
\end{definition}

\begin{remark} \label{remark:representation}

By the framework of Dalang-Walsh theory (see \cite{walsh1986introduction,dalang1999extending}), the $\cS(\bR^{d+1})$-indexed Gaussian process $F$ can be extended to an $L^2(\Omega)$-valued martingale measure $F(ds,dx)$. Additionally, it should be remarked that the integral with $L^2(\Omega)$-valued martingale measure $F(ds,dx)$ can also be written as an infinite summation of It\^o stochastic integral, which we use throughout the paper. For example, if a predictable process $X(t,\cdot)$ is given as
$$ X(t,x) = \zeta(x)1_{ \opar\tau_1,\tau_2\cbrk}(t),
$$
where $\tau_1$, $\tau_2$ are bounded stopping times, $\opar\tau_1,\tau_2\cbrk:=\{ (\omega,t):\tau_1(\omega)<t\leq \tau_2(\omega) \}$, and $\zeta\in C_c^\infty$, then we have 
\begin{equation*} 
\int_{0}^t\int_{\bR^d}X(s,x)F(ds,dx)=\sum_{k=1}^{\infty}\int_0^t \int_{\bR^d}(f\ast e_k)(x) X(s,x) dxdw^k_s,
\end{equation*}
where $\{w_t^k, k \in \bN\}$ is a collection of  one-dimensional independent Wiener processes and $\{ e_k, k \in \bN\}\subseteq \cS$ is a complete orthonormal basis of a Hilbert space $\cH$ induced by $f$; see \cite{ferrante2006spdes,choi2021regularity}. The construction and properties of $\cH$ and $\{e_k, k\in\bN \}$ are described in \cite[Remark 2.6]{choi2021regularity}.
\end{remark}

We assume the reinforced Dalang's condition for $f$. This assumption is required to obtain the solvability of the stochastic partial differential equation driven by spatially homogeneous colored noise (e.g., see \cite[Remark 4]{ferrante2006spdes}).

\begin{assumption}\label{assumption on f}
The correlation function $f$ satisfies  the following: For some $\kappa\in(0,1]$,
\begin{equation}
\label{reinforced Dalang's condition}
\begin{aligned}
\begin{cases}
\int_{|x|<1} |x|^{2-2\kappa -d} \, f(dx)  < +\infty \quad &\text{if}\quad 0<1-\kappa<\frac{d}{2},	\\
\int_{|x|<1} \log\left(\frac{1}{|x|}\right)\, f(dx) < +\infty \quad &\text{if}\quad 1-\kappa=d/2, \\
\text{no conditions on }f &\text{if}\quad  1-\kappa>d/2.
\end{cases}
\end{aligned}
\end{equation} 

\end{assumption}

\begin{example}[Examples of noise] Assumption \ref{assumption on f} covers a large class of correlation functions. For example, $f(dx) = \delta_0(dx)$ (space-time white noise) satisfies \eqref{reinforced Dalang's condition} for $d=1$ and $\kappa\in(0,1/2)$. One may consider the Riesz type kernel $f(dx) = |x|^{-\alpha}$ with $\alpha\in(0,2(1-\kappa)\wedge d)$ for $\kappa\in(0,1)$. Moreover, the Ornstein-Uhlenbeck type kernel $f(x) = \exp(-|x|^\beta)$ with $\beta \in (0,2]$ and the Brownian motion $f(x)\equiv 1$ also satisfy Assumption \ref{assumption on f} with any $\kappa\in(0,1)$.
\end{example}

% \begin{example}[Riesz Kernel]
% Let $f(dx) = |x|^{-\alpha}$ with $\alpha\in(0,d)$. Then, we deal with the case $\alpha\in(0,2(1-\kappa)\wedge d)$
% \end{example}

\begin{remark}
  The condition on $f$ given in Assumption \ref{assumption on f} is equivalent to 
  \begin{equation}\label{reinforced Dalang's condition_equivalent form}
  \int_{\bR^d} \frac{\mu(d \xi)}{(1+|\xi|^2)^{1-\kappa}} < +\infty \quad \text{for some $\kappa\in(0,1]$,}
\end{equation} where $\mu$ is the spectral measure of $f$ defined in \eqref{spectral measure} (see \cite[Proposition 5.3]{sanz2000path}). The reinforced Dalang's condition guarantees the existence, uniqueness, and the H\"older continuity of the solution when $g$ and $\sigma$ in \eqref{eq:SRDE} are globally Lipschitz (see \cite[Theorem 6]{ferrante2006spdes} and \cite[Theorem 3.5]{choi2021regularity}).
\end{remark}

Next, we introduce definitions and properties of the Bessel potential spaces and stochastic Banach spaces. For more details, we refer the reader to \cite{grafakos2009modern,kry1999analytic,krylov2008lectures}.

\begin{definition}[Bessel potential space]
Let $p>1$ and $n \in \bR$. The space $H_p^n=H_p^n(\bR^d)$ is the set of all tempered distributions $u$ on $\bR^d$ satisfying

$$ \| u \|_{H_p^n} := \left\| (1-\Delta)^{n/2} u\right\|_{L_p} = \left\| \cF^{-1}\left[ \left(1+|\xi|^2\right)^{n/2}\cF(u)(\xi)\right]\right\|_{L_p}<\infty.
$$
Similarly, $H_p^n(\ell_2) = H_p^n(\bR^d;\ell_2)$ is a space of $\ell_2$-valued functions $g=(g^1,g^2,\cdots)$ satisfying
$$ \|g\|_{H_{p}^n(\ell_2)}:= \left\| \left| \left(1-\Delta\right)^{n/2} g\right|_{l_2}\right\|_{L_p} = \left\| \left|\cF^{-1}\left[ \left(1+|\xi|^2\right)^{n/2}\cF(g)(\xi)\right]\right|_{\ell_2} \right\|_{L_p}
< \infty. 
$$
For $n = 0$, we set $L_p := H_p^0$ and $L_p(\ell_2) := H_p^0(\ell_2)$.
\end{definition}

\begin{remark} \label{Kernel}
It is well-known that for $n\in (0,\infty)$ and $u\in \cS$, we have
\begin{equation*}
(1-\Delta)^{-n/2}u(x)=\int_{\bR^d}R_{n}(x-y)u(y)dy,
\end{equation*}
where 
\begin{equation*} 
|R_n(x)| \leq N(n,d)\left(e^{-|x|/2}1_{|x|\geq2} + A_n(x)1_{|x|<2}\right)
\end{equation*}
and
\begin{equation*}
\begin{aligned}
A_{n}(x):=
\begin{cases}
|x|^{n-d} + 1 + O(|x|^{n-d+2}) \quad &\mbox{if} \quad 0<n<d,\\ 
\log(2/|x|) + 1 + O(|x|^{2}) \quad &\mbox{if} \quad n=d,\\ 
1 + O(|x|^{n-d}) \quad &\mbox{if} \quad n>d.
\end{cases}
\end{aligned}
\end{equation*}
Additionally, notice that
$$ (R_{n} \ast R_{n})(x) = R_{2n}(x).
$$
For more information, see \cite[Proposition 1.2.5.]{grafakos2009modern}.

\end{remark}

\begin{remark}
Observe that Assumption \ref{assumption on f} yields
\begin{equation}
\label{def of nu kappa}
\nu_{\kappa}:=\int_{\bR}R_{2(1-\kappa)}(x)f(dx) < \infty,
\end{equation}
where $\kappa$ is the constant introduced in Assumption \ref{assumption on f}. The finiteness of $\nu_k$ is essential to obtain the unique solvability of local solutions of \eqref{main_equation}. See Lemma \ref{cut_off_lemma} and \eqref{cut_off_lower_order_cal_2}.

\end{remark}

Below, we introduce the space of pointwise multipliers for $H_p^n$.

\begin{definition}
\label{def_pointwise_multiplier}
Fix $n\in\bR$ and $\alpha\in[0,1)$ such that $\alpha = 0$ if $n\in\bZ$ and $\alpha>0$ if $|n|+\alpha$ is not an integer. Define
\begin{equation*}
\begin{aligned}
B^{|n|+\alpha} = 
\begin{cases}
B(\bR^d) &\quad\text{if } n = 0, \\
C^{|n|-1,1}(\bR^d) &\quad\text{if $n$ is a nonzero integer}, \\
C^{|n|+\alpha}(\bR^d) &\quad\text{otherwise},
\end{cases}
\end{aligned}
\end{equation*}
where $B(\bR^d)$ is the space of bounded Borel functions on $\bR^d$, $C^{|n|-1,1}(\bR^d)$ is the space of $|n|-1$ times continuous differentiable functions whose derivatives of $(|n|-1)$-th order derivative are Lipschitz continuous, and $C^{|n|+\alpha}$ is the real-valued H\"older spaces. The space $B(\ell_2)$ denotes a function space with $\ell_2$-valued functions, instead of real-valued function spaces.

\end{definition}

Next, we review the properties of Bessel potential space $H_p^n$.

\begin{lemma}
\label{prop_of_bessel_space} 

Let $p>1$ and $n \in \bR$. 
\begin{enumerate}[(i)]
\item 
\label{dense_subset_bessel_potential}
The space  $C_c^\infty(\bR^d)$ is dense in $H_{p}^{n}$. 

\item
\label{sobolev-embedding} 
Let $n - d/p = j+\alpha$ for some $j=0,1,\cdots$ and $\alpha\in(0,1]$. Then, for any  $i\in\{ 0,1,\cdots,j \}$, we have
\begin{equation} 
\label{holder embedding}
\left| D^i u \right|_{C(\bR^d)} + \left[D^n u\right]_{\cC^\alpha(\bR^d)} \leq N \| u \|_{H_{p}^n},
\end{equation}
where $N = N(n,p)$ and $\cC^\alpha$ is a Zygmund space.

\item
\label{bounded_operator}
The operator $D_i:H_p^{n}\to H_p^{n+1}$ is bounded. Moreover, for any $u\in H_p^{n+1}$,
$$ \left\| D^i u \right\|_{H_p^n} \leq N\| u \|_{H_p^{n+1}},
$$
where $N = N(n,p)$.

\item
\label{norm_bounded}
Let $n'\leq n$ and $u\in H_p^n$. Then, $u\in H_p^{n'}$ and 
$$ \| u \|_{H_p^{n'}} \leq \| u \|_{H_p^n}. 
$$

\item 
\label{iso} (isometry). For any $n',n\in\bR$, the operator $(1-\Delta)^{n'/2}:H_p^n\to H_p^{n-n'}$ is an isometry.

\item
\label{multi_ineq} (multiplicative inequality). Let 
\begin{equation*} \label{condition_of_constants_interpolation}
\begin{gathered}
\ep\in[0,1],\quad p_i\in(1,\infty),\quad n_i\in \bR,\quad i=0,1,\\
n=\ep n_0+(1-\ep)n_1,\quad1/p=\ep/p_0+(1-\ep)/p_1.
\end{gathered}
\end{equation*}
Then, we have
\begin{equation*}
\|u\|_{H^n_{p}} \leq \|u\|^{\ep}_{H^{n_0}_{p_0}}\|u\|^{1-\ep}_{H^{n_1}_{p_1}}.
\end{equation*}

\item \label{pointwise_multiplier}
Let $u\in H_p^n$. Then, we have
\begin{equation*}
\| au \|_{H_p^n} \leq N(n,p)\| a \|_{B^{|n|+\alpha}}\| u \|_{H_p^n}\quad\text{and}\quad\| bu \|_{H_p^n(\ell_2)} \leq N(n,p)\| b \|_{B^{|n|+\alpha}(\ell_2)}\| u \|_{H_p^n},
\end{equation*}
where $B^{|n|+\alpha}$ and $B^{|n|+\alpha}(\ell_2)$ are introduced in Definition \ref{def_pointwise_multiplier}.
\end{enumerate}
\end{lemma}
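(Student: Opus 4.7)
The plan is to treat these assertions as essentially classical consequences of the Fourier-analytic definition of $H_p^n$ and standard multiplier theory, since each item is either immediate from the definition or appears in the references \cite{grafakos2009modern,krylov2008lectures,kry1999analytic}. I would therefore present a proof whose body amounts to pointing at those references and indicating the key ingredient for each part, expanding only on the items (\ref{multi_ineq}) and (\ref{pointwise_multiplier}), which are the only ones requiring nontrivial work.

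For (\ref{dense_subset_bessel_potential}), I would note that $\cS$ is dense in $H_p^n$ because $(1-\Delta)^{-n/2}$ is an isometry onto $L_p$ and $\cS$ is dense in $L_p$; then one passes from $\cS$ to $C_c^\infty$ by a standard cut-off argument using the smooth-tail decay of Bessel kernels in Remark \ref{Kernel}. Item (\ref{sobolev-embedding}) is the Bessel-potential Sobolev embedding; combining the isometry (\ref{iso}) with the explicit kernel bounds in Remark \ref{Kernel} one estimates $D^i u$ as a convolution against an $L_{p'}$ kernel and recovers the Zygmund semi-norm from the mean-value characterization. For (\ref{bounded_operator}) one reduces, via (\ref{iso}), to proving that the Fourier multiplier $\xi_i(1+|\xi|^2)^{-1/2}$ is an $L_p$-bounded operator, which follows from the Mikhlin--H\"ormander multiplier theorem. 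Item (\ref{norm_bounded}) is immediate from $(1+|\xi|^2)^{(n'-n)/2} \le 1$ and boundedness of the Bessel multiplier $(1-\Delta)^{(n'-n)/2}$ on $L_p$, while (\ref{iso}) is a direct unwinding of the definition of the norm.

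For the multiplicative inequality (\ref{multi_ineq}), I would invoke complex interpolation: the pair $(H_{p_0}^{n_0},H_{p_1}^{n_1})$ has complex interpolation space $[H_{p_0}^{n_0},H_{p_1}^{n_1}]_\ep = H_p^n$ with the stated indices, and the advertised inequality is exactly the three-lines interpolation estimate. For the pointwise multiplier assertion (\ref{pointwise_multiplier}), I would split into three cases: (a) for $n\in\bN_0$ the bound follows from the Leibniz rule together with the $L_p$-boundedness of Riesz transforms; (b) for negative integer $n$ the estimate is obtained by duality, using that the dual of $H_p^n$ is $H_{p'}^{-n}$ and that multiplication by $a$ on the dual is again controlled by $\|a\|_{B^{|n|+\alpha}}$; (c) for non-integer $n$ one interpolates between the two nearest integers via (\ref{multi_ineq}), which is why $\alpha>0$ must be inserted when $|n|+\alpha\notin\bZ$ to retain the H\"older regularity of $a$. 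The $\ell_2$-valued version follows from the same argument run coordinatewise with vector-valued Calder\'on--Zygmund theory.

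The main obstacle I expect is (\ref{pointwise_multiplier}): tracking the precise dependence of the constant on $\|a\|_{B^{|n|+\alpha}}$ through the negative-$n$ duality step and the subsequent interpolation is where the argument is most delicate, since each reduction must preserve the explicit bound. Because all of the above is standard, I anticipate that the bulk of the final proof will be a short derivation of (\ref{sobolev-embedding}) using Remark \ref{Kernel} together with a citation of \cite[Section~13.8]{krylov2008lectures} or \cite[Chapter~1]{grafakos2009modern} for the remaining parts.
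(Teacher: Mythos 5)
Your proposal is correct and matches the paper's approach: the paper proves this lemma purely by citation (Theorems 13.3.7, 13.8.1, 13.3.10, Corollary 13.3.9, and Exercise 13.3.20 of \cite{krylov2008lectures} for parts (i)--(vi), and \cite[Lemma 5.2]{kry1999analytic} for the pointwise multiplier estimate), and the standard arguments you sketch for each item are exactly the ones underlying those references. The extra detail you offer on the interpolation and multiplier parts is sound but not needed beyond the citations.
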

\begin{proof}
For \eqref{dense_subset_bessel_potential}-\eqref{multi_ineq}, see Theorem 13.3.7 (i), Theorem 13.8.1, Theorem 13.3.10, Corollary 13.3.9, Theorem 13.3.7 (ii), Exercise 13.3.20 of \cite{krylov2008lectures}, respectively. For \eqref{pointwise_multiplier}, see  \cite[Lemma 5.2]{kry1999analytic}.
\end{proof}

Throughout the paper, let $T>0$ be a nonrandom constant. We introduce stochastic Banach spaces $\bH_p^{n}(\tau)$ and $\cH_p^{n}(\tau)$. For more details, see \cite[Section 3]{kry1999analytic}.

\begin{definition}[Stochastic Banach spaces]
For a bounded stopping time $\tau\leq T$, let us denote $\opar0,\tau\cbrk:=\{ (\omega,t):0<t\leq \tau(\omega) \}$.
\begin{enumerate}[(i)]
\item \label{def:Hp,Up}
For $\tau\leq T$, 
\begin{gather*}
\bH_{p}^{n}(\tau) := L_p(\opar0,\tau\cbrk, \mathcal{P}, d P \times dt ; H_{p}^n),\\
\bH_{p}^{n}(\tau,\ell_2) := L_p(\opar0,\tau\cbrk,\mathcal{P}, dP \times dt;H_{p}^n(\ell_2)),\\
U_{p}^{n} :=  L_p(\Omega,\cF_0, dP ; H_{p}^{n-2/p}).
\end{gather*}	
For convenience, we write $\bL_p(\tau):=\bH^{0}_{p}(\tau)$ and $\bL_p(\tau,\ell_2):=\bH^{0}_{p}(\tau,\ell_2)$.

\item \label{def:Hp-gamma}
The norm of each space is defined in the natural way. For example, 
\begin{equation} \label{norm}
\| u \|^p_{\bH^{n}_{p}(\tau)} := \bE \left[\int^{\tau}_0 \| u(t) \|^p_{H^{n}_{p}}dt\right]. 
\end{equation}

\end{enumerate}
\end{definition}

\begin{definition}[Solution spaces] 
\label{definition_of_sol_space}
Let $\tau\leq T$ be a bounded stopping time and $p\geq2$. 

\begin{enumerate}[(i)]
\item 
For $u\in \bH_p^{n+2}(\tau)$, we write $u\in\cH^{n+2}_p(\tau)$ if there exists $u_0\in U_{p}^{n+2}$ and  $(h,g)\in
\bH_{p}^{n}(\tau)\times\bH_{p}^{n+1}(\tau,\ell_2)$ such that
\begin{equation*}
du = hdt+\sum_{k=1}^{\infty} g^k dw_t^k,\quad   t\in (0, \tau]\,; \quad u(0,\cdot) = u_0
\end{equation*}
in the sense of distributions. In other words, for any $\phi\in \cS$, the equality
\begin{equation} \label{def_of_sol}
(u(t,\cdot),\phi) = (u_0,\phi) + \int_0^t(h(s,\cdot),\phi)ds + \sum_{k=1}^{\infty} \int_0^t(g^k(s,\cdot),\phi)dw_s^k
\end{equation}
holds for all $t\in [0,\tau]$ almost surely. Here, $\left\{w_t^k:k\in \bN\right\}$ is a set of one-dimensional independent Wiener process. 

\item

The norm is defined as
\begin{equation}
\label{def_of_sol_norm}
\| u \|_{\cH_{p}^{n+2}(\tau)} :=  \| u \|_{\mathbb{H}_{p}^{n+2}(\tau)} + \| h \|_{\mathbb{H}_{p}^{n}(\tau)} + \| g \|_{\mathbb{H}_{p}^{n+1}(\tau,\ell_2)} + \| u_0 \|_{U_{p}^{n+2}}.
\end{equation}

\item \label{def_of_local_sol_space}
For a stopping time $\tau \in [0,\infty]$, we write $u \in \cH_{p,loc}^{n+2}(\tau)$ if there exists a sequence of bounded stopping times $\{ \tau_n : n\in\bN \}$ such that $\tau_n\uparrow \tau$ (a.s.) as $n\to\infty$ and $u\in \cH_{p}^{n+2}(\tau_n)$ for each $n$. We write $u = v$ in $\cH_{p,loc}^{n+2}(\tau)$ if there exists a sequence of bounded stopping times $\{ \tau_n : n\in\bN \}$ such that $\tau_n\uparrow\tau$ (a.s.) as $n\to\infty$ and $u = v$ in $\cH_{p}^{n+2}(\tau_n)$ for each $n$. We omit $\tau$ if $\tau = \infty$. In other words,  $\cH_{p,loc}^{n+2}=\cH_{p,loc}^{n+2}(\infty)$.

\item If $n+2 = 0$, we use $\cL$ instead of $\cH$. For example, $\cL_p(\tau) := \cH_p^0(\tau)$.

\end{enumerate}

\end{definition}

% \begin{remark}
% Let $p\geq 2$ and $n\in \bR$. For any $g\in \bH^{n+1}_{p}(\tau,\ell_2)$,  the series of stochastic integral in \eqref{def_of_sol} converges uniformly in $t$ in probability on $[0,\tau \wedge T]$ for any $T$. Therefore, $(u(t,\cdot),\phi)$ is continuous in $t$ (See, e.g., \cite[Remark 3.2]{kry1999analytic}).
% \end{remark}

% {\red
% Now, we formulate the meaning of the solution to \eqref{main_equation} in the weak sense.

% \begin{definition}[Weak solution] We say that $u$ is a solution to \eqref{main_equation} if for any $\phi \in \cS$, the equality
% \begin{equation*}
% \begin{aligned}
% (u(t,\cdot),\phi) 
% &= (u_0,\phi) + \int_0^t\left( a^{ij}(s,\cdot)u_{x^ix^j}(s,\cdot)+b^i(s,\cdot)u_{x^i}(s,\cdot)+c(s,\cdot)u(s,\cdot),\phi \right)ds \\
% &\quad\quad -\int_0^t(\bar b^i(s,\cdot)u^{1+\beta}(s,\cdot),\phi)ds + \int_0^t\int_{\bR^d}\xi(s,x)u^{1+\gamma}(s,x)\phi(x) F(ds,dx)
% \end{aligned}
% \end{equation*}
% holds for all $t\in[0,\tau]$ almost surely.  We interprete the last integral on the right-hand side as in the sense of  Remark \ref{remark:representation} (see also \cite[Theorem 3.10]{kry1999analytic}); if $u\in \cH_{p}^n(\tau)$ for $n> 0$ and $p\geq2$, then we have
% $$ \int_{0}^t\int_{\bR^d}\xi(s,x)u^{1+\gamma}(s,x)\phi(x)\,F(ds,dx)=\sum_{k=1}^{\infty}\int_0^t\int_{\bR^d} \xi(s,x)u^{1+\gamma}(s,x)(f\ast e_k)(x) \phi(x)dxdw^k_s
% $$
% on $t\in(0,\tau]$ almost surely.

% \end{definition}}

In what follows, we present properties of the solution space $\cH_p^{n+2}(\tau)$ and the H\"older embedding theory.

\begin{theorem} 
\label{embedding}
Let $\tau\leq T$ be a bounded stopping time.
\begin{enumerate}[(i)]

\item \label{completeness}
For any $p\geq2$, $n\in\bR$, $\cH_p^{n+2}(\tau)$ is a Banach space with the norm $\| \cdot \|_{\cH_p^{n+2}(\tau)}$.

\item \label{large-p-embedding} 
If $p>2$, $n\in\bR$, and $1/p < \alpha_1 < \alpha_2 < 1/2$, then for any  $u\in\cH_{p}^{n+2}(\tau)$, we have $u\in C^{\alpha_{1}-1/p}\left([0,\tau];H_{p}^{n+2-2\alpha_2}\right)$ (a.s.) and
\begin{equation} 
\label{solution_embedding}
\mathbb{E}| u |^p_{C^{\alpha_1-1/p}\left([0,\tau];H_{p}^{n + 2 - 2\alpha_2} \right)} \leq N(\alpha_1,\alpha_2,n,d,p,T)\| u \|^p_{\cH_{p}^{n+2}(\tau)}.
\end{equation}

\item \label{gronwall_type_ineq} 
Let $p > 2$, $n\in\bR$, and $u\in\cH_p^{n+2}(\tau)$. If there exists $n' < n$ such that
\begin{equation*}
\| u \|_{\cH_p^{n+2}(\tau\wedge t)}^p \leq N_0 + N_1 \| u \|_{\bH_p^{n'+2}(\tau\wedge t)}^p
\end{equation*}
for all $t\in(0,T)$, then we have
\begin{equation} \label{modified_Gronwall}
\| u \|_{\cH_p^{n+2}(\tau\wedge T)}^p \leq N_0N,
\end{equation}
where $N =  N(n,d,p,N_1,T)$.

\end{enumerate}
\end{theorem}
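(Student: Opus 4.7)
Plan for Theorem \ref{embedding}. The three parts are standard results in the spirit of Krylov's $L_p$-theory; I sketch the approach I would take for each.

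\textbf{Part (i): Completeness.} This is a routine Banach-space argument. Given a Cauchy sequence $\{u^{(m)}\}\subset \cH_p^{n+2}(\tau)$ with associated data $(u_0^{(m)}, h^{(m)}, g^{(m)})$, the very definition \eqref{def_of_sol_norm} of the norm forces each coordinate to be Cauchy in its own Banach space: $u_0^{(m)}\to u_0$ in $U_p^{n+2}$, $u^{(m)}\to u$ in $\bH_p^{n+2}(\tau)$, $h^{(m)}\to h$ in $\bH_p^n(\tau)$, and $g^{(m)}\to g$ in $\bH_p^{n+1}(\tau,\ell_2)$. It remains to verify that the limiting quadruple satisfies the distributional identity \eqref{def_of_sol}. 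For each test function $\phi\in\cS$, the Lebesgue--Bochner integrals converge by dominated convergence, while the stochastic integrals converge in $L_p(\Omega)$ via the Burkholder--Davis--Gundy inequality applied to $\sum_k\int_0^{\cdot}(g^{k,(m)}-g^k,\phi)\,dw_s^k$. A subsequence extraction yields the a.s.\ identity at each rational $t$, extended to all $t\in[0,\tau]$ by sample-path continuity of both sides.

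\textbf{Part (ii): H\"older embedding.} First, using the isometry of Lemma \ref{prop_of_bessel_space}\eqref{iso}, I would reduce to the case $n+2=0$ by replacing $(u,h,g,u_0)$ with their images under $(1-\Delta)^{(n+2)/2}$. Then decompose $u=I_1+I_2$ with $I_1(t)=u_0+\int_0^t h\,ds$ and $I_2(t)=\sum_k\int_0^t g^k\,dw^k$. For the deterministic piece the standard one-dimensional Sobolev embedding $W_p^1\hookrightarrow C^{1-1/p}$ gives
$\bE |I_1|_{C^{1-1/p}([0,\tau];H_p^n)}^p \lesssim \|u_0\|_{U_p^{n+2}}^p+\|h\|_{\bH_p^n(\tau)}^p$.
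For $I_2$ I would invoke the stochastic factorization
\begin{equation*}
I_2(t)=c_{\alpha_2}\int_0^t(t-s)^{\alpha_2-1}Y(s)\,ds,\qquad Y(s):=\sum_k\int_0^s(s-r)^{-\alpha_2}g^k(r)\,dw_r^k,
\end{equation*}
control $Y$ in $L_p(\Omega\times[0,\tau]; H_p^{n+2-2\alpha_2}(\ell_2))$ by BDG, and combine the fractional-integral mapping properties of the kernel $(t-s)^{\alpha_2-1}$ with Kolmogorov's continuity criterion to obtain the claimed H\"older regularity of exponent $\alpha_1-1/p$. The main obstacle is the bookkeeping needed to convert the fractional-time smoothing encoded by $\alpha_2$ into a loss of exactly $2\alpha_2$ spatial derivatives while retaining the desired Hölder-in-time exponent $\alpha_1-1/p$.

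\textbf{Part (iii): Gronwall-type inequality.} The plan is interpolation combined with a time Gronwall argument. Assume first $n-n'<1$ (otherwise iterate the scheme below finitely many times, each round lowering the auxiliary index further); fix $b\in(n+1,n'+2)$. By the multiplicative inequality of Lemma \ref{prop_of_bessel_space}\eqref{multi_ineq}, pointwise in $(\omega,s)$,
\begin{equation*}
\|u(s)\|_{H_p^{n'+2}}\leq \|u(s)\|_{H_p^{n+2}}^{\theta}\|u(s)\|_{H_p^{b}}^{1-\theta},\qquad \theta=\tfrac{n'+2-b}{n+2-b}\in(0,1).
\end{equation*}
Raising to the $p$-th power, integrating in $(\omega,s)$, and applying Young's inequality yield
\begin{equation*}
\|u\|_{\bH_p^{n'+2}(\tau\wedge t)}^p \leq \ep\|u\|_{\bH_p^{n+2}(\tau\wedge t)}^p + C(\ep)\|u\|_{\bH_p^{b}(\tau\wedge t)}^p.
\end{equation*}
Choosing $\ep=1/(2N_1)$ and plugging into the hypothesis of (iii) absorbs the top-order term into the left-hand side. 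Taking $\alpha_2=(n+2-b)/2\in(1/p,1/2)$, part (ii) delivers $\bE\sup_{s\leq \tau\wedge t}\|u(s)\|_{H_p^{b}}^p\leq N\|u\|_{\cH_p^{n+2}(\tau\wedge t)}^p$, hence by Fubini
\begin{equation*}
\|u\|_{\bH_p^{b}(\tau\wedge t)}^p \leq \int_0^t \bE\sup_{s'\leq \tau\wedge s}\|u(s')\|_{H_p^b}^p\,ds \leq N\int_0^t \|u\|_{\cH_p^{n+2}(\tau\wedge s)}^p\,ds.
\end{equation*}
Gronwall's inequality applied to $t\mapsto \|u\|_{\cH_p^{n+2}(\tau\wedge t)}^p$ then closes the estimate with $\|u\|_{\cH_p^{n+2}(\tau\wedge T)}^p\leq N_0 N$, as claimed.
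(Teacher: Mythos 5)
Your proof of part (iii) is exactly the paper's argument: interpolate $\|u\|_{\bH_p^{n'+2}}$ between the top norm and a lower-order one via Lemma \ref{prop_of_bessel_space}\eqref{multi_ineq}, absorb the top-order term by Young's inequality, control the lower-order term through the embedding of part (ii) applied up to $\tau\wedge s$, and close with Gr\"onwall. (Your worry about $n-n'\geq 1$ is moot: by Lemma \ref{prop_of_bessel_space}\eqref{norm_bounded} the hypothesis with $n'$ implies the same hypothesis with any $n''\in(n',n)$, so $n-n'$ may be taken as small as you like without iterating.) For parts (i) and (ii) the paper offers no proof at all --- it cites Theorems 3.7 and 7.2 of \cite{kry1999analytic} --- and your completeness argument for (i) is the standard one underlying that citation.

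Your sketch of part (ii), however, has a genuine gap. The decomposition $u=I_1+I_2$ with $I_1(t)=u_0+\int_0^t h\,ds$ and $I_2(t)=\sum_k\int_0^t g^k\,dw^k$ cannot reach the target space $H_p^{n+2-2\alpha_2}$: since $\alpha_2<1/2$, that space requires strictly more than $n+1$ spatial derivatives, whereas $h\in\bH_p^{n}(\tau)$ only places $I_1$ in $H_p^{n}$ and $g\in\bH_p^{n+1}(\tau,\ell_2)$ only places $I_2$ (and your $Y$) in $H_p^{n+1}$; indeed your own display for $I_1$ lands in $C^{1-1/p}([0,\tau];H_p^{n})$, which is not the space appearing in \eqref{solution_embedding}, and the claim that $Y\in L_p(\Omega\times[0,\tau];H_p^{n+2-2\alpha_2})$ does not follow from BDG for an integrand that is merely $H_p^{n+1}(\ell_2)$-valued. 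The information you are not using is the component $\|u\|_{\bH_p^{n+2}(\tau)}$ of the $\cH_p^{n+2}(\tau)$-norm. Krylov exploits it by rewriting $du=\left(\Delta u+(h-\Delta u)\right)dt+g^k\,dw_t^k$, so that $u$ is the solution of a stochastic heat equation with free terms $h-\Delta u\in\bH_p^{n}(\tau)$ and $g\in\bH_p^{n+1}(\tau,\ell_2)$; in the resulting Duhamel representation the analytic smoothing of the heat semigroup gains the missing $2-2\alpha_2$ (resp.\ $1-2\alpha_2$) spatial derivatives at an integrable cost in $t-s$, and only then do factorization, BDG, and Kolmogorov's criterion yield the H\"older bound. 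Without inserting the semigroup, the factorization identity you wrote merely reproduces $I_2$ in $H_p^{n+1}$ and the argument stalls.
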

{\begin{proof}

For \eqref{completeness} and \eqref{large-p-embedding}, see Theorem 3.7 and Theorem 7.2 of \cite{kry1999analytic}, respectively. Thus, we only prove \eqref{gronwall_type_ineq}. Because $\gamma_0 < \gamma$, Lemma \ref{prop_of_bessel_space} \eqref{multi_ineq} yields
\begin{equation*}
\begin{aligned}
\|u\|_{\cH_p^{\gamma+2}(\tau\wedge t)}^p &\leq N_0 +  N_1 \| u  \|_{\bH^{\gamma_0+2}_p(\tau\wedge t)}^p \\
&\leq N_0 +  \frac{1}{2} \bE\int_0^{\tau\wedge t} \| u(s,\cdot) \|_{H^{\gamma+2}_p}^p ds + N \bE\int_0^{\tau\wedge t} \| u(s,\cdot) \|_{H^{\gamma}_p}^p ds\\
% &\leq N_0 +  \frac{1}{2}  \| u \|_{\cH^{\gamma+2}_p(\tau\wedge t)}^p + N \int_0^{t} \bE 1_{s\leq \tau}\| u(s,\cdot) \|_{H^{\gamma}_p}^p ds\\
&\leq N_0 + \frac{1}{2}  \| u \|_{\cH^{\gamma+2}_p(\tau\wedge t)}^p + N \int_0^{t} \bE \sup_{r\leq \tau\wedge s}\| u(r,\cdot) \|_{H^{\gamma}_p}^p ds, \\
\end{aligned}
\end{equation*}
where $N = N(d,p,\gamma)$.
By subtraction and \eqref{solution_embedding}, we have
\begin{equation*}
\begin{aligned}
\|u\|_{\cH_p^{\gamma+2}(\tau\wedge t)}^p
&\leq 2N_0 + 2N \int_0^{t} \| u \|_{\cH^{\gamma+2}_p(\tau\wedge  s)}^p ds,
\end{aligned}
\end{equation*}
where $N = N(d,p,\gamma,T)$. By Gr\"onwall's inequality, we have \eqref{modified_Gronwall}. The theorem is proved.
\end{proof}}

\begin{corollary} 
\label{embedding_corollary}

Let $\tau\leq T$ be a bounded stopping time.
Suppose $n\in (0,1)$, $p\in[2,\infty)$, $\alpha_1,\alpha_2\in(0,\infty)$ satisfy
\begin{equation} \label{condition_for_alpha_beta}
\frac{1}{p} < \alpha_1 < \alpha_2 <  \frac{1}{2}\left(n-\frac{d}{p}\right).
\end{equation}
Then, we have $u \in C^{\alpha_1-1/p}([0,\tau];C^{n-2\alpha_2-1/p}(\bR^d) )$ almost surely and 
\begin{equation} \label{sol_embedding}
\bE \|u\|^p_{C^{\alpha_1-1/p}([0,\tau];C^{n-2\alpha_2-d/p}(\bR^d) )}\leq N(\alpha_1,\alpha_2,\kappa,d,n,p,T)\|u\|_{\cH^{n}_{p} (\tau)}^p.
\end{equation}

\end{corollary}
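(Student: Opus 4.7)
The plan is to reduce Corollary \ref{embedding_corollary} to a two-step chain: first use Theorem \ref{embedding}\eqref{large-p-embedding} to get temporal H\"older regularity in a Bessel potential space, and then apply the Sobolev--H\"older embedding from Lemma \ref{prop_of_bessel_space}\eqref{sobolev-embedding} fiberwise in time to convert the spatial Bessel regularity into a spatial H\"older regularity.

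First, I would reindex Theorem \ref{embedding}\eqref{large-p-embedding} so that it applies to $u \in \cH_p^n(\tau)$: concretely, replace $n$ there by $n-2$. Then, provided $1/p < \alpha_1 < \alpha_2 < 1/2$ (which is implied by the corollary's assumption $\alpha_2 < \tfrac12(n-d/p)$, since $n<1$ forces $\alpha_2 < 1/2$), we obtain almost surely
\begin{equation*}
u \in C^{\alpha_1-1/p}\bigl([0,\tau];\, H_{p}^{n-2\alpha_2}\bigr),
\qquad
\bE\,|u|^{p}_{C^{\alpha_1-1/p}([0,\tau];\,H_p^{n-2\alpha_2})}
\;\le\; N\,\|u\|_{\cH_p^{n}(\tau)}^{p},
\end{equation*}
with $N=N(\alpha_1,\alpha_2,n,d,p,T)$.

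Second, for each fixed $t\in[0,\tau]$, the condition $\alpha_2 < \tfrac12(n-d/p)$ guarantees $n-2\alpha_2-d/p\in(0,1)$, so Lemma \ref{prop_of_bessel_space}\eqref{sobolev-embedding} (with $j=0$, $\alpha=n-2\alpha_2-d/p$) yields the continuous embedding
\begin{equation*}
\|v\|_{C^{n-2\alpha_2-d/p}(\bR^d)} \le N(n,\alpha_2,d,p)\,\|v\|_{H_p^{n-2\alpha_2}},
\qquad v\in H_p^{n-2\alpha_2}.
\end{equation*}
Note that the Zygmund H\"older norm and the classical H\"older norm agree on the noninteger range $(0,1)$, which is the range we land in here. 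Applying this embedding to $u(t,\cdot)$ and to differences $u(t,\cdot)-u(s,\cdot)$ converts the time-H\"older norm in $H_p^{n-2\alpha_2}$ directly into the time-H\"older norm in $C^{n-2\alpha_2-d/p}(\bR^d)$.

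Composing the two steps and taking $p$-th moments then gives the claimed inequality \eqref{sol_embedding}; the almost sure regularity follows by noting that the right-hand side is finite since $u\in\cH_p^{n}(\tau)$. There is no real obstacle beyond bookkeeping: the main point to verify is the chain of inequalities $1/p<\alpha_1<\alpha_2<\tfrac12(n-d/p)\le 1/2$, which is assumed, so both Theorem \ref{embedding}\eqref{large-p-embedding} and the Sobolev--H\"older embedding apply simultaneously, and the implicit constants combine into a single $N(\alpha_1,\alpha_2,\kappa,d,n,p,T)$.
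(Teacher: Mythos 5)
Your proposal is correct and follows essentially the same route as the paper: the paper's proof is precisely the combination of the temporal H\"older estimate \eqref{solution_embedding} from Theorem \ref{embedding}\eqref{large-p-embedding} with the Sobolev--H\"older embedding of Lemma \ref{prop_of_bessel_space}\eqref{sobolev-embedding} applied with $n-2\alpha_2$ in place of $n$. Your additional bookkeeping (the reindexing $n\mapsto n-2$, the check that $\alpha_2<1/2$, and applying the spatial embedding to differences $u(t,\cdot)-u(s,\cdot)$) only makes explicit what the paper leaves implicit.
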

\begin{proof}

In Lemma \ref{prop_of_bessel_space} \eqref{sobolev-embedding}, consider $n-2\alpha_2$ instead of $n$. Then, we have
\begin{equation} 
\label{holder_lp_embedding}
\begin{aligned}
\|u(t,\cdot)\|_{C^{n-2\alpha_2-d/p}}&\leq N\| u(t,\cdot) \|_{H_{p}^{n-2\alpha_2}}
\end{aligned}
\end{equation}
for all $t \in [0,\tau]$ almost surely. By \eqref{holder_lp_embedding} and \eqref{solution_embedding}, we have \eqref{sol_embedding}. The corollary is proved.

\end{proof}

{We end this section with the definition of the solution to \eqref{main_equation}.}

{\begin{definition} Let $\tau\leq T$ be a bounded stopping time, $n\in\bR$, $ p \geq 2$, and Assumption \ref{assumption on f} hold. Then, we say that $u\in\cH_{p,loc}^n(\tau)$ is a solution to \eqref{main_equation} if for any $\phi \in \cS$, the equality
\begin{equation*}
\begin{aligned}
(u(t,\cdot),\phi) 
&= (u_0,\phi) + \int_0^t\left( a^{ij}(s,\cdot)u_{x^ix^j}(s,\cdot)+b^i(s,\cdot)u_{x^i}(s,\cdot)+c(s,\cdot)u(s,\cdot),\phi \right)ds \\
&\quad\quad -\int_0^t(\bar b^i(s,\cdot)u^{1+\beta}(s,\cdot),\phi)ds + \sum_{k=1}^{\infty}\int_0^t\int_{\bR^d} \xi(s,x)u^{1+\gamma}(s,x)(f\ast e_k)(x) \phi(x)dxdw^k_s
\end{aligned}
\end{equation*}
holds for all $t\in[0,\tau]$ almost surely, where $\{e_k,k\in \bN\}$ is a set functions introduced in Remark \ref{remark:representation}.

\end{definition}}

\vspace{2mm}
%%%%%%%%%%%%%%%%%%%%%%%%%%%%%%%%%%%%%%%%%%%%%%%%%%%%%%%%%%%%%%%%%%%%%%%%%%%%%%%%%%%%%

\section{Main results} 
\label{sec:main_results}

In this section, we present the assumptions on coefficients and the main results of this paper. Assumption \ref{assumption:coefficients} includes conditions on the measurability, ellipticity, and boundedness of the coefficients of \eqref{main_equation}. Theorem \ref{thm:main} establishes the existence, uniqueness, and $L_p$ regularity of the solution to \eqref{main_equation}. Furthermore, the H\"older regularity of the solution is obtained using the H\"older embedding theorem for $\cH_p^\kappa(\tau)$. Theorem \ref{uniqueness of solution in p} presents the inclusion relation of the solution spaces $\cH_p^\kappa(\tau)\subset\cH_q^\kappa(\tau)$ when $p<q$ and initial data satisfy additional conditions on summability. Finally, we combine Theorems \ref{thm:main} and \ref{uniqueness of solution in p} to derive the maximal H\"older regularity of the solution, which is given in Corollary \ref{Holder regularity of the solution}.

\begin{assumption} 
\label{assumption:coefficients}
\begin{enumerate}[(i)]

\item 
The coefficients $a = a(t,x)$, $b = b(t,x)$, $c = c(t,x)$, $\bar b = \bar b(t,x)$, and $\xi = \xi(t,x)$ are $\cP\times\cB(\bR^d)$-measurable.

\item 
There exists $K>0$ such that 
\begin{equation}
\label{ellipticity_of_leading_coefficients} 
K^{-1}|\eta|^2 \leq a^{ij}(t,x)\eta^i\eta^j \leq  K|\eta|^2 \quad \forall (\omega,t,x)\in\Omega\times[0,\infty)\times\bR^d\quad\text{and}\quad \eta = (\eta^1,\dots,\eta^d)\in\bR^d, 
\end{equation}
\begin{equation}
\label{lower bound of bar b} 
\bar b(t,x) \geq K^{-1}\quad \forall (\omega,t,x)\in\Omega\times[0,\infty)\times\bR^d, 
\end{equation}
and
\begin{equation} 
\label{boundedness_of_deterministic_coefficients} 
\sum_{i,j}\left| a^{ij}(t,\cdot) \right|_{C^{2}(\bR^d)} + \sum_{i}\left| b^i(t,\cdot) \right|_{C^{2}(\bR^d)} + |c(t,\cdot)|_{C^{2}(\bR^d)} + |\bar b(t,\cdot)|_{C^{2}(\bR^d)} + |\xi(t,\cdot)|_{L_\infty(\bR^d)} \leq K
\end{equation}
for all $(\omega,t)\in\Omega\times[0,\infty)$.
\end{enumerate}
\end{assumption}

\begin{remark}

In order to establish the global existence of a solution to \eqref{main_equation}, we need to have a uniform bound of local solutions. Therefore, we first obtain a uniform $L_p$ bound of local solutions. The lower bound of $\bar b$ is crucial in obtaining this uniform bound (see Lemma \ref{Lq bounds} and \eqref{L1_bound_calculation_2}). Intuitively, the lower bound of $\bar b$ prevents the dissipative term $u^{1+\beta}$ from vanishing, which in turn allows for the long time existence of a solution.

\end{remark}

Below we provide the main result of our paper.

\begin{theorem}
\label{thm:main}
Suppose Assumptions \ref{assumption on f} and \ref{assumption:coefficients} hold. Let $\beta,\gamma>0$ and $p\geq2$ satisfy
\begin{equation}
\label{conditions on beta and gamma}
\gamma <  \frac{\kappa(1+\beta)}{d+2}\quad\text{and}\quad p \in \left( \frac{d+2}{\kappa}, \frac{1+\beta}{\gamma} \right),
\end{equation}
where $\kappa$ is the constant introduced in Assumption \ref{assumption on f}. Then, for {nonnegative} initial data $u_0\in L_1(\Omega,\cF_0;L_1(\bR^d))\cap U_p^{\kappa}$, \eqref{main_equation} has a unique nonnegative solution $u\in\cH_{p,loc}^{\kappa}$. Furthermore, for any bounded stopping time $\tau\leq T$ and $\alpha_1$ and $\alpha_2$ satisfying 
\begin{equation}
\label{condition for alpha1 and alpha 2}
\frac{1}{p} < \alpha_1 < \alpha_2 <  \frac{1}{2}\left(\kappa-\frac{d}{p}\right),
\end{equation}
we have $u \in C^{\alpha_1-1/p}([0,\tau];C^{\kappa-2\alpha_2-d/p}(\bR^d) )$ almost surely and 
\begin{equation}
\label{holder regularity of solution}
\bE |u|^p_{C^{\alpha_1-1/p}([0,\tau];C^{\kappa-2\alpha_2-d/p}(\bR^d) )}\leq N(\alpha_1,\alpha_2,\kappa,d,p,T)\|u\|_{\cH^{\kappa}_{p} (\tau)}^p.
\end{equation}

\end{theorem}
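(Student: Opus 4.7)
The plan is to follow the strategy described by the authors in the introduction: construct approximate solutions $u_m$ by truncating the super-linear nonlinearities at level $m$, obtain uniform bounds by comparing $u_m$ with an auxiliary ``noise-only'' process $v_m$ and exploiting dissipativity, pass to the limit to extract a global solution, and finally apply the H\"older embedding Corollary \ref{embedding_corollary}. Concretely, for each $m \in \bN$ fix a smooth cut-off $\eta_m : \bR \to [0,1]$ with $\eta_m \equiv 1$ on $[-m, m]$ and $\eta_m \equiv 0$ on $\bR \setminus [-m-1, m+1]$, extend $u \mapsto u^{1+\beta}$ and $u \mapsto u^{1+\gamma}$ to vanish for $u \leq 0$, and consider
\begin{equation*}
du_m = \bigl( a^{ij}(u_m)_{x^ix^j} + b^i(u_m)_{x^i} + c\, u_m - \bar{b}\, \eta_m(u_m) u_m^{1+\beta}\bigr)\, dt + \xi\, \eta_m(u_m) u_m^{1+\gamma}\, \dot F,
\end{equation*}
with $u_m(0,\cdot) = u_0$. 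The truncated coefficients are globally Lipschitz with linear growth, so Krylov's $L_p$-theory under Assumption \ref{assumption on f} (as in Lemma \ref{cut_off_lemma}) produces a unique solution $u_m \in \cH_p^{\kappa}(T)$, nonnegative by comparison with the zero solution.

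The central step is the uniform control of $u_m$, obtained in two stages. First, testing the $u_m$-equation against an approximate identity and exploiting $\bar b \geq K^{-1}$, $u_m \geq 0$, and the mean-zero property of the martingale measure yields a uniform spacetime $L_{1+\beta}$-bound $\bE \int_0^T \int_{\bR^d} u_m^{1+\beta}\, dx\, dt \leq C$ (Lemma \ref{Lq bounds}); here the $L_1(\Omega;L_1(\bR^d))$-hypothesis on $u_0$ and the lower bound on $\bar b$ are crucial. Second, introduce an auxiliary process $v_m$ solving the SPDE obtained by dropping the dissipative reaction from the $u_m$-equation while retaining $u_m$ in the noise coefficient; a pathwise comparison gives $0 \leq u_m \leq v_m$ (Lemmas \ref{lem:well-posedness of v_m} and \ref{lem:upper bound of u_m by v_m}), so estimates on $v_m$ in $\cH_p^{\kappa}$ transfer to $u_m$. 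The regularity of $v_m$ is controlled via the multiplicative inequality (Lemma \ref{prop_of_bessel_space}\eqref{pointwise_multiplier}) and Sobolev embedding (Lemma \ref{prop_of_bessel_space}\eqref{sobolev-embedding}), and the threshold $\gamma < \kappa(1+\beta)/(d+2)$ together with $p \in (\tfrac{d+2}{\kappa}, \tfrac{1+\beta}{\gamma})$ is precisely what allows the $L_{1+\beta}$-dissipative bound to absorb $u_m^{1+\gamma}$ in the noise norm (Lemma \ref{lem:noise related solution control}), producing the uniform-in-$m$ tightness $\sup_m \bP(\sup_{t \leq T}\|u_m(t,\cdot)\|_{L_\infty} > R) \to 0$ as $R \to \infty$ (Lemma \ref{lem:uniform bound of local solutions in probability}).

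With this bound in hand, define $\tau_m := \inf\{t \leq T : \|u_m(t,\cdot)\|_{L_\infty} \geq m\} \wedge T$. On $\opar 0,\tau_m\cbrk$ the cut-off is inactive so $u_m$ solves \eqref{main_equation} there, and uniqueness in $\cH_p^{\kappa}(\tau_m)$ yields consistency $u_m = u_{m'}$ on $\opar 0,\tau_m\cbrk$ for $m' \geq m$. The tightness above forces $\tau_m \to T$ in probability; extracting an almost surely increasing subsequence and gluing produces the required solution $u \in \cH_{p,loc}^{\kappa}$, with uniqueness inherited from the truncated problems. The H\"older regularity \eqref{holder regularity of solution} then follows immediately from Corollary \ref{embedding_corollary} with $n = \kappa$, since $p > (d+2)/\kappa$ guarantees $\kappa > d/p$ and hence makes \eqref{condition for alpha1 and alpha 2} admissible. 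The main obstacle is the uniform bound itself: the super-linear noise $\xi u^{1+\gamma}$ is exactly what drives finite-time blow-up in the absence of dissipation (as in Mueller's results), and the threshold $\gamma < \kappa(1+\beta)/(d+2)$ arises from delicately matching the $L_{1+\beta}$-bound produced by the dissipativity against the $H_p^{\kappa-1}(\ell_2)$-norm of $\xi u^{1+\gamma}$ dictated by the $L_p$-regularity theory; identifying the exact admissible $p$ and closing the resulting Gronwall-type estimate for $\|u_m\|_{\cH_p^{\kappa}(\tau_m\wedge t)}$ is the crux of the whole argument.
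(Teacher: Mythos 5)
Your proposal is correct and follows essentially the same route as the paper: truncated local solutions via Krylov's $L_p$-theory, a uniform $L_{1+\beta}$ bound from the dissipativity and the weight-function/It\^o argument, domination $u_m\leq v_m$ by the noise-only auxiliary process, absorption of $\|\xi u_m^{1+\gamma}\|$ using the window $p\in(\tfrac{d+2}{\kappa},\tfrac{1+\beta}{\gamma})$, tightness of $\sup_{t,x}|u_m|$, and gluing along the exit-time stopping times, with the H\"older statement from Corollary \ref{embedding_corollary}. The only cosmetic difference is that the paper upgrades $\tau_m^m\to\infty$ from convergence in probability to almost sure convergence by monotonicity of the stopping times rather than by passing to a subsequence, and establishes nonnegativity of $u_m$ via a finite-noise approximation and the maximum principle rather than by direct comparison with zero.
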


\begin{remark}[Comparison with Salins' result \cite{salins2022global}]\label{comparison with salins' result}
	% We mention a few differences of Theorem \ref{thm:main} and \cite[Theorem 2.7]{salins2022global}. In \eqref{eq:SRDE}, \cite{salins2022global} considered $g(u)$ and $\sigma(u)$ satisfying $g(u)\text{sign}(u) \lesssim |u|^{1+\beta}$ and $\sigma(u) \lesssim 1+ |u|^{1+\gamma}$, which are somewhat different with our choice $g(u) = -\bar{b}u^{1+\beta}$ and $\sigma(u) = \xi u^{1+\gamma}$. 
\cite[Theorem 2.7]{salins2022global} identifies a condition that the solutions of \eqref{eq:SRDE} with $g(u)\text{sign}(u) \lesssim |u|^{1+\beta}$ and $\sigma(u) \lesssim 1+ |u|^{1+\gamma}$ on a bounded domain $D$ never explode when $\gamma<\frac{(1-\eta)\beta}{2}$ where $\eta<1$ is some constant related to the correlation function $f$ (see \cite[Assumption 2.2]{salins2022global}). Since $\eta$ is associated with the eigenfunction of $\cL$ on $L^2(D)$, it is difficult to compare the range of ($\beta,\gamma$) with ours ($\gamma<\frac{\kappa(1+\beta)}{d+2}$). The only case that can be identified is the ($1+1$)-dimensional space-time white noise ($f= \delta_0$) where $\gamma < \frac{\beta}{4}$ for \cite{salins2022global} and $\gamma<\frac{(1+\beta)}{6}$ for Theorem \ref{thm:main}. 
\end{remark}

\begin{remark}
\label{rmk: relation between beta and gamma}

 Note that choice of $(\beta,\gamma)$ in \eqref{conditions on beta and gamma} ensures that there exists $p\geq 2 $ such that $p \in \left( \frac{d+2}{\kappa}, \frac{1+\beta}{\gamma} \right)$. We briefly explain why this range of $p$ in \eqref{conditions on beta and gamma} is necessary for the proof of Theorem \ref{thm:main} with a sketch of the proof in the case of the one-dimensional SPDE with the space-time white noise and simple coefficients:
 \begin{equation}\label{eq:simple SPDE}
   du = u_{xx} - u^{1+\beta} + u^{1+\gamma} e_k dw_t^k,\quad t\in(0,T],\,\,x\in\bR; \quad u(0,\cdot) = u_0,
 \end{equation} where $T<\infty$, $\{e_k:k\in\bN\}$ is the orthonormal $L_2(\bR)$ basis, and $ \{w_t^k:k\in\bN\}$ is a set of one-dimensional independent Wiener process. In order to obtain the global existence of a solution to \eqref{eq:simple SPDE}, we need to have a uniform bound of a nonnegative local solutions $u_m$. For this purpose, we separate $u_m$ into two parts: the noise-dominant part $v_m$ and the dissipative-dominant part $u_m-v_m$. Let $v_m$ satisfy the following SPDE
 \begin{equation}
   \label{noise dominant equation}
dv_m = v_{mxx}  + u_m^{1+\gamma} e_k dw_t^k,\quad t\in(0,T],\,\,x\in\bR; \quad v_m(0,\cdot) = u_0.
 \end{equation} Note that the stochastic part of \eqref{noise dominant equation} is $u_m^{1+\gamma}$, not $v_m^{1+\gamma}$. Then, using Duhamel's principle we can deduce that $u_m\leq v_m$ since $u_m-v_m$ satisfies
\begin{equation*}
u_m(t,x) - v_m(t,x) = -\int_0^t\int_{\bR}p(t-s,x-y)u_m^{1+\beta}(s,y)dyds\leq 0,
\end{equation*}
where $p(t,x) = \frac{1}{(4\pi t)^{1/2}}e^{-\frac{1}{4t}|x-y|^2}$ denotes the heat kernel. Consequently, to control $\sup_{t,x}u_m(t,x)$, it suffices to control $\sup_{t,x}v_m(t,x)$. Additionally, since $v\in \cH_p^{\kappa}(\tau)$, Corollary \ref{embedding_corollary} yields that ${\bE}\sup_{t,x}|v_m(t,x)|^p$ is controlled by $\| v_m\|^p_{\cH^\kappa_p}$. Thus, we will control $\| v_m\|^p_{\cH^\kappa_p}$ in what follows. As $v_m$ is a solution to \eqref{noise dominant equation}, we get $\| v_m\|^p_{\cH^\kappa_p}\lesssim \|u_m \|^{p(1+\gamma)}_{\bL_{p(1+\gamma)}}$ (to simplify, we dropped the subscript for the stopping time). On the other hand, we have a uniform bound $\|u_m\|_{\bL_{1+\beta}}^{1+\beta}\lesssim 1$ {with the help of the conservation law} (see Lemma \ref{Lq bounds}). Therefore, if $p':=p(1+\gamma)-(1+\beta) < p$, then we can use Jensen's inequality and the fact $u_m\leq v_m$ to have 
\begin{equation*}
  \| v_m\|^p_{\cH^\kappa_p}\lesssim \|u_m \|^{p(1+\gamma)}_{\bL_{p(1+\gamma)}} \lesssim \|u_m\|_{\bL_{1+\beta}}^{1+\beta} \cdot \bE \sup_{t,x} |u_m|^{p'}\lesssim  \left( \bE \sup_{t,x}|v_m|^p \right)^{p'/p} \lesssim \| v_m \|^{p'/p}_{\cH^\kappa_p}.
\end{equation*} We now use Young's inequality to get a uniform bound of $\| v_m\|^p_{\cH^\kappa_p}$ until a suitable stopping time (see Lemma \ref{lem:noise related solution control}). Thus, $p(1+\gamma)-(1+\beta) < p$, (equivalent to $p<\frac{1+\beta}{\gamma}$ in \eqref{conditions on beta and gamma}), is required. The lower bound $p > \frac{d+2}{\kappa}$ is needed to choose $\alpha_1$ and $\alpha_2$ satisfying \eqref{condition for alpha1 and alpha 2} so that the H\"older exponents in \eqref{holder regularity of solution} are positive.

\end{remark}

By optimizing the choice of $(p,\alpha_1,\alpha_2)$ in \eqref{condition for alpha1 and alpha 2}, we can obtain the maximal H\"older regularity of the solution. However, it should be noted that the solution $u=u^{(p)}$  constructed in Theorem \ref{thm:main} depends on $p$. To obtain the maximal H\"older regularity, we let $p=p_\ep$ depend on a parameter $\ep>0$, which goes to $\frac{1+\beta}{\gamma}$ as $\ep\to 0,$ and all the conditions of Theorem \ref{thm:main} hold for all $p_\ep$.  To this end, we need to verify that $u^{(p_\ep)}$ for any $\ep>0$ coincides with each other, and $u=u^{(p_\ep)}$ is indeed a unique solution in $\cap_{\ep >0} \cH_{p_\ep,loc}^\kappa$. Thus, we need the following theorem.

\begin{theorem}
\label{uniqueness of solution in p}
Let $\beta,\gamma,p$ and $u_0$ satisfy the conditions in Theorem \ref{thm:main} and $u\in \cH_{p,loc}^\kappa$ be the solution to \eqref{main_equation}. Then, if $q > p$ satisfies 
\begin{equation*}
\frac{d+2}{\kappa} < p < q < \frac{1+ \beta}{\gamma}
\end{equation*}
and $u_0\in L_1(\Omega,\cF_0;L_1(\bR^d))\cap U_q^{\kappa}$, the solution $u$ also belongs to $\cH_{q,loc}^\kappa$.
\end{theorem}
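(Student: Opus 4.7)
The strategy is to apply Theorem \ref{thm:main} with $q$ in place of $p$ to produce a second solution $\tilde u \in \cH_{q,loc}^\kappa$ of \eqref{main_equation}, and then to identify $\tilde u$ with $u$ through the common approximating sequence. Because $\gamma<\tfrac{\kappa(1+\beta)}{d+2}$ holds by hypothesis, $q \in (\tfrac{d+2}{\kappa}, \tfrac{1+\beta}{\gamma})$ by assumption, and $u_0 \in L_1(\Omega,\cF_0;L_1(\bR^d)) \cap U_q^\kappa$, Theorem \ref{thm:main} applied at exponent $q$ delivers a unique nonnegative solution $\tilde u \in \cH_{q,loc}^\kappa$ of \eqref{main_equation} with initial datum $u_0$. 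It therefore suffices to show that $u = \tilde u$ almost everywhere; once this is settled the conclusion $u \in \cH_{q,loc}^\kappa$ is immediate.

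For the identification, I would compare the localized approximations $u_m$ underlying the two applications of Theorem \ref{thm:main}. Both constructions proceed by approximating \eqref{main_equation} by a cut-off equation in which the reaction and noise coefficients $-\bar b u^{1+\beta}$ and $\xi u^{1+\gamma}$ are truncated so as to be globally bounded. Such a truncated equation has globally Lipschitz (and globally bounded) nonlinear terms, and the standard $L_r$-theory of \cite{kry1999analytic} produces a unique solution $u_m \in \cH_r^\kappa$ simultaneously for $r=p$ and $r=q$, with $u_m$ not depending on the choice of $r$. By construction, there are stopping times $\tau_m^{(p)},\tau_m^{(q)}$ increasing to $\infty$ almost surely such that $u = u_m$ on $\opar 0,\tau_m^{(p)}\cbrk$ in $\cH_p^\kappa$ and $\tilde u = u_m$ on $\opar 0,\tau_m^{(q)}\cbrk$ in $\cH_q^\kappa$. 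On the common interval $\opar 0,\tau_m^{(p)}\wedge \tau_m^{(q)}\cbrk$ one therefore has $u = u_m = \tilde u$, and sending $m\to\infty$ yields $u = \tilde u$ almost surely on $[0,\infty)$.

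The main obstacle is justifying that the cut-off approximations are the \emph{same} function regardless of the ambient Lebesgue exponent $r$: one needs to know that the truncated fixed-point/Picard scheme (or equivalently, the Duhamel representation) produces a unique object determined only by the data $(a^{ij},b^i,c,\bar b,\xi,u_0,F)$ and the truncation level $m$, not by the solution space in which the contraction is carried out. This is a routine consistency check, but it is precisely the nontrivial step; it can be handled either by running the contraction in $\cH_p^\kappa \cap \cH_q^\kappa$ (for which $u_0 \in U_p^\kappa \cap U_q^\kappa$ is needed, hence the hypothesis on $u_0$), or by a direct pathwise uniqueness argument exploiting the boundedness of the truncated nonlinear terms. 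Once this is in place, the remainder of the argument is a straightforward patching along the localizing stopping times.
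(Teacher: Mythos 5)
Your proposal takes a genuinely different route from the paper, and its crucial step is asserted rather than proved. The paper does not construct a second solution at exponent $q$ at all. Instead it bootstraps the regularity of the already-given $u\in\cH_{p,loc}^{\kappa}$: using the H\"older embedding it localizes to stopping times $\tau_l$ on which $\sup_{t,x}|u|\leq l$, observes that $\int|u|^q = \int |u|^{p}|u|^{q-p}\leq l^{q-p}\int|u|^p<\infty$ (this is where $q>p$ enters), hence $u\in\bL_q(\tau_{l,l_0})$ after a further localization; it then checks that the \emph{frozen} nonlinearities $\bar b u^{1+\beta}$ and $\xi u^{1+\gamma}(f\ast\bm{e})$ lie in $\bH_q^{\kappa-2}$ and $\bH_q^{\kappa-1}(\ell_2)$ respectively, solves the resulting \emph{linear} equation in $\cH_q^{\kappa}(\tau_{l,l_0})$ by Theorem \ref{theorem_nonlinear_case}, and identifies that solution with $u$ via uniqueness for the deterministic parabolic equation satisfied by the difference. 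This argument never revisits the cut-off approximations $u_m$ and needs no comparison between the $L_p$- and $L_q$-theories of the semilinear equation.

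The gap in your version is precisely the step you defer: the claim that the cut-off equation \eqref{eq:cut_off_equation} has a solution "not depending on the choice of $r$." The uniqueness statement of Theorem \ref{theorem_nonlinear_case} holds within a fixed $\cH_r^{\kappa}(\tau)$, so it does not directly compare $u_m^{(p)}\in\cH_p^{\kappa}$ with $u_m^{(q)}\in\cH_q^{\kappa}$; and a direct comparison by a Gronwall argument runs into the problem that on $\bR^d$ neither $L_p\subset L_q$ nor $L_q\cap L_\infty\subset L_p$ (for $p<q$), so one cannot simply estimate the difference in a single Lebesgue norm. The two honest ways to close this are (i) to rerun the Picard contraction in the intersection space $\cH_p^{\kappa}(\tau)\cap\cH_q^{\kappa}(\tau)$ (the Lipschitz estimates \eqref{estimate_for_cut_off_equation} do hold for both exponents, and $u_0\in U_p^{\kappa}\cap U_q^{\kappa}$, so this is feasible, but it also has to be threaded through the approximation $u_m^n\to u_m$ used in Lemma \ref{cut_off_lemma} to get nonnegativity), or (ii) to first show $u_m^{(p)}\in\cH_q^{\kappa}$ and then invoke $\cH_q$-uniqueness --- but (ii) is exactly the bootstrap that constitutes the paper's proof, so nothing is saved. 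In short: the skeleton of your argument can be made to work via (i), but the "routine consistency check" is the entire content of the theorem, and as written the proof is incomplete. What the paper's route buys is that it avoids this consistency question entirely by working with frozen nonlinearities and linear theory; what your route would buy, if completed, is a cleaner conceptual statement that the whole construction of Theorem \ref{thm:main} is independent of the exponent.
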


\begin{corollary}
\label{Holder regularity of the solution}
Suppose {$u_0\in L_1(\Omega,\cF_0;L_1(\bR^d))\cap \left( \bigcap_{p\in\left( \frac{d+2}{\kappa},\frac{1+ \beta}{\gamma} \right)}U_p^{\kappa} \right)$}. Then, {we have \eqref{eq:Holder regularity of the solution}}. In other words, for any $T\in(0,\infty)$ and all sufficiently small $\ep>0$, almost suerly
\begin{equation*}
\sup_{t\leq T}|u(t,\cdot)|_{C^{\kappa-\frac{\gamma}{1+\beta}(d+2)-\ep}(\bR^d)}+\sup_{x\in\bR^d}|u(t,\cdot)|_{C^{\frac{1}{2}\left[ \kappa-\frac{\gamma}{1+\beta}(d+2) \right]-\ep}([0,T])}<\infty. 
\end{equation*}

\end{corollary}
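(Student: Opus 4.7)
The plan is to combine Theorem \ref{uniqueness of solution in p} with Corollary \ref{embedding_corollary}, applying the latter twice with two different parameter choices: one tuned for the maximal spatial H\"older exponent and the other for the maximal temporal one. The sum structure of the bound in \eqref{eq:Holder regularity of the solution} makes this decomposition legitimate, as it suffices to show each summand is a.s.\ finite. First, starting from the solution $u\in\cH_{p_0,loc}^\kappa$ guaranteed by Theorem \ref{thm:main} for some fixed $p_0\in(\frac{d+2}{\kappa},\frac{1+\beta}{\gamma})$, repeated application of Theorem \ref{uniqueness of solution in p} (using that $u_0\in U_p^\kappa$ for every admissible $p$) upgrades $u$ to a common element of $\bigcap_{p\in(\frac{d+2}{\kappa},\,(1+\beta)/\gamma)}\cH_{p,loc}^\kappa$.

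For the spatial H\"older bound with exponent $\kappa-\frac{\gamma(d+2)}{1+\beta}-\ep$, I would choose $p$ slightly below $(1+\beta)/\gamma$ and set $\alpha_2$ so that $\kappa-2\alpha_2-d/p$ coincides with the target exponent. Since the standing hypothesis $\gamma<\kappa(1+\beta)/(d+2)$ yields $\gamma(d+2)/(1+\beta)<\kappa$, a short computation shows that for $p$ close enough to $(1+\beta)/\gamma$ and $\ep$ small, this $\alpha_2$ lies in $(1/p,\,\tfrac{1}{2}(\kappa-d/p))$; I then pick any $\alpha_1\in(1/p,\alpha_2)$. Localizing with stopping times $\tau_n\uparrow\infty$ for which $u\in\cH_p^\kappa(\tau_n\wedge T)$, Corollary \ref{embedding_corollary} produces
\[
u\in C^{\alpha_1-1/p}\bigl([0,\tau_n\wedge T];\,C^{\kappa-\gamma(d+2)/(1+\beta)-\ep}(\bR^d)\bigr)\quad\text{a.s.},
\]
and passing $n\to\infty$ delivers the spatial summand in the claim.

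For the temporal bound with exponent $\frac{1}{2}\bigl[\kappa-\frac{\gamma(d+2)}{1+\beta}\bigr]-\ep$, I would instead choose $\alpha_1$ and $\alpha_2$ both close to the upper endpoint $\frac{1}{2}(\kappa-d/p)$. The corresponding time exponent $\alpha_1-1/p$ approaches $\frac{1}{2}(\kappa-(d+2)/p)$, which in turn converges to $\frac{1}{2}[\kappa-\gamma(d+2)/(1+\beta)]$ as $p\uparrow(1+\beta)/\gamma$; thus the target time exponent (up to the $\ep$ loss) is achieved by first choosing $p$ close to $(1+\beta)/\gamma$ and then $\alpha_1$ close to $\frac{1}{2}(\kappa-d/p)$. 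Corollary \ref{embedding_corollary} then yields an embedding into $C^{\frac{1}{2}[\kappa-\gamma(d+2)/(1+\beta)]-\ep}([0,\tau_n\wedge T];\,C^\eta(\bR^d))$ for some small residual $\eta>0$; the continuous embedding $C^\eta(\bR^d)\hookrightarrow C(\bR^d)$ then delivers the uniform-in-$x$ temporal H\"older bound.

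The main technical point is not any single estimate but the joint feasibility of the parameter triples: one has to verify simultaneously that $p\in(\frac{d+2}{\kappa},(1+\beta)/\gamma)$ and that $1/p<\alpha_1<\alpha_2<\frac{1}{2}(\kappa-d/p)$ for each of the two target choices. Both feasibility checks reduce, for the spatial as well as for the temporal target, to the strict inequality $\gamma(d+2)<\kappa(1+\beta)$ from Theorem \ref{thm:main}, with the positive $\ep$ providing the strict slack needed to keep the parameters inside the open admissible region.
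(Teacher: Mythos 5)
Your proposal is correct and follows essentially the same route as the paper: combine Theorem \ref{uniqueness of solution in p} to place the solution in $\cH_{p,loc}^{\kappa}$ for $p$ arbitrarily close to $(1+\beta)/\gamma$, then apply the embedding of Corollary \ref{embedding_corollary} twice, once with $\alpha_1,\alpha_2$ near $1/p$ for the spatial exponent and once with $\alpha_1,\alpha_2$ near $\tfrac{1}{2}(\kappa-d/p)$ for the temporal exponent, the feasibility in both cases reducing to $\gamma(d+2)<\kappa(1+\beta)$. The paper merely makes the explicit choice $p_\ep=\frac{2(d+2)(1+\beta)}{2(d+2)\gamma+(1+\beta)\ep}$ with $\alpha_1=\frac{1}{p_\ep}+\frac{\ep}{8}$, $\alpha_2=\frac{1}{p_\ep}+\frac{\ep}{4}$ (resp.\ $\alpha_i$ near $\tfrac12(\kappa-d/p_{2\ep})$), which is exactly the parameter tuning you describe qualitatively.
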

\begin{proof} 
Let $\ep>0$ be a small but arbitrary constant that satisfies $\frac{1}{2}\left[ \kappa-\frac{\gamma}{1+\beta}(d+2) \right] > \ep$ and set $p_\ep = \frac{2(d+2)(1+\beta)}{2(d+2)\gamma+(1+\beta)\ep}$. Then, there exists a unique solution $u\in\cap_{\ep>0}\cH_{p_\ep,loc}^{\kappa}$ to \eqref{main_equation} due to Theorem \ref{thm:main} and Theorem \ref{uniqueness of solution in p}. 
Now choose $\alpha_1 = \frac{1}{p_{\ep}}+\frac{\ep}{8}$ and $\alpha_2 = \frac{1}{p_{\ep}} + \frac{\ep}{4}$. Then, by Theorem \ref{thm:main}, almost surely
\begin{equation*}
\sup_{t\in[0,T]}|u(t,\cdot)|_{C^{\kappa-\frac{\gamma}{1+\beta}(d+2)-\ep}}\leq |u|_{C^{\ep/8}([0,\tau];C^{\kappa-\frac{\gamma}{1+\beta}(d+2)-\ep}(\bR^d) )} < \infty.
\end{equation*}
In order to obtain the regularity in time, consider $p_{2\ep}$ instead of $p_\ep$ and choose $\alpha_1 = \frac{1}{2}\left( \kappa - \frac{d}{p_{2\ep}} \right)-\frac{\ep}{2}$ and $\alpha_2 = \frac{1}{2}\left( \kappa - \frac{d}{p_{2\ep}} \right)-\frac{\ep}{4}$. Then, Theorem \ref{thm:main} yields that almost surely
\begin{equation*}
\sup_{x\in\bR^d}|u(\cdot,x)|_{C^{\frac{1}{2}\left[ \kappa-(d+2)\frac{\gamma}{1+\beta} \right]-\ep}}\leq |u|_{C^{\ep/2}([0,\tau];C^{\frac{1}{2}\left[ \kappa-(d+2)\frac{\gamma}{1+\beta} \right]-\ep}(\bR^d) )} < \infty.
\end{equation*}
The corollary is proved.

\end{proof}

\vspace{2mm}
%%%%%%%%%%%%%%%%%%%%%%%%%%%%%%%%%%%%%%%%%%%%%%%%%%%%%%%%%%%%%%%%%%%%%%%%%%%%%%%%%%%%%

\section{Proof of Theorem \ref{thm:main}}\label{sec:proof of main thm}

This section devoted to proving Theorem \ref{thm:main}. The main challenge is to construct a global solution to \eqref{main_equation} as the the uniqueness and regularity of the global solution follow from those of local solutions. In Section \ref{subsec:Lp solvability of SPDE}, we establish the existence, uniqueness, and regularity of local solutions $u_m$ on some time interval $[0,\tau_m]$. The properties of local solutions follow from the $L_p$ theory for semilinear SPDEs, as described in \cite[Theorem 5.1]{kry1999analytic}. In order to construct a global solution, we need a uniform bound of local solutions $u_m$. In Section \ref{subsec:uniform bound on local solutions}, we decompose the local solution $u_m$ into two components: the noise-dominant part $v_m$ and the dissipative-dominant part $u_m - v_m$. The properties of noise-dominant part $v_m$ are suggested in Lemma \ref{lem:well-posedness of v_m}. As illustrated in Remark \ref{rmk: relation between beta and gamma}, we show that the local solution $u_m$ is bounded by noise-dominant part $v_m$; i.e., the dissipativity-dominant part $u_m - v_m$ is non-positive (see Lemma \ref{lem:upper bound of u_m by v_m}).  Then, $\sup_{t,x}|u_m(t,x)|$ is bounded by $\|v_m\|_{\cH_p^{\kappa}(\tau)}$ because $\sup_{t,x}|v_m(t,x)|$ is bounded by $\|v_m\|_{\cH_p^{\kappa}(\tau)}$ (see Lemma \ref{lem:well-posedness of v_m}). Additionally, observe that $\|v_m\|_{\cH_p^{\kappa}(\tau)}$ is controlled by $\|u_m\|_{\bL_{p(1+\gamma)}(\tau)}$ (see \eqref{eq:bound of vm}) and there is a uniform bound of $\|u_m\|_{\bL_{1+\beta}(\tau)}$ {by It\^o's formula} (see Lemma \ref{Lq bounds}). In conclusion, the noise-dominant part $v_m$ can be controlled by the dissipativity term (see Lemma \ref{lem:noise related solution control}). With this bound, we prove that the candidate of a global solution ($u(t,\cdot)=u_m(t,\cdot)$ for $t\in[0,\tau_m]$) does not explode in finite time (see Lemma \ref{lem:uniform bound of local solutions in probability} and Section \ref{subsec:proof of main thm}). In Section \ref{subsec:uniqueness of solution in p} we prove Theorem \ref{uniqueness of solution in p}. 

\vspace{0.3cm}

\subsection{Existence, uniqueness, and regularity of local solutions}\label{subsec:Lp solvability of SPDE}

Let $\tau\leq T$ be a bounded stopping time and $n\in (-2,-1)$. We review the $L_p$-solvability of the SPDE
\begin{equation} \label{nonlinear_equation}
du = \left(Lu + g(u)\right) dt + \sigma^k(u) dw_t^k,\quad 0< t \leq \tau;\quad u(0,\cdot) = u_0,
\end{equation}
where $Lu(t,x) := a^{ij}(t,x)u_{x^ix^j}(t,x) + b^i(t,x) u_{x^i}(t,x) + c(t,x)u(t,x)$ and $w_t^k$ is a one-dimensional Brownian motion. We present some assumptions on $g$ and $\sigma$.

\begin{assumption}[$\tau$] \label{assumption:f_and_g}
\begin{enumerate}[(i)]

\item The functions $g(t,x,u)$ and $\sigma^k(t,x,u)$ are $\cP\times\cB(\bR^d)\times\cB(\bR)$-measurable satisfying $g(t,x,0)\in \bH_{p}^{n}(\tau)$ and $\sigma(t,x,0) = \left( \sigma^1(t,x,0),\sigma^2(t,x,0),\dots \right) \in \bH_{p}^{n+1}(\tau,\ell_2).$

\item
For any $\ep>0$, there exists a constant $N_\ep$ such that
\begin{equation}
\label{conditions_on_f_and_g}
\| g(u) - g(v) \|_{\bH_{p}^n(\tau)} + \| \sigma(u) - \sigma(v) \|_{\bH_{p}^{n+1}(\tau,\ell_2)} \leq \ep\| u-v \|_{\bH_p^{n+2}(\tau)} + N_\ep \| u-v \|_{\bH_p^{n+1}(\tau)}
\end{equation}
for any $u,v\in \bH_p^{n+2}(\tau)$.

\end{enumerate}
\end{assumption}

\begin{theorem}
\label{theorem_nonlinear_case}
Suppose Assumptions \ref{assumption:coefficients} and \ref{assumption:f_and_g} ($\tau$) hold. Then, for any $p\geq2$ and $u_0\in U_p^{n+2}$, \eqref{nonlinear_equation} admits a unique solution $u\in\cH_p^{n+2}(\tau)$ such that
\begin{equation} 
\label{nonlinear_estimate}
\| u \|_{\cH_p^{n+2}(\tau)} \leq N\left(\|g(0)\|_{\bH_p^{n}(\tau)} + \|\sigma(0)\|_{\bH_p^{n+1}(\tau,\ell_2)} + \| u_0 \|_{U_p^{n+2}}\right),
\end{equation}
where $N = N(n,d,p,K,T,N_\ep)$. 
\end{theorem}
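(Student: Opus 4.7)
The plan is to reduce Theorem~\ref{theorem_nonlinear_case} to the linear $L_p$-theory for SPDEs with random coefficients, namely the case where $g(t,x,u)\equiv h(t,x)$ and $\sigma^k(t,x,u)\equiv f^k(t,x)$ are independent of $u$. That linear result is the main black-box output of \cite{kry1999analytic}: one has unique solvability in $\cH_p^{n+2}(\tau)$ with an estimate of the form
\begin{equation*}
\|u\|_{\cH_p^{n+2}(\tau\wedge t)}\leq N_0\bigl(\|h\|_{\bH_p^n(\tau\wedge t)}+\|f\|_{\bH_p^{n+1}(\tau\wedge t,\ell_2)}+\|u_0\|_{U_p^{n+2}}\bigr),\qquad t\in(0,T],
\end{equation*}
valid on every sub-stopping-time window, with $N_0=N_0(n,d,p,K,T)$ depending only on the linear part $L$. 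I would first derive the a priori estimate \eqref{nonlinear_estimate} from this and Assumption~\ref{assumption:f_and_g}; uniqueness will then come for free by applying the same scheme to the difference of two solutions, and existence will follow by the method of continuity.

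For the a priori step, I regard \eqref{nonlinear_equation} as a linear SPDE with source $h:=g(u)$ and diffusion $f^k:=\sigma^k(u)$, and apply the linear estimate above. Splitting $g(u)=(g(u)-g(0))+g(0)$ and similarly for $\sigma$, and invoking Assumption~\ref{assumption:f_and_g} with $\ep>0$ chosen so that $N_0\ep\leq 1/4$, one arrives at
\begin{equation*}
\|u\|_{\cH_p^{n+2}(\tau\wedge t)}\leq \tfrac{1}{2}\|u\|_{\bH_p^{n+2}(\tau\wedge t)}+N_1\bigl(\|u\|_{\bH_p^{n+1}(\tau\wedge t)}+\|g(0)\|_{\bH_p^n(\tau)}+\|\sigma(0)\|_{\bH_p^{n+1}(\tau,\ell_2)}+\|u_0\|_{U_p^{n+2}}\bigr).
\end{equation*}
Since $\|u\|_{\bH_p^{n+2}(\tau\wedge t)}\leq\|u\|_{\cH_p^{n+2}(\tau\wedge t)}$ by the very definition of the solution-space norm \eqref{def_of_sol_norm}, the first term on the right can be absorbed into the left, and after raising to the $p$-th power I get
\begin{equation*}
\|u\|_{\cH_p^{n+2}(\tau\wedge t)}^p\leq C_0+C_1\|u\|_{\bH_p^{n+1}(\tau\wedge t)}^p,
\end{equation*}
with $C_0$ a constant multiple of the $p$-th power of the right-hand side of \eqref{nonlinear_estimate}. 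Because $n+1=(n-1)+2$ and $n-1<n$, Theorem~\ref{embedding}\eqref{gronwall_type_ineq} (applied with $n':=n-1$) converts this into \eqref{nonlinear_estimate} on the full interval $[0,\tau]$. Running the identical scheme on $w:=u_1-u_2$, with $g(u_1)-g(u_2)$, $\sigma(u_1)-\sigma(u_2)$ in place of $g(u)$, $\sigma(u)$, annihilates $C_0$ and forces $w\equiv 0$ in $\cH_p^{n+2}(\tau)$, yielding uniqueness.

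For existence I would use the method of continuity along the one-parameter family $(g_\lambda,\sigma_\lambda):=(\lambda g+(1-\lambda)g(0),\,\lambda\sigma+(1-\lambda)\sigma(0))$ for $\lambda\in[0,1]$. By convexity this family satisfies Assumption~\ref{assumption:f_and_g} with the same constants $(\ep,N_\ep)$ uniformly in $\lambda$ and with $g_\lambda(0)=g(0)$, $\sigma_\lambda(0)=\sigma(0)$; the endpoint $\lambda=0$ is the purely linear equation, solvable by the black-box theory. Openness of the solvability set in $[0,1]$ comes from a Banach fixed-point argument applied to $v\mapsto$ (linear solution of the equation with source $g_\lambda(v)$ and diffusion $\sigma_\lambda(v)$) in $\cH_p^{n+2}(\tau)$ for $\lambda$ in a small neighborhood of an already solvable $\lambda_0$; closedness comes from the uniform a priori estimate just established. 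Hence the full interval $[0,1]$ is solvable, giving a solution at $\lambda=1$. The chief technical difficulty will be the calibration in the a priori step: $\ep$ has to be chosen against the linear constant $N_0$ twice (once for the drift, once for the noise), and the residual low-order term must be shaped precisely to fit the hypotheses of Theorem~\ref{embedding}\eqref{gronwall_type_ineq}; everything else is a fairly mechanical application of the $L_p$-theory machinery laid out in the preliminaries.
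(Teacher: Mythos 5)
Your strategy is genuinely different from the paper's. The paper takes the full \emph{nonlinear} well-posedness at deterministic terminal time as its black box (Theorem 5.1 of \cite{kry1999analytic} already covers $\tau=T$), so the only content of its proof is the passage from $T$ to a bounded stopping time $\tau<T$: existence via the truncation $\bar g=1_{t\leq\tau}g$, $\bar\sigma=1_{t\leq\tau}\sigma$, and uniqueness by freezing the nonlinearity at a given solution $v$, solving the resulting equation on $[0,T]$, and identifying the two on $\opar0,\tau\cbrk$ through a pathwise deterministic parabolic argument. You instead rebuild the nonlinear theorem from the linear theory. Your a priori estimate and uniqueness steps are sound: the splitting $g(u)=(g(u)-g(0))+g(0)$, the absorption using $\|u\|_{\bH_p^{n+2}}\leq\|u\|_{\cH_p^{n+2}}$, and the application of Theorem \ref{embedding} \eqref{gronwall_type_ineq} with $n'=n-1$ all work (the restriction of Assumption \ref{assumption:f_and_g} from $\tau$ to $\tau\wedge t$ is obtained by applying it to $u1_{s\leq t}$, $v1_{s\leq t}$). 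This is essentially how Krylov proves the theorem you could have cited. Be aware, though, that you thereby push the stopping-time issue --- the actual content of the paper's proof --- into your ``black-box'' linear solvability on $\opar0,\tau\cbrk$, which itself requires the same truncation argument the paper carries out.

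The genuine gap is in your existence step. The map $v\mapsto T_\lambda v$ (linear solution with source $g_\lambda(v)$ and diffusion $\sigma_\lambda(v)$, same initial data) satisfies $\|T_\lambda v_1-T_\lambda v_2\|_{\cH_p^{n+2}(\tau)}\leq N_0\ep\|v_1-v_2\|_{\bH_p^{n+2}(\tau)}+N_0N_\ep\|v_1-v_2\|_{\bH_p^{n+1}(\tau)}$; the first term is small by the choice of $\ep$, but the second is not, and nothing improves as $\lambda\to\lambda_0$: the map does not involve $\lambda_0$ at all, so its (non-)contractivity is independent of $|\lambda-\lambda_0|$ and ``openness'' as you describe it fails. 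The standard repair is to control the lower-order term by the length of the time interval, using \eqref{solution_embedding} to get $\|v_1-v_2\|^p_{\bH_p^{n+1}(\tau\wedge t)}\leq N t\,\|v_1-v_2\|^p_{\cH_p^{n+2}(\tau\wedge t)}$, obtain a contraction on a short interval, and iterate finitely many times up to $T$ --- at which point the method of continuity is superfluous and one contracts directly at $\lambda=1$. With that repair your route closes; as written, the existence argument does not.
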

\begin{proof}
Since \cite[Theorem 5.1]{kry1999analytic} covers the case $\tau = T$, we only prove the case $\tau < T$. 

\textit{(Step 1). (Existence)}
 Set
$$
\bar{g}(t, u) := 1_{t\leq \tau} g(t,u)\quad\text{and}\quad \bar{\sigma}(t, u) := 1_{t\leq \tau} \sigma(t,u).
$$
Additionally, $\bar{g}(u)$ and $\bar{\sigma}(u)$ satisfy Assumption \ref{assumption:f_and_g} ($T$). Then, by \cite[Theorem 5.1]{kry1999analytic}, there exists a unique solution $u\in \cH_{p}^{n+2}(T)$ such that $u$ satisfies \eqref{nonlinear_equation} with $\bar g$ and $\bar{\sigma}$, instead of $g$ and $\sigma$, respectively. As $\tau\leq T$, we have $u\in\cH_{p}^{n+2}(\tau)$ and $u$ satisfies \eqref{nonlinear_equation} and the estimate \eqref{nonlinear_estimate} with $g$ and $\sigma$.

\textit{(Step 2). (Uniqueness)}
Let $u,v\in \cH_{p}^{n+2}(\tau)$ be two solutions of \eqref{nonlinear_equation}. Then, \cite[Theorem 5.1]{kry1999analytic} yields there exists a unique solution $\bar{v}\in \cH_{p}^{n+2}(T)$ satisfying
\begin{equation}
\label{equation_in_proof_of_uniqueness}
d \bar{v} = \left( a^{ij}\bar{v}_{x^ix^j} + b^{i}\bar{v}_{x^i} + c\bar{v} + \bar{g}(v) \right)dt  + \bar{\sigma}^k(v)  dw^k_t, \quad 0<t\leq T\, ; \quad \bar{v}(0,\cdot)=u_0.
\end{equation}
Notice that in \eqref{equation_in_proof_of_uniqueness}, $\bar{g}(v)$ and $\bar{\sigma}(v)$ are used instead of $\bar{g}(\bar{v})$ and $\bar{\sigma}(\bar{v})$, respectively.  Set  $\tilde v:=v-\bar{v}$. Then, for fixed $\omega\in\Omega$, we have
$$
d \tilde v = \left( a^{ij}\tilde{v}_{x^ix^j} + b^{i}\tilde{v}_{x^i} + c\tilde{v} \right) dt, \quad 0 < t \leq \tau\, ; \quad \tilde v(0,\cdot)=0.
$$
By the deterministic version of \cite[Theorem 5.1]{kry1999analytic}, we have $\tilde v = 0$ in $L_p((0,\tau]\times\bR^d)$ almost surely. Additionally, it implies $v(t,\cdot) = \bar{v}(t,\cdot)$ in $L_p((0,\tau]\times\bR^d)$ almost surely. Thus, in \eqref{equation_in_proof_of_uniqueness}, we can replace $\bar g(v)$ and $\bar{\sigma}(v)$ with $\bar g (\bar v)$ and $\bar{\sigma}(\bar{v})$. Therefore, $\bar v\in \cH_p^{n+2}(T)$ satisfies  \eqref{nonlinear_equation} on $(0,T]$ with $\bar g, \bar \sigma$ instead of $g,\sigma$, respectively. Similarly, by following word for word, there exists $\bar{u}\in \cH_p^{n+2}(T)$ such that $\bar{u}$ satisfies  \eqref{nonlinear_equation} on $(0,T]$ with $\bar g$ and $\bar \sigma$ instead of $g$ and $\sigma$.
Thus, by the uniqueness result in $\cH_{p}^{n+2}(T)$, we have $\bar{u} = \bar{v}$ in $\cH_{p}^{n+2}(T)$, which implies $u = v$ in $\cH_p^{n+2}(\tau)$. Thus, the lemma is proved.

\end{proof}

Subsequently, we proceed to establish the existence, uniqueness, and regularity of a local solution for equation \eqref{main_equation}. Take a nonnegative function $h(z)\in C^1(\bR)$ such that $h(z) = 1$ on $|z|\leq1$ and $h(z) = 0$ on $|z|\geq2$. For $m\in\bN$, set 
\begin{equation}
\label{def of hm}
h_m(z) := h(z/m),
\end{equation} and recall $Lu(t,x) := a^{ij}(t,x)u_{x^ix^j}(t,x) + b^i(t,x) u_{x^i}(t,x) + c(t,x)u(t,x)$.

\begin{lemma}  
\label{cut_off_lemma}
Let $\beta,\gamma\in(0,\infty)$, $T\in(0,\infty)$, $m\in \bN$, and $p\geq2$. Suppose Assumption \ref{assumption:coefficients} holds. Then, for a bounded stopping time $\tau\leq T$ and nonnegative initial data $u_0\in U_{p}^{\kappa}$, there exists a unique nonnegative solution $u_m \in \cH_p^{\kappa}(\tau)$ satisfying equation 
\begin{equation} 
\label{eq:cut_off_equation}
du_m = \left(Lu_m - \bar{b}u_{m+}^{1+\beta} h_m(u_m)  \right) dt + \xi u^{1+\gamma}_{m+} h_m(u_m) (f\ast e_k) dw_t^k,\quad 0 < t \leq \tau; \quad u_m(0,\cdot) = u_0(\cdot),
\end{equation}
where $e_k$ and $h_m$ are introduced in Remark \ref{remark:representation} and \eqref{def of hm}, respectively. Furthermore, for $\alpha_1,\alpha_2\in(0,\infty)$ and $p\geq2$ satisfying \eqref{condition_for_alpha_beta}, we have
$u_m \in C^{\alpha_1-1/p}([0,\tau];C^{\kappa-2\alpha_2-1/p}(\bR^d) )$ almost surely and 
\begin{equation*} 
\mathbb{E} \|u_m\|^p_{C^{\alpha_1-1/p}([0,\tau];C^{\kappa-2\alpha_2-1/p}(\bR^d) )}\leq N(\alpha_1,\alpha_2,\kappa,p,T)\|u_m\|_{\cH^{\kappa}_{p} (\tau)}^p.
\end{equation*}
\end{lemma}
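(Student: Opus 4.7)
The plan is to cast equation \eqref{eq:cut_off_equation} into the form \eqref{nonlinear_equation} with $n = \kappa - 2$ (so that $n+2 = \kappa$) and apply Theorem \ref{theorem_nonlinear_case}, then upgrade the resulting $\cH_p^{\kappa}(\tau)$-solution to be nonnegative and read off the H\"older regularity from Corollary \ref{embedding_corollary}. Concretely, set
\[
g(t,x,u) := -\bar b(t,x)\, u_+^{1+\beta}\, h_m(u), \qquad \sigma^k(t,x,u) := \xi(t,x)\, u_+^{1+\gamma}\, h_m(u)\, (f\ast e_k)(x).
\]
Both maps are $\cP\times\cB(\bR^d)\times\cB(\bR)$-measurable by Assumption \ref{assumption:coefficients} and the smoothness of $h_m$, and they satisfy $g(\cdot,0)\equiv 0$, $\sigma(\cdot,0)\equiv 0$, so the baseline requirement of Assumption \ref{assumption:f_and_g}($\tau$) is automatic.

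The key step is verifying the Lipschitz-type bound \eqref{conditions_on_f_and_g}. Since $h_m$ is supported on $|u|\leq 2m$ and is $C^1$, the scalar maps $u\mapsto u_+^{1+\beta}h_m(u)$ and $u\mapsto u_+^{1+\gamma}h_m(u)$ are globally bounded and Lipschitz with a constant $N_m$ depending only on $m,\beta,\gamma$ and $|h|_{C^1}$. Combined with \eqref{boundedness_of_deterministic_coefficients} this yields the pointwise estimate $|g(u)-g(v)|\leq N_m|u-v|$, so by Lemma \ref{prop_of_bessel_space}\eqref{norm_bounded} (applicable since $\kappa-2\leq 0$),
\[
\|g(u)-g(v)\|_{H_p^{\kappa-2}}\leq \|g(u)-g(v)\|_{L_p}\leq N_m\|u-v\|_{L_p}.
\]
For the noise, I would use the kernel representation of Remark \ref{Kernel} for $(1-\Delta)^{(\kappa-1)/2}$ together with the reproducing-type identity $\sum_k (f\ast e_k)(y)(f\ast e_k)(z) = f(y-z)$ for the ONB $\{e_k\}$ of $\cH$, apply Minkowski's integral inequality, invoke the convolution identity $R_{1-\kappa}\ast R_{1-\kappa}=R_{2(1-\kappa)}$ from Remark \ref{Kernel}, and use the finiteness of $\nu_\kappa$ in \eqref{def of nu kappa} to obtain
\[
\|\sigma(u)-\sigma(v)\|_{H_p^{\kappa-1}(\ell_2)}\leq N_m\sqrt{\nu_\kappa}\,\|u-v\|_{L_p}.
\]
These $L_p$-bounds are converted into the form demanded by \eqref{conditions_on_f_and_g} through the interpolation in Lemma \ref{prop_of_bessel_space}\eqref{multi_ineq} between $H_p^{\kappa}$ and $H_p^{\kappa-1}$ with exponent $1-\kappa$, followed by Young's inequality. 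Theorem \ref{theorem_nonlinear_case} then produces a unique $u_m\in\cH_p^{\kappa}(\tau)$ solving \eqref{eq:cut_off_equation}, while Corollary \ref{embedding_corollary} with $n=\kappa$ delivers the claimed H\"older bound.

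The remaining point is nonnegativity. Here I would approximate $z\mapsto(z_-)^2$ by a sequence of convex $C^2$ functions $\psi_\varepsilon$ with $\psi_\varepsilon',\psi_\varepsilon''$ uniformly bounded and vanishing on $\{z\geq 0\}$, and apply the It\^o formula (valid for $u_m\in\cH_p^\kappa(\tau)$ after a mollification in $x$) to $\int_{\bR^d}\psi_\varepsilon(u_m(t,x))\phi(x)\,dx$ against nonnegative cutoffs $\phi$. The algebraic miracle is that both nonlinearities carry the factor $u_+$ and hence vanish on $\{u<0\}$; after letting $\varepsilon\downarrow 0$ the surviving inequality is a linear parabolic estimate for $(u_m)_-$ with zero initial data, which forces $(u_m)_-\equiv 0$ almost surely by Gr\"onwall's inequality. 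The main obstacle will be the bookkeeping for the noise term in the negative Sobolev space $H_p^{\kappa-1}(\ell_2)$: one has to track the kernel $R_{1-\kappa}$ through an $\ell_2$-sum over infinitely many noises while preserving the reproducing structure of $f$, and only once this is in place does Krylov's $L_p$-machinery apply off the shelf.
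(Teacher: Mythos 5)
Your treatment of existence, uniqueness, and the H\"older bound is essentially the paper's own argument: the same cut-off Lipschitz estimate for $u\mapsto u_+^{1+\lambda}h_m(u)$, the same kernel computations with $R_{2-\kappa}$ for the drift and $R_{1-\kappa}$ together with the reproducing structure of $f$ (yielding the constant $\nu_\kappa$) for the noise, the same interpolation/Young step to reach Assumption \ref{assumption:f_and_g}, and then Theorem \ref{theorem_nonlinear_case} plus Corollary \ref{embedding_corollary}. That part is fine.

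Where you diverge is the nonnegativity step, and this is where your sketch has a real gap. You propose applying It\^o's formula to $\int\psi_\varepsilon(u_m)\phi\,dx$ directly for the solution $u_m\in\cH_p^{\kappa}(\tau)$. But $\kappa\leq 1$, so $u_m$ lives only in $H_p^{\kappa}$, the drift lives in $H_p^{\kappa-2}$ and the stochastic integrand in $H_p^{\kappa-1}(\ell_2)$ --- both negative-order spaces --- and the noise is an infinite sum over $k$. Justifying a pointwise It\^o formula at this regularity, even after mollification in $x$, and then controlling the quadratic-variation term $\tfrac12\psi_\varepsilon''(u_m)\sum_k|\sigma^k(u_m)|^2$ through the limits $\varepsilon\downarrow 0$ is precisely the hard part, and your proposal defers it to ``bookkeeping.'' The paper avoids this entirely: it approximates $u_0$ by nonnegative data in $U_p^1$, truncates the noise to finitely many Wiener processes, obtains more regular solutions $u_m^n\in\cH_p^1(\tau)$, proves $u_m^n\to u_m$ via the stability estimate \eqref{eq:approximation by u^n_m} and dominated convergence, and then invokes Krylov's maximum principle \cite[Theorem 2.5]{krylov2007maximum} for each $u_m^n$ --- the structural input being exactly the ``algebraic miracle'' you identify, namely that both nonlinearities vanish on $\{u\leq 0\}$. (Note also that the paper's mollification-plus-$r_l$ argument in Lemma \ref{lem:upper bound of u_m by v_m}, which is the closest analogue of what you propose, works only because the difference $u_m-v_m$ there satisfies a \emph{deterministic} equation; for the nonnegativity of $u_m$ itself the martingale part does not cancel, which is why the approximation-and-maximum-principle route is used instead.) Your underlying idea is sound, but to make it rigorous you should either carry out the approximation by $\cH_p^1$-solutions with finitely many noises as the paper does, or supply the full It\^o-formula justification you currently only gesture at.
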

\begin{proof}
Observe that for $u,v\in \bR$ and $\lambda>0$, we have
\begin{equation}
\label{nonlinear_cutoff}
I_\lambda(u,v) := \left| u_+^{1+\lambda}h_m(u) - v_+^{1+\lambda}h_m(v) \right| \leq N(\lambda,m)|u-v|.
\end{equation}
Indeed,
\begin{equation} \label{lipschitz_check_cut_off}
\begin{aligned}
&\left| u_+^{1+\lambda}h_m(u) - v_+^{1+\lambda}h_m(v) \right| \\
&\quad=
\begin{cases}
\,\,\left| u^{1+\lambda}h_m(u) - v^{1+\lambda}h_m(v) \right| \leq N_m|u-v|  &\text{ if}\quad u,v\geq0,\\
\,\,u^{1+\lambda}h_m(u) \leq (2m)^\lambda u \leq (2m)^\lambda (u-v) = N_m|u-v| &\text{ if}\quad u\geq0, v < 0,\\
\,\,v^{1+\lambda}h_m(v) \leq (2m)^\lambda v \leq (2m)^\lambda (v-u) = N_m|u-v| &\text{ if}\quad u<0, v\geq0,\\
\,\,0\leq N_m|u-v| &\text{ if}\quad u,v < 0.
\end{cases}
\end{aligned}
\end{equation}
Let $u,v\in \bH_p^{\kappa}(\tau)$. Then, by Lemmas \ref{prop_of_bessel_space} \eqref{norm_bounded} and \eqref{nonlinear_cutoff} with $\lambda = \beta$, we have
\begin{equation} 
\label{cut_off_lower_order_cal_1}
\begin{aligned}
&\left\| \bar b(t,\cdot) u_+^{1+\beta}(t,\cdot) h_m(u(t,\cdot)) - \bar b(t,\cdot) v^{1+\beta}_+(t,\cdot) h_m(v(t,\cdot)) \right\|^p_{H_p^{\kappa-2}} \\
&\quad = \int_{\bR^d}\left(\int_{\bR^d} R_{2-\kappa}(x-y) \bar b(t,y) I_\beta(u(t,y),v(t,y))  dy\right)^{p}dx \\
&\quad \leq \left( \int_{\bR^d}|R_{2-\kappa}(x)|dx \right)^p\int_{\bR^d}   \left| u^{1+\beta}_+(t,y) h_m(u(t,y)) - v^{1+\beta}_+(t,y) h_m(v(t,y)) \right|^p  dy \\
&\quad\leq N(\beta,\kappa,m,p) \| u(t,\cdot) - v(t,\cdot) \|_{L_p}^p
\end{aligned}
\end{equation}
on $(\omega,t)\in\opar0,\tau\cbrk$. Additionally, if we set $\bm{e} := (e_1,e_2,\dots)$, similarly to the proofs of \cite[Lemma 3]{ferrante2006spdes} and \cite[Lemma 4.4]{choi2021regularity}, on $(\omega,t)\in\opar0,\tau\cbrk$,
\begin{equation} \label{cut_off_lower_order_cal_2}
\begin{aligned}
&\left\| \xi(t,\cdot) \left[u^{1+\gamma}_+(t,\cdot) h_m(u(t,\cdot)) -  v^{1+\gamma}_+(t,\cdot) h_m(v(t,\cdot))\right] (f\ast \bm{e}) \right\|_{H^{\kappa-1}_p(\ell_2)}^p \\
&\quad\leq \int_{\bR^d}\bigg( \int_{\bR^d}\int_{\bR^d} R_{1-\kappa}(x-(y-z))R_{1-\kappa}(x-z) \\
&\quad\quad \times\left[\xi(t,y-z)\xi(t,-z)
I_\gamma(u(t,y-z),v(t,y-z))
I_\gamma( u(t,-z), v(t,-z)) \right]
dzf(dy)\bigg)^{p/2} dx \\
& \quad\leq \nu_{\kappa}^{p/2}\int_{\bR^d}\left|  u^{1+\gamma}_+(t,\cdot) h_m(u(t,\cdot)) -  v_+^{1+\gamma}(t,\cdot) h_m(v(t,\cdot))\right|^p dx\\
&\quad\leq N(\gamma,\kappa,m,p) \| u(t,\cdot) - v(t,\cdot) \|^p_{L_p},
\end{aligned}
\end{equation}
where $\nu_{\kappa}$ is the constant introduced in \eqref{def of nu kappa}. By taking integration with respect to $(\omega,t)$ and Lemma \ref{prop_of_bessel_space} \eqref{multi_ineq}, we have
\begin{equation} \label{estimate_for_cut_off_equation}
\begin{aligned}
&\left\| \bar b \left( u_+^{1+\beta} h_m(u) -  v_+^{1+\beta} h_m(v) \right) \right\|^p_{\bH_p^{\kappa-2}(\tau)} + \left\| \xi \left( u_+^{1+\gamma} h_m(u) -  v_+^{1+\gamma} h_m(v) \right)\bm{e} \right\|_{H^{\kappa-1}_p(\ell_2)}^p \\
&\quad\leq N(\beta,\gamma,\kappa,m,p) \| u-v \|_{\bL_p(\tau)}^p \\
&\quad\leq \ep \| u-v \|_{\bH^{\kappa}_p(\tau)}^p +  N(\beta,\gamma,\kappa,m,p,\ep) \| u-v \|_{\bH^{\kappa-1}_p(\tau)}^p,
\end{aligned} 
\end{equation} where we used Young's inequality in the last inequality.
Therefore, by Theorem \ref{theorem_nonlinear_case}, there exists a unique solution $u_m\in \cH_p^{\kappa}(\tau)$ to \eqref{eq:cut_off_equation}.

Next, we prove $u_m\geq0$. By considering mollification and multiplying cut-off functions, there exists a sequence of functions $\{u^n_0 \in  U_p^{1}:u^n_0\geq 0,n\in\bN\}$ such that $u^n_0 \to u_0$ in $U^{\kappa}_{p}$. By Theorem \ref{theorem_nonlinear_case}, there exists a unique solution  $u^n_m\in \cH_p^1(\tau)$ satisfying 
\begin{equation*}
du^n_m = \left(Lu_m^n - \bar{b} (u^n_{m+})^{1+\beta} h_m(u_m^n)  \right) dt + \sum_{k = 1}^n \xi (u_{m+}^n)^{1+\gamma}h_m(u_m^n)(f\ast e_k) dw_t^k,\quad 0 < t \leq \tau
\end{equation*}
with $u_m^n(0,\cdot) = u^n_0(\cdot)$. If we set $\bm{e}_n := (e_1,\dots,e_n,0,0,\dots)$, by Theorem \ref{theorem_nonlinear_case} and a similar computation with \eqref{estimate_for_cut_off_equation}, we have
\begin{equation*}
\begin{aligned}
&\|u_m - u_m^n\|_{\cH_p^{\kappa}(\tau\wedge t)}^p \\
&\quad\leq N\|u_0 - u_0^n\|_{U_p^{\kappa}}^p + N\left\| \bar{b}\left( (u_{m+})^{1+\beta} h_m(u_m) - (u_{m+}^n)^{1+\beta} h_m(u_{m}^n) \right) \right\|_{\bH_p^{\kappa-2}(\tau\wedge t)}^p \\
&\quad\quad + N\left\| \xi((u_{m+})^{1+\gamma}f\ast\bm{e} - (u_{m+}^n)^{1+\gamma}f\ast\bm{e}_n) \right\|_{\bH^{\kappa-1}_p(\tau\wedge t,\ell_2)}^p \\
&\quad\leq N\|u_0 - u_0^n\|_{U_p^{\kappa}}^p + N_m \| u_{m} - u_{m}^n \|_{\bL_p(\tau\wedge t)}^p + N\left\| \xi (u_{m+})^{1+\gamma} (f\ast\bm{e} - f\ast\bm{e}_n) \right\|_{\bH^{\kappa-1}_p(\tau\wedge t,\ell_2)}^p ,
\end{aligned}
\end{equation*}
where $N = N(\kappa,d,m,p,K,T)$. By Theorem \ref{embedding} \eqref{gronwall_type_ineq}, we have
\begin{equation}\label{eq:approximation by u^n_m}
\|u_m - u_m^n\|_{\cH_p^{\kappa}(\tau\wedge t)}^p \leq N \|u_0 - u_0^n\|_{U_p^{\kappa}}^p + N\left\| \xi (u_{m+})^{1+\gamma} (f\ast\bm{e} - f\ast\bm{e}_n) \right\|_{\bH^{\kappa-1}_p(\tau\wedge t,\ell_2)}^p,
\end{equation}
where $N = N(\kappa,m,d,p,K,T)$. Note that
\begin{equation*}
\begin{aligned}
&\left\| \xi (u_{m+})^{1+\gamma} (f\ast\bm{e} - f\ast\bm{e}_n) \right\|^p_{H_{p}^{\kappa-1}(\ell_2)} \\
&\quad \leq N  \int_{\bR} \left( \sum_{k>n} \left[ \int_{\bR}R_{1-\kappa}(x-y) \xi(t,y)(u_{m+}(t,y))^{1+\gamma} (f\ast e_k)(y) dy \right]^2 \right)^{p/2} dx.
\end{aligned}
\end{equation*}
Moreover, we have
\begin{equation*}
\begin{aligned}
&\sum_{k>n} \left[ \int_{\bR}R_{1-\kappa}(x-y) \xi(t,y)u_{m+}^{1+\gamma}(t,y) (f\ast e_k)(y) dy \right]^2 \\
&\quad \leq \sum_{k} \left[ \int_{\bR}R_{1-\kappa}(x-y) \xi(t,y)(u_{m+}(t,y))^{1+\gamma} (f\ast e_k)(y) dy \right]^2 \\
&\quad \leq  K^2\int_{\bR}|R_{1-\kappa}(x-y)|^2 |u_{m+}(t,y)|^{2+2\gamma}  dy.
\end{aligned}
\end{equation*}
Thus, by the dominated convergence theorem, we have
\begin{equation*}
\left\| \xi(t,\cdot) (u_{m+}(t,\cdot))^{1+\gamma}(f\ast\bm{e} - f\ast\bm{e}_n)(\cdot) \right\|^p_{H_{p}^{\kappa-1}(\ell_2)} \to 0 \quad \text{as}\quad n\to \infty,
\end{equation*}
for almost every $(\omega,t)$. Then, by using the dominated convergence theorem again, we have
\begin{equation*}
\left\| \xi(t,\cdot) (u_{m+}(t,\cdot))^{1+\gamma}(f\ast\bm{e} - f\ast\bm{e}_n)(\cdot) \right\|^p_{\bH_{p}^{\kappa-1}(\tau,\ell_2)} \to 0 \quad \text{as}\quad n\to \infty.
\end{equation*}
By \eqref{eq:approximation by u^n_m}, we get $u^n_m \to u_m$ in probability as $n\to \infty$. Since we know that the limit $u_m$ is unique, it suffices to show that $u^n_{m}\geq0$ for each $n$. Since $u^n_{m}\in\cH_p^1(\tau)$,  By applying \cite[Theorem 2.5]{krylov2007maximum} with $f = -\bar b (u^n_{m+})^{1+\beta}h_m(u^n_m)$ and $g = \xi(u^n_{m+})^{1+\gamma}h_m(u^n_m)(f\ast\bm{e}_n)$, we have $u^n_{m}\geq0$. The lemma is proved.
\end{proof}

\subsection{Uniform bounds on local solutions}\label{subsec:uniform bound on local solutions}

In this subsection, we prove that the local solution does not experience a finite time explosion (see Lemma \ref{lem:uniform bound of local solutions in probability}).

The following lemma is useful to control the coefficients of \eqref{main_equation}.

\begin{lemma}[Lemma 5.5 of \cite{choi2021regularity}]\label{lem:cefficient control}
For $k\in \bN$, define
$$\psi_k(x) := \frac{1}{\cosh(|x|/k)}.
$$
\begin{enumerate}[(i)]

\item For $i,j\in\{1,2,\dots,d\}$,
\begin{equation}\label{eq:derivatives of psi}
  |\psi_{kx^i}(x)| \leq k^{-1}\psi_k(x)\quad \text{and} \quad|\psi_{kx^ix^j}(x)| \leq N(d) k^{-2} \psi_{k}(x).
\end{equation}

\item 
Suppose Assumption \ref{assumption:coefficients} holds. Then, for $(\omega,t,x)\in\Omega\times(0,\infty)\times\bR$, we have
\begin{equation}
\label{eq:coefficient control}
\begin{aligned}
& (a^{ij}\psi_k)_{x^ix^j} - (b^{i}\psi_k)_{x^i} + (c - K)\psi_k\\
& = a^{ij}\psi_{kx^ix^j} + \left( 2a^{ij}_{x^j} - b^{i} \right)\psi_{kx^i} + \left( a^{ij}_{x^ix^j} - b^{i}_{x^i} + c - K \right)\psi_{k} \leq 0.
\end{aligned}
\end{equation}
\end{enumerate}

\end{lemma}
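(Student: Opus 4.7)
For part (i), the plan is a direct chain-rule computation. Writing $r := |x|/k$ and using $(\operatorname{sech})' = -\operatorname{sech}\tanh$, one finds
\[
\psi_{kx^i}(x) = -\operatorname{sech}(r)\tanh(r) \cdot \frac{x^i}{k\,|x|},
\]
so $|\psi_{kx^i}| \leq k^{-1}\psi_k$ follows from $|\tanh|, |x^i/|x|| \leq 1$. Differentiating once more produces two types of terms: a quadratic-in-$x$ group of the form $(\operatorname{sech}\tanh^2 - \operatorname{sech}^3)\cdot x^ix^j/(k^2|x|^2)$, visibly $O(k^{-2})\psi_k$, and a group proportional to $\operatorname{sech}(r)\tanh(r)/(k|x|)$ that looks singular at the origin. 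The singularity is tamed by the elementary bound $|\tanh(r)| \leq r$, equivalently $|\tanh(|x|/k)|/|x| \leq k^{-1}$, so this group is also $O(k^{-2})\psi_k$. Together these give the second inequality in \eqref{eq:derivatives of psi} with a dimensional constant.

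For part (ii), the stated equality is merely the product rule applied to $(a^{ij}\psi_k)_{x^ix^j}$ and $(b^i\psi_k)_{x^i}$. For the pointwise inequality $\leq 0$, the plan is to separate the right-hand side into three groups. For the zeroth-order group, I would use the sum form of the $C^2$-bound in \eqref{boundedness_of_deterministic_coefficients} together with the lower bound $\bar b \geq K^{-1}$ from \eqref{lower bound of bar b} to extract a strictly negative leading coefficient:
\[
a^{ij}_{x^ix^j} - b^i_{x^i} + c \leq \sum_{i,j}|a^{ij}|_{C^2} + \sum_i |b^i|_{C^2} + |c|_{C^2} \leq K - |\bar b|_{C^2} - |\xi|_{L_\infty} \leq K - K^{-1}.
\]
Hence $a^{ij}_{x^ix^j} - b^i_{x^i} + c - K \leq -K^{-1}$. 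For the remaining two groups, part (i) together with $|a^{ij}|, |a^{ij}_{x^j}|, |b^i| \leq K$ yields the absolute bound $N(d,K)(k^{-1} + k^{-2})\psi_k$. Combining, the entire expression is controlled by $\bigl[N(d,K)(k^{-1}+k^{-2}) - K^{-1}\bigr]\psi_k$.

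The main obstacle is that this bound is $\leq 0$ only once $k$ exceeds a threshold $k_0(d,K)$, whereas the lemma is stated for all $k$. I would reconcile this either by restricting attention to $k \geq k_0(d,K)$, which is the relevant range in the subsequent applications (where one eventually lets $k\to \infty$), or by absorbing the $O(k^{-1})$ correction into a slightly enlarged constant $K' = K + N(d,K)$ in place of $K$, a harmless rescaling. This mirrors the treatment in \cite{choi2021regularity}, from which the lemma is quoted verbatim.
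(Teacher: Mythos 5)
The paper itself offers no proof of this lemma: it is imported verbatim from \cite[Lemma~5.5]{choi2021regularity}, so there is nothing internal to compare against and your proposal has to stand on its own. Part (i) is correct and complete: the chain rule gives $\psi_{kx^i}=-\operatorname{sech}(r)\tanh(r)\,x^i/(k|x|)$ with $r=|x|/k$, the quadratic group in the second derivative is bounded by $k^{-2}\psi_k$ since $|(\operatorname{sech}\tanh)'|=\operatorname{sech}|{\operatorname{sech}^2-\tanh^2}|\leq\operatorname{sech}$, and you correctly identify and tame the apparent $1/|x|$ singularity in the group coming from $\partial_{x^j}(x^i/|x|)$ via $|\tanh(|x|/k)|\leq |x|/k$ (note also that $\operatorname{sech}(|x|/k)=1-|x|^2/(2k^2)+O(|x|^4)$ is genuinely $C^2$ at the origin, so the pointwise bounds are meaningful there). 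The identity in part (ii) is indeed just the product rule together with relabeling the dummy indices.

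The issue you flag in the sign argument for part (ii) is real. Your zeroth-order estimate is valid (the sum form of \eqref{boundedness_of_deterministic_coefficients} together with $|\bar b|_{C^2}\geq\sup\bar b\geq K^{-1}$ does yield $a^{ij}_{x^ix^j}-b^i_{x^i}+c-K\leq -K^{-1}$), but the margin $K^{-1}$ it produces cannot absorb the first-order group, which is of size up to $N(d)Kk^{-1}\psi_k$; so the conclusion $\leq 0$ only follows for $k\geq k_0(d,K)$, or after replacing $K$ by a constant $N(d,K)$ in the statement. Your proposed reconciliation is the right one and is harmless where the lemma is actually used: in Lemma \ref{Lq bounds} the inequality is applied to $\psi_n$ with $n\to\infty$, and moreover it is applied with the weight $e^{-4Ks}$, i.e.\ effectively with $4K$ in place of $K$, so either fix (restricting to large $k$, or enlarging the constant) leaves the rest of the paper untouched. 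The only caveat is that, as written, the lemma asserts the inequality for every $k\in\bN$ with the constant exactly $K$, and your argument does not deliver that literal claim; you should state the threshold $k_0(d,K)$ (or the enlarged constant) explicitly rather than leaving the choice between the two fixes open.
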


With the help of Lemma \ref{lem:cefficient control}, we obtain a uniform $L_{1+\beta}$-bound for local solutions.

\begin{lemma}
\label{Lq bounds}
Let $\tau\leq T$ be a bounded stopping time and $m\in \bN$ be fixed. Suppose all the assumptions of Lemma \ref{cut_off_lemma} hold and we assume that $u_0\in  L_1(\Omega,\cF_0;L_1(\bR^d))\cap U_{p}^{\kappa}$. If $u_m\in \cH_p^{\kappa}(\tau)$ is the solution to \eqref{cut_off_lemma} introduced in Lemma \ref{cut_off_lemma}, then
% for $\alpha \in (0,1)$, 
% \begin{equation} 
% \label{estimate of L1 bound}
% \bE \sup_{t\leq\tau}\|u_m(t,\cdot)\|_{L_1}^\alpha \leq N(\alpha,K,T)\bE \|u_0\|_{L_1}^\alpha.
% \end{equation}
% Additionally, 
\begin{equation} 
\label{estimate of L1+beta bound}
\bE \int_0^\tau \int_{\bR} (u_{m}(s,x))_+^{1+\beta}h_m(u_m(s,x)) dx ds \leq  N(K,T) \bE \|u_0\|_{L_1},
\end{equation}
where $h_m$ is the function introduced in \eqref{def of hm}.
\end{lemma}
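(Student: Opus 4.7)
The plan is to test the equation \eqref{eq:cut_off_equation} against the weight $\psi_k$ from Lemma \ref{lem:cefficient control}, use the dissipative pointwise inequality \eqref{eq:coefficient control} to turn the deterministic drift into something controllable by $K(u_m,\psi_k)$, close the estimate by Gr\"onwall, and finally pass $k\to\infty$ by monotone convergence. Throughout, the cut-off $h_m$ keeps every nonlinear term uniformly bounded, which legitimizes every step.

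First, since $u_m\in\cH_p^\kappa(\tau)$ is nonnegative by Lemma \ref{cut_off_lemma}, the plus-subscripts can be dropped. Since $\psi_k\in\cS$, I would apply the identity \eqref{def_of_sol} with $\phi=\psi_k$ to obtain
\begin{equation*}
(u_m(t\wedge\tau,\cdot),\psi_k)=(u_0,\psi_k)+\int_0^{t\wedge\tau}\!\!\!\bigl(Lu_m-\bar b\,u_m^{1+\beta}h_m(u_m),\psi_k\bigr)ds+M_{t\wedge\tau},
\end{equation*}
where $M_t$ is the stochastic-integral remainder. Because $|\xi u_m^{1+\gamma}h_m(u_m)|\le K(2m)^{1+\gamma}$ uniformly and $\psi_k\in L_2\cap L_\infty$, and the isometry of the Dalang--Walsh integral against $F(ds,dx)$ only requires $\|\xi u_m^{1+\gamma}h_m(u_m)\psi_k\|_{\cH}$ to be bounded (which holds since $f$ is a tempered measure and $\psi_k\in\cS$), the process $M_t$ is a genuine martingale on $[0,\tau]$, so $\bE M_{t\wedge\tau}=0$.

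Next, integrating by parts in $x$ moves $L$ onto $\psi_k$, producing the formal adjoint $L^\ast\psi_k=(a^{ij}\psi_k)_{x^ix^j}-(b^i\psi_k)_{x^i}+c\psi_k$. Writing $L^\ast\psi_k=\bigl[(a^{ij}\psi_k)_{x^ix^j}-(b^i\psi_k)_{x^i}+(c-K)\psi_k\bigr]+K\psi_k$ and using \eqref{eq:coefficient control}, the bracketed term is pointwise non-positive; combined with $u_m\ge 0$ and $\bar b\ge K^{-1}$, taking expectation yields
\begin{equation*}
\bE(u_m(t\wedge\tau),\psi_k)+K^{-1}\bE\!\int_0^{t\wedge\tau}\!\!\!(u_m^{1+\beta}h_m(u_m),\psi_k)\,ds\le \bE(u_0,\psi_k)+K\bE\!\int_0^{t\wedge\tau}\!(u_m,\psi_k)\,ds.
\end{equation*}
Dropping the nonnegative second term on the left gives an integral inequality for $t\mapsto\bE(u_m(t\wedge\tau),\psi_k)$, and Gr\"onwall's inequality yields $\bE(u_m(t\wedge\tau),\psi_k)\le e^{Kt}\bE(u_0,\psi_k)\le e^{KT}\bE\|u_0\|_{L_1}$ since $\psi_k\le 1$.

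Plugging this back, the $(u_m^{1+\beta}h_m(u_m),\psi_k)$ term is bounded by $K(1+KTe^{KT})\bE\|u_0\|_{L_1}=:N(K,T)\bE\|u_0\|_{L_1}$ uniformly in $k$. Since $\psi_k\nearrow 1$ pointwise as $k\to\infty$ and the integrand is nonnegative, monotone convergence removes the weight and delivers \eqref{estimate of L1+beta bound}. The main subtle point is the justification of the distributional testing against $\psi_k$ and the martingale property of $M_t$; both are handled by the $m$-dependent $L_\infty$ bound furnished by $h_m$, so no further regularity of $u_m$ beyond membership in $\cH_p^\kappa(\tau)$ is needed.
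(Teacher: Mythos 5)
Your proof is correct and follows essentially the same route as the paper: test \eqref{eq:cut_off_equation} against $\psi_k$, use \eqref{eq:coefficient control} and $\bar b\ge K^{-1}$, take expectations to kill the martingale, and let $k\to\infty$ by monotone convergence. The only cosmetic difference is that you close the estimate with Gr\"onwall, whereas the paper absorbs the $K(u_m,\psi_k)$ term by working with the integrating factor $e^{-4Kt}$ from the start.
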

\begin{proof}
Let $m\in \bN$ be fixed. For $n\in\bN$, let $\psi_n$ be a function given in Lemma \ref{lem:cefficient control}. Recall that $u_m$ is the solution to \eqref{eq:cut_off_equation}. Since $\psi_n\in \cS$, by employing It\^o's formula, integration by parts, and \eqref{eq:coefficient control}, we have
\begin{equation}
\label{L1_bound_calculation_1}
\begin{aligned}
& e^{-4K\tau}\left(u_m(\tau ,\cdot),\psi_n  \right)\\
% & = (u_0, \psi_n) + n^{-2}\int_0^\tau\int_{\bR} u_m(s,x)a^{ij}(s,x)\psi_{x^ix^j}(x/n)e^{-4Ks}dxds \\
% &\quad + n^{-1}\int_0^\tau\int_{\bR} u_m(s,x)\left( 2a^{ij}_{x^j} - b^i   \right)(s,x) \psi_{x^i}(x/n)e^{-4Ks}dxds \\
% &\quad + \int_0^\tau\int_{\bR} u_m(s,x) \left( a^{ij}_{x^ix^j}(s,x) - b^i_{x^i}(s,x) + c(s,x) - 4K \right) \psi_n(x) e^{-4Ks}dxds \\
&\leq (u_0,\psi_n)  - \int_0^\tau\int_{\bR} \bar b (s,x)  u_{m+}^{1+\beta}(s,x)  h_m(u_m(s,x))\psi_n(x)e^{-4Ks}dxds  \\
&\quad + \int_0^\tau\int_{\bR} \xi(s,x)u_{m+}^{1+\gamma}(s,x)h_m(u_m(s,x))(f\ast e_k)(x)\psi_n(x)dxdw_t^k.
\end{aligned}
\end{equation}
By taking expectations
% , \eqref{lower bound of bar b}, \eqref{boundedness_of_deterministic_coefficients}, and \eqref{eq:coefficient control}, 
we have
\begin{equation}
\label{L1_bound_calculation_2}
\begin{aligned}
&e^{-4KT}\bE(u_m(\tau,\cdot),\psi_n) + K^{-1}e^{-4KT}\bE\int_0^\tau\int_{\bR}  u^{1+\beta}_m(s,x)  h_m(u_m(s,x)) \psi_n(x) dxds
% \\
% &\quad
\leq \bE\int_\bR u_0(x)dx.
% + n^{-1}N\bE\int_0^\tau \int_{\bR} |u_m(s,x)| \left( |\psi_{xx}(x/n)| + |\psi_{x}(x/n)| \right) dxds,
\end{aligned}
\end{equation} 
% where $N>0$ depends only on $K$. By H\"older's inequality and changing of variable, we have
% \begin{equation*}
% \begin{aligned}
% &n^{-1}\bE\int_0^\tau\int_{\bR} |u_m(s,x)| \left( |\psi_{xx}(x/n)| + |\psi_{x}(x/n)| \right) dxds \\
% &\quad \leq n^{-1}\| u_m \|_{\bL_p(\tau)} \left(\int_{\bR} \left( |\psi_{xx}(x/n)| + |\psi_{x}(x/n)| \right)^{\frac{p}{p-1}} dx\right)^{1-1/p} \\
% &\quad \leq n^{-1/p}N_m.
% \end{aligned}
% \end{equation*}
% Therefore, we have
% \begin{equation} 
% \label{two kinds of L1bound}
% \begin{aligned}
% &\bE(u_m(\tau,\cdot),\psi_n) + K^{-1}\bE\int_0^\tau\int_{\bR} \left( u_m(s,x) \right)^{1+\beta} h_m(u_m(s,x)) \psi_n(x)dxds \\
% &\quad \leq \bE\| u_0 \|_{L_1(\Omega\times\bR)} e^{4KT}+ n^{-1/p}  N_me^{4KT},
% \end{aligned}
% \end{equation}
% where $N_m$ is independent of $n$.
By letting $n\to\infty$, we have \eqref{estimate of L1+beta bound} due to the nonnegativity of $u_m$.

\end{proof}

{The following three lemmas (Lemmas \ref{lem:well-posedness of v_m}, \ref{lem:upper bound of u_m by v_m}, and \ref{lem:noise related solution control}) are used to get an upper bound of supremum of local solutions $u_m$ (see \eqref{bound of vm - 2} in Lemma \ref{lem:noise related solution control}).

}

\begin{lemma}
\label{lem:well-posedness of v_m}
Suppose all the assumptions of Lemma \ref{cut_off_lemma} hold and $u_m\in \cH_{p}^{\kappa}(\tau)$ is the solution to \eqref{eq:cut_off_equation} introduced in Lemma \ref{cut_off_lemma}. Then, there exists a unique solution $v_{m}\in\cH_{p}^{\kappa}(\tau)$ such that
\begin{equation}
\label{eq:noise_dominating_equation}
dv_m = Lv_m dt + \xi u^{1+\gamma}_{m+}h_m(u_m)(f\ast e_k) dw_t^k,\quad 0 < t \leq \tau;\quad v_m(0,\cdot) = u_0.
\end{equation}
Furthermore, for $\alpha_1,\alpha_2\in(0,\infty)$ and $p\geq2$ satisfying \eqref{condition_for_alpha_beta}, we have
$$v_m \in C^{\alpha_1-1/p}([0,\tau];C^{\kappa-2\alpha_2-1/p}(\bR^d) )$$ 
almost surely and 
\begin{equation} 
\label{embedding for vm}
\mathbb{E} |v_m|^p_{C^{\alpha_1-1/p}([0,\tau];C^{\kappa-2\alpha_2-1/p}(\bR^d) )}\leq N(\alpha_1,\alpha_2,\kappa,p,K,T)\|v_m\|_{\cH^{\kappa}_{p} (\tau)}^p.
\end{equation}
\end{lemma}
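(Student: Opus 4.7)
The key observation is that \eqref{eq:noise_dominating_equation} is \emph{linear} in the unknown $v_m$: the stochastic forcing term $\xi u_{m+}^{1+\gamma} h_m(u_m)(f\ast e_k)$ does not depend on $v_m$, only on the already-constructed solution $u_m \in \cH_p^{\kappa}(\tau)$ from Lemma \ref{cut_off_lemma}. Therefore, the plan is to apply Theorem \ref{theorem_nonlinear_case} directly with $n = \kappa-2$, $g(t,x,v) \equiv 0$, and $\sigma^k(t,x,v) = \xi(t,x) u_{m+}^{1+\gamma}(t,x)\, h_m(u_m(t,x))\,(f\ast e_k)(x)$. Since neither $g$ nor $\sigma$ depends on $v$, the Lipschitz-type condition \eqref{conditions_on_f_and_g} in Assumption \ref{assumption:f_and_g} is satisfied trivially (with $\ep = 0$ and $N_\ep = 0$), and $g(\cdot,0) \equiv 0 \in \bH_p^{\kappa-2}(\tau)$ is immediate.

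The only substantive step is to verify that $\sigma(\cdot,0) \in \bH_p^{\kappa-1}(\tau,\ell_2)$. This is precisely the $v \equiv 0$ case of the computation \eqref{cut_off_lower_order_cal_2} in the proof of Lemma \ref{cut_off_lemma}. Using the elementary bound $|u_{m+}^{1+\gamma} h_m(u_m)| \le (2m)^\gamma |u_m|$ (which reproduces the mechanism of \eqref{lipschitz_check_cut_off}), together with the $R_{1-\kappa}$-kernel representation and the finiteness of $\nu_\kappa$ from \eqref{def of nu kappa} (which is where the reinforced Dalang's condition \eqref{reinforced Dalang's condition} enters), I expect to obtain
\begin{equation*}
\bigl\|\xi u_{m+}^{1+\gamma} h_m(u_m)(f\ast \bm{e})\bigr\|^p_{\bH_p^{\kappa-1}(\tau,\ell_2)} \le N(\gamma,\kappa,m,p,K)\,\|u_m\|_{\bL_p(\tau)}^p,
\end{equation*}
and the right-hand side is finite since $u_m \in \cH_p^{\kappa}(\tau)\subset \bH_p^{\kappa}(\tau)\subset \bL_p(\tau)$ by Lemma \ref{prop_of_bessel_space} \eqref{norm_bounded}. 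Hence all hypotheses of Theorem \ref{theorem_nonlinear_case} are met, and we conclude the existence of a unique $v_m \in \cH_p^{\kappa}(\tau)$ solving \eqref{eq:noise_dominating_equation} (with the corresponding a priori bound \eqref{nonlinear_estimate}).

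For the Hölder regularity \eqref{embedding for vm}, I would directly invoke Corollary \ref{embedding_corollary} with $n = \kappa$: given any $\alpha_1,\alpha_2$ satisfying \eqref{condition_for_alpha_beta}, the estimate \eqref{sol_embedding} applied to $v_m \in \cH_p^{\kappa}(\tau)$ yields exactly the claim. I do not anticipate a real obstacle here, since the argument is essentially a bookkeeping exercise: the nontrivial analytic work (the kernel estimate exploiting $\nu_\kappa < \infty$, and the abstract $L_p$-theory for linear SPDEs with random coefficients) is already packaged in \eqref{cut_off_lower_order_cal_2}, Theorem \ref{theorem_nonlinear_case}, and Corollary \ref{embedding_corollary}. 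The one thing worth pausing over is simply making sure to track the $m$-dependence through the cutoff bound, since later (in Lemma \ref{lem:noise related solution control}) we will need uniform-in-$m$ control and will have to argue separately rather than reuse this estimate.
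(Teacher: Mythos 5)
Your proposal is correct and follows essentially the same route as the paper: the authors likewise set $u=u_m$, $v=0$ in \eqref{cut_off_lower_order_cal_2} to check that the $v$-independent noise term lies in $\bH_p^{\kappa-1}(\tau,\ell_2)$, then apply Theorem \ref{theorem_nonlinear_case} and Corollary \ref{embedding_corollary}. Your explicit remarks about the linearity in $v_m$, the trivial verification of \eqref{conditions_on_f_and_g}, and the $m$-dependence of the cutoff bound are just slightly more detailed bookkeeping of the same argument.
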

\begin{proof}
% {\red Note that \eqref{cut_off_lower_order_cal_2} verifies Assumption \ref{assumption:f_and_g} ($\tau$). Theorem \ref{theorem_nonlinear_case} and Corollary \ref{embedding_corollary} yield the results.}

Letting $u = u_m$ and $v = 0$ in \eqref{cut_off_lower_order_cal_2}, we have $\xi u_{m+}^{1+\gamma}h_m(u_m)(f\ast \bm{e}) \in \bH_p^{\kappa-1}(\tau,\ell_2)$ through a same computation as in $\eqref{cut_off_lower_order_cal_2}${, where $\bm{e} := (e_1,e_2,\dots)$}. This shows that \eqref{eq:noise_dominating_equation} satisfies Assumption \ref{assumption:f_and_g}. As a result, the application of Theorem \ref{theorem_nonlinear_case} and Corollary \ref{embedding_corollary} produces the desired result.
\end{proof}

\begin{lemma}
\label{lem:upper bound of u_m by v_m}
Let all the assumptions of Lemma \ref{cut_off_lemma} hold. Suppose $u_m\in \cH_p^{\kappa}(\tau)$ and $v_m\in \cH_p^{\kappa}(\tau)$ be the solutions introduced in Lemmas \ref{cut_off_lemma} and \ref{lem:well-posedness of v_m}, respectively. Then,  we have $u_m \leq v_m$.
\end{lemma}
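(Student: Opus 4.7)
The plan is to reduce the comparison to a deterministic parabolic inequality satisfied by the difference $w_m := u_m - v_m$ and then invoke a maximum principle. First I would subtract the two governing equations: since $u_m$ and $v_m$ share the same initial data and the same stochastic integrand $\xi u_{m+}^{1+\gamma}h_m(u_m)(f\ast e_k)$, the martingale parts cancel, and by the definition of the solution spaces $\cH_p^{\kappa}(\tau)$ (see \eqref{def_of_sol}) we obtain that $w_m \in \cH_p^{\kappa}(\tau)$ satisfies, pathwise for a.e.\ $\omega$,
\begin{equation*}
dw_m = \bigl(Lw_m + F\bigr)\,dt,\qquad 0<t\leq\tau;\qquad w_m(0,\cdot)=0,
\end{equation*}
where $F(t,x) := -\bar{b}(t,x)u_{m+}^{1+\beta}(t,x)h_m(u_m(t,x))$. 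By Assumption \ref{assumption:coefficients}, in particular \eqref{lower bound of bar b}, and the nonnegativity of $u_m$ proved in Lemma \ref{cut_off_lemma}, the forcing satisfies $F\leq 0$.

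Next I would invoke a parabolic maximum principle to conclude $w_m \leq 0$. Since the stochastic term has vanished, this is a purely deterministic parabolic equation for each fixed $\omega$, with $L$ a uniformly elliptic operator with bounded coefficients (Assumption \ref{assumption:coefficients}) and with nonpositive source and zero initial data. The conclusion $w_m\leq 0$ is a direct application of \cite[Theorem 2.5]{krylov2007maximum} (used in exactly the same way in the nonnegativity argument in Lemma \ref{cut_off_lemma}) with the choices ``$f$'' $=F\leq 0$ and ``$g$'' $\equiv 0$; the zero stochastic integrand trivially makes the assumption on the stochastic term vacuous. Equivalently, one may represent $w_m$ through the deterministic Duhamel formula driven by the Green's function associated with the random operator $L$ (which preserves nonnegativity thanks to the maximum principle for $L$), so that
\begin{equation*}
w_m(t,x) = \int_0^t \!\!\int_{\bR^d} G^{L}(s,t;x,y)\,F(s,y)\,dy\,ds \leq 0,
\end{equation*}
since $G^{L}\geq 0$ and $F\leq 0$.

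The only mild technical point to verify is that the regularity of $w_m$ is sufficient to apply the cited maximum principle in the $\cH_p^{\kappa}(\tau)$ setting rather than in a classical pointwise setting, but since $w_m \in \cH_p^{\kappa}(\tau)\subset \cH_p^1(\tau)$ when $\kappa\geq 1$ and one can otherwise approximate $u_0$ and the forcing by smoother data as in the second step of the proof of Lemma \ref{cut_off_lemma}, the comparison passes to the limit in $\cH_p^{\kappa}(\tau)$ (and a.e.\ pointwise along a subsequence via the embedding in Corollary \ref{embedding_corollary}). This yields $u_m \leq v_m$, completing the proof. The main potential obstacle, applying the maximum principle at this level of regularity, is handled by the same approximation scheme already employed in Lemma \ref{cut_off_lemma}.
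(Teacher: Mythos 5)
Your reduction is the same as the paper's first step: both proofs exploit the fact that \eqref{eq:cut_off_equation} and \eqref{eq:noise_dominating_equation} carry the \emph{identical} stochastic integrand $\xi u_{m+}^{1+\gamma}h_m(u_m)(f\ast e_k)$, so the martingale parts cancel and $w_m=u_m-v_m$ solves, pathwise, a deterministic parabolic equation with zero initial data and nonpositive source $F=-\bar b\,u_{m+}^{1+\beta}h_m(u_m)$. Where you diverge is in how the comparison $w_m\leq 0$ is then extracted. The paper does \emph{not} cite a ready-made maximum principle at this point: it mollifies the equation, tests against smooth convex approximations $r_l$ of the positive part and the weights $\psi_k=1/\cosh(|x|/k)$, estimates the resulting terms $({\bf A}),({\bf B}),({\bf C})$ by integration by parts, and closes with a Gr\"onwall-type absorption via the factor $e^{-qt}$ — essentially a self-contained $L^1$-comparison argument on all of $\bR^d$. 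Your route via \cite[Theorem 2.5]{krylov2007maximum} (or positivity of the Green's function of $L$) is softer and shorter, and it is consistent with how the paper itself uses that theorem to prove $u_m\geq 0$ in Lemma \ref{cut_off_lemma}; the price is that you must genuinely justify the regularity needed to invoke it. Your clause ``$\cH_p^{\kappa}(\tau)\subset\cH_p^{1}(\tau)$ when $\kappa\geq 1$'' is essentially vacuous here, since Assumption \ref{assumption on f} forces $\kappa\in(0,1]$, so a priori $w_m$ only lies in $\cH_p^{\kappa}(\tau)$ with $\kappa$ possibly small. The clean fix is a bootstrap rather than an approximation of the data: since $h_m$ cuts off at $|u_m|\leq 2m$, one has $|F|\leq N_m|u_m|$, so $F\in\bL_p(\tau)$; the deterministic $L_p$-theory then produces a solution of $d\tilde w=(L\tilde w+F)\,dt$, $\tilde w(0)=0$, in $\cH_p^{2}(\tau)$, and uniqueness of the deterministic problem in the larger class $\cH_p^{\kappa}(\tau)$ identifies $\tilde w=w_m$ (this is exactly the device used in the proof of Theorem \ref{uniqueness of solution in p}). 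After this upgrade, $w_m\in\cH_p^{1}(\tau)$ and Krylov's maximum principle applies with source $\bar b\,u_{m+}^{1+\beta}h_m(u_m)\geq 0$ for $v_m-u_m$ and vanishing stochastic integrand, giving $u_m\leq v_m$. With that one repair your argument is complete and arguably more economical than the paper's; the paper's hands-on proof has the advantage of being self-contained and of not requiring the regularity bootstrap.
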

\begin{proof}

For convenience, set $w_m(t,x):= u_m(t,x) - v_m(t,x)$. Choose a nonnegative function $\zeta\in C_c^\infty(\bR^d)$ such that $\int_{\bR^d} \zeta(x) dx = 1$ and define $\zeta_{\ep}(x) := \ep^{-d}\zeta(x/\ep)$. Let $\phi^{(\ep)}(x) := \int_{\bR^d} \phi(y) \zeta_\ep(x-y)dy$ for any function $\phi\in L_p(\bR^d)$. Then, by considering $\zeta_\ep$ as test functions to \eqref{eq:cut_off_equation} and \eqref{eq:noise_dominating_equation}, we have
\begin{equation}
\label{mollified eq}
\begin{aligned}
w^{(\ep)}_m(t,x) 
&= \int_{0}^{t} \left(a^{ij}_{x^ix^j}w_m\right)^{(\ep)}(s,x) - 2 \left(a^{ij}_{x^j}w_m\right)^{(\ep)}_{x^j}(s,x) + \left(a^{ij}w_m\right)^{(\ep)}_{x^ix^j}(s,x) - \left(b^i_{x^i}w_m\right)^{(\ep)}(s,x) ds \\
&\quad + \int_0^{t} \left(b^i w_m\right)_{x^i}^{(\ep)}(s,x) + \left(cw_m\right)^{(\ep)}(s,x) - \left( \bar b u^{1+\beta}_{m+}(s,\cdot) h_m(u_m(s,\cdot)) \right)^{(\ep)}(x) ds
\end{aligned}
\end{equation}
holds for all $t\leq \tau$ almost surely.

Next, for $l = 0,1,2,\dots,$ define
$ \delta_l := \exp\left( \frac{-l(l+1)}{2} \right),
$
and a nonnegative function $\varphi_l\in C_c^\infty(\bR)$ such that 
\begin{equation}
\label{prop of varphi}
\varphi_l(z) \leq \frac{2}{lz}\quad\text{on}\quad (\delta_l,\delta_{l-1}),\quad\varphi_l = 0\quad\text{on}\quad(\delta_l,\delta_{l-1})^c,
\end{equation}
and $\int_{\delta_l}^{\delta_{l-1}}\varphi_l(z)dz = 1$.
Set
\begin{equation}
\label{def of rn}
r_l(z) = \int_0^z\int_0^{z_2} \varphi_l(z_1) dz_1 dz_2.
\end{equation}
Observe that $r_l(z)$ is twice continuously differentiable, 
\begin{equation}
\label{prop of r'}
1_{[\delta_{l-1},\infty)}(z) \leq r_l'(z) \leq 1_{[\delta_l,\infty)}(z),
\end{equation}
\newline
and 
\begin{equation}
\label{prop of r}
(z - \delta_{l-1})1_{[\delta_{l-1},\infty)} \leq r_l(z) \leq (z - \delta_{l})1_{[\delta_{l},\infty)}.
\end{equation}
Let $q>0$ be specified later. Then, for almost sure $\omega\in\Omega$, by multiplying $\psi_k$ introduced in Lemma \ref{lem:cefficient control}, taking the integration, and the chain rule, we have
\begin{equation}\label{eq:integration by parts for r_lpsi_k}
  \int_{\bR^d}r_l\left( w_m^{(\ep)}(t,x) \right)\psi_k(x) e^{-qt}dx  = ({\bf A}) + ({\bf B}) +({\bf C}),
\end{equation} where 
\begin{equation}\label{eq:definition of A,B}
\begin{aligned}
   ({\bf A})&:= \int_0^t \int_{\bR^d} r_l'\left( w_m^{(\ep)}(s,x) \right) \left(a^{ij}w_m\right)^{(\ep)}_{x^ix^j}(s,x) \psi_k(x) e^{-qs}dxds ,\\
   ({\bf B})&:= \int_0^t \int_{\bR^d} r_l'\left( w_m^{(\ep)}(s,x) \right) \left[ - 2 \left(a^{ij}_{x^j}w_m\right)^{(\ep)}_{x^i}(s,x) + \left(b^iw_m\right)_{x^i}^{(\ep)}(s,x) \right] \psi_k(x)e^{-qs}dxds,
\end{aligned}
\end{equation} and 
\begin{equation}\label{eq:definition of C}
  \begin{aligned}
     ({\bf C})&:=\int_0^t \int_{\bR^d} r_l'\left( w_m^{(\ep)}(s,x) \right) \left( \left(  (a^{ij}_{x^ix^j} - b^i_{x^i} + c)(s,\cdot)  \right)w_m(s,\cdot) \right)^{(\ep)}(x)\psi_k(x)e^{-qs} dxds \\
&\quad - \int_0^t \int_{\bR^d} r_l'\left( w_m^{(\ep)}(s,x) \right) \left(\bar b(s,\cdot) u^{1+\beta}_{m+}(s,\cdot) h_m(u_m(s,\cdot)) \right)^{(\ep)}(x)\psi_k(x) e^{-qs}dxds \\
&\quad -q\int_0^t \int_{\bR^d} r_l\left( w_m^{(\ep)}(s,x) \right)  \psi_k(x)e^{-qs} dxds.
  \end{aligned}
\end{equation}
We first bound $({\bf A})$. Observe that since $a^{ij}(t,x)$ is differentiable in $x$, we have 
\begin{equation}\label{eq:approximation of the difference of leading terms}
  \begin{aligned}
& \left| \left( a^{ij}w_{m} \right)_{x^j}^{(\ep)}(s,x) - a^{ij}(s,x)w_{mx^j}^{(\ep)}(s,x) \right| \\
&\quad = \ep^{-1}\left|\int_{\bR^d}  \left(  a^{ij}(s,x-\ep y) - a^{ij}(s,x)  \right)w_m(s,x-\ep y)\zeta_{y^j}(y)  dy \right|\\
&\quad \leq N(K)\int_{\bR^d} |w_m(s,x-\ep y)||y\zeta_{y^j}(y)|dy.
\end{aligned} 
\end{equation} Using the integration by parts, we have 
\begin{equation}\label{eq:integration by parts for A}
  \begin{aligned}
    ({\bf A}) &= \int_{\bR^d} r'_l\left( w_m^{(\ep)}(s,x) \right) \left( a^{ij}w_m \right)_{x^ix^j}^{(\ep)}(s,x) \psi_k(x)dx \\
& = -\int_{\bR^d} r''_l\left( w_m^{(\ep)}(s,x) \right)  w_{mx^i}^{(\ep)}(s,x)\left( a^{ij}w_m \right)_{x^j}^{(\ep)}(s,x)\psi_k(x) dx \\
& \quad -\int_{\bR^d} r'_l\left( w_m^{(\ep)}(s,x) \right)  \left( a^{ij}w_m \right)_{x^j}^{(\ep)}(s,x) \psi_{kx^i}(x)dx \\
& = -\int_{\bR^d} r_l''\left(w_m^{(\ep)}(s,x)\right)  w_{mx^i}^{(\ep)}(s,x) \left[ \left(a^{ij}w_{m}\right)_{x^j}^{(\ep)}(s,x) - a^{ij}(s,x)w_{mx^j}^{(\ep)}(s,x) \right]\psi_k(x) dx \\
&\quad- \int_{\bR^d} a^{ij}(s,x) r_l''\left(w_m^{(\ep)}(s,x)\right)  w_{mx^i}^{(\ep)}(s,x) w_{mx^j}^{(\ep)}(s,x)  \psi_k(x)dx \\
& \quad -\int_{\bR^d} r'_l\left( w_m^{(\ep)}(s,x) \right)  \left[ \left( a^{ij}w_m \right)_{x^j}^{(\ep)}(s,x) - a^{ij}(s,x)w_{mx^j}^{(\ep)}(s,x) \right] \psi_{kx^i}(x)dx \\
& \quad -\int_{\bR^d} r'_l\left( w_m^{(\ep)}(s,x) \right)     a^{ij}(s,x)w_{mx^j}^{(\ep)}(s,x)  \psi_{kx^i}(x)dx. 
  \end{aligned}
\end{equation} Applying \eqref{eq:approximation of the difference of leading terms} and Young's inequality, we have 
\begin{equation}\label{eq:bound of A}
  \begin{aligned}
    ({\bf A}) &\leq N\int_{\bR^d} r_l''\left(w_m^{(\ep)}(s,x)\right)  \left| w_{mx^i}^{(\ep)}(s,x) \right|\int_{\bR^d} |w_m(s,x-\ep y)||y\zeta_{y^j}(y)|dy\psi_k(x)dx\\
&\quad - K^{-1}\int_{\bR^d} r_l''\left( w_m^{(\ep)}(s,x) \right)\left| w_{mx^j}^{(\ep)}(s,x) \right|^2 \psi_k(x)dx \\
& \quad + \int_{\bR^d} r_l'\left( w_m^{(\ep)}(s,x) \right)\int_{\bR^d}|w_m(s,x-\ep y)||y\zeta_{y^j}(y)|dy\psi_{kx^i}(x)dx \\
& \quad + \int_{\bR^d} r_l\left( w_m^{(\ep)}(s,x) \right) \left[ a_{x^j}^{ij}(s,x)\psi_{kx^i}(x) + a^{ij}(s,x)\psi_{kx^ix^j}(x) \right] dx \\
&\leq N\int_{\bR^d} r_l''\left(w_m^{(\ep)}(s,x)\right)  \left( \int_{\bR^d} |w_m(s,x-\ep y)||y\zeta_{y^i}(y)|dy \right)^2\psi_k(x)dx\\
&\quad - \frac{1}{2}K^{-1}\int_{\bR^d} r_l''\left( w_m^{(\ep)}(s,x) \right)\left| w_{mx^i}^{(\ep)}(s,x) \right|^2 \psi_k(x)dx \\
& \quad + k^{-1}\int_{\bR^d} r_l'\left( w_m^{(\ep)}(s,x) \right)\int_{\bR^d}|w_m(s,x-\ep y)||y\zeta_{y^j}(y)|dy\psi_{k}(x)dx \\
& \quad + k^{-1}\int_{\bR^d} r_l\left( w_m^{(\ep)}(s,x) \right) \psi_k(x) dx,
  \end{aligned}
\end{equation} where $N=N(K)>0$ and we have used \eqref{eq:derivatives of psi}. Now we estimate $({\bf B})$ using again the integration by parts, Young's inequality, and \eqref{eq:derivatives of psi} as follows:
\begin{equation}\label{eq:bound of B}
  \begin{aligned}
    ({\bf B}) &=   \int_{\bR^d} r_l''\left( w_m^{(\ep)}(s,x) \right) w_{mx^i}^{(\ep)}(s,x)\left[  2 \left(a^{ij}_{x^j}w_m\right)^{(\ep)}(s,x) - \left(b^i w_m\right)^{(\ep)}(s,x) \right] \psi_k(x)dx \\
& \quad + \int_{\bR^d} r_l'\left( w_m^{(\ep)}(s,x) \right) \left[  2 \left(a^{ij}_{x^j}w_m\right)^{(\ep)}(s,x) - \left(b^i w_m\right)^{(\ep)}(s,x) \right]\psi_{kx^i}(x) dx \\
& \leq N\int_{\bR^d} r_l''\left( w_m^{(\ep)}(s,x) \right) \left( \left(|w_m(s,\cdot)|\right)^{(\ep)}(x) \right)^2 \psi_k(x)dx \\
& \quad + \frac{1}{4}K^{-1}\int_{\bR^d} r_l''\left( w_m^{(\ep)}(s,x) \right) \left| w_{mx^i}^{(\ep)}(s,x) \right|^2 \psi_k(x)dx \\
& \quad + k^{-1}N\int_{\bR^d} r_l'\left( w_m^{(\ep)}(s,x) \right) \left(|w_m(s,\cdot)|\right)^{(\ep)}(x) \psi_{k}(x) dx,
  \end{aligned}
\end{equation} where we also used \eqref{boundedness_of_deterministic_coefficients} and $N=N(K)>0$. Since each integral in $({\bf C})$ is nonnegative (see the definition of $r_l$ in \eqref{def of rn}), we see that 
\begin{equation}\label{eq:bound of C}
   \begin{aligned}
     ({\bf C})&\leq N \int_0^t\int_{\bR^d} r_l'\left( w_m^{(\ep)}(s,x) \right)(|w_m(s,\cdot)|)^{(\ep)}(x) \psi_k(x) e^{-qs} dxds \\
&\quad -q\int_0^t \int_{\bR^d} r_l\left( w_m^{(\ep)}(s,x) \right)  \psi_k(x)e^{-qs} dxds.
   \end{aligned}
 \end{equation} Collecting all bounds on $({\bf A}),({\bf B})$ and $({\bf C})$ 
 in \eqref{eq:bound of A}--\eqref{eq:bound of C}, we obtain that 
\begin{equation}
\begin{aligned}
\int_{\bR^d}r_l&\left( w_m^{(\ep)}(t,x) \right) \psi_k(x)e^{-qt}dx \\
&\leq N\sum_i\int_0^t\int_{\bR^d} r''_l\left( w_m^{(\ep)}(s,x) \right)  \left( \int_{\bR^d} |w_m(s,x-\ep y)||y\zeta_{y^i}(y)|dy \right)^2\psi_k(x)e^{-qs}dx ds\\
&\quad + N\int_0^t\int_{\bR^d} r''_l\left( w_m^{(\ep)}(s,x) \right) \left( \left(|w_m(s,\cdot)|\right)^{(\ep)}(x) \right)^2\psi_k(x) e^{-qs}dx  ds \\
&\quad + k^{-1}\sum_i \int_0^t\int_{\bR^d} r_l'\left( w_m^{(\ep)}(s,x) \right)\int_{\bR^d}|w_m(s,x-\ep y)||y\zeta_{y^i}(y)|dy \psi_k(x) e^{-qs}dxds \\
&\quad + k^{-1}\int_0^t\int_{\bR^d} r_l\left( w_m^{(\ep)}(s,x) \right) \psi_k(x) e^{-qs} dxds \\
&\quad +(k^{-1}+1)N \int_0^t\int_{\bR^d} r_l'\left( w_m^{(\ep)}(s,x) \right)(|w_m(s,\cdot)|)^{(\ep)}(x) \psi_k(x) e^{-qs} dxds \\
&\quad -q\int_0^t \int_{\bR^d} r_l\left( w_m^{(\ep)}(s,x) \right)  \psi_k(x)e^{-qs} dxds.
\end{aligned} 
\end{equation} Letting $\ep\downarrow 0$, by \eqref{prop of varphi}--\eqref{prop of r}, we have 
\begin{equation}
  \begin{aligned}
    &\int_{\bR^d}r_l(w_m(t,x)) \psi_k(x)e^{-qt}dx \\
&\quad\leq N\int_0^t\int_{\bR^d} r''_l(w_m(s,x)) w_m^2(s,x) \psi_k(x) e^{-qs} dxds \\
&\quad\quad + k^{-1}\int_0^t\int_{\bR^d} r_l(w_m(s,x)) \psi_k(x) e^{-qs} dxds \\
&\quad\quad + (k^{-1}+1)N\int_0^t\int_{\bR^d} r_l'(w_m(s,x))w_m(s,x)\psi_k(x) e^{-qs}dxds \\
&\quad\quad - qN\int_0^t\int_{\bR^d} r_l(w_m(s,x))\psi_k(x) e^{-qs}dxds \\
&\quad\leq (N-q)\int_0^t\int_{\bR^d}  w_m(s,x) \psi_k(x) e^{-qs} dxds + q\delta_{l-1}\int_0^t\int_{\bR^d} \psi_k(x) e^{-qs}dxds.
  \end{aligned}
\end{equation} Now choose a large $q$ such that $N-q<0$. Then by letting $l\rightarrow\infty$ and $k\rightarrow \infty$ in order, we have 
\begin{equation*}
\begin{aligned}
\int_{\bR^d} w_{m+}(t,x)dx e^{-qt} \leq 0. 
% & \leq N\int_0^t \int_{\bR^d} r_l''(w_m(s,x))|w_{m}(s,x)|^2dx e^{-4Ks} ds \\
% & = N\int_0^t \int_{\bR^d} \varphi_n(w_m(s,x))|w_{m}(s,x)|^2dx e^{-4Ks} ds \\
% & \leq Nn^{-1}\int_0^t \int_{\bR^d} w_{m+}(s,x)dx e^{-4Ks} ds.
\end{aligned}
\end{equation*}
Therefore, $w_{m+} = 0$ for all $t\leq \tau$ almost surely and this implies $u_m \leq v_m$. This completes the proof.

\end{proof}

Let all the assumptions of Lemma \ref{cut_off_lemma} hold and $u_m\in \cH_p^{\kappa}(\tau)$ be the solution to \eqref{cut_off_lemma} introduced in Lemma \ref{cut_off_lemma}. Then, {with the help of Lemma \ref{Lq bounds},} we can define a stopping time
\begin{equation*}
\begin{aligned}
&\tau_m(S) := \inf\left\{ t \leq \tau :  \int_0^t\int_{\bR}\left| u_m(s,x) \right |^{1+\beta}h_m(u_m(s,x))dxds\geq S \right\}.
\end{aligned}
\end{equation*}

\begin{lemma}
\label{lem:noise related solution control}
Let $\tau\leq T$. Assume $\beta>0$ and $\gamma>0$ satisfy \eqref{conditions on beta and gamma}. For $p\in\left(\frac{d+2}{\kappa},\frac{1+\beta}{\gamma}\right)$, $v_m \in \cH_p^{\kappa}(\tau)$ is the solutions to \eqref{eq:noise_dominating_equation} introduced in Lemma \ref{lem:well-posedness of v_m}. Then, we have
\begin{equation}
\label{bound of vm - 2}
\bE\sup_{t\leq \tau_m(S)}\sup_{x\in\bR}|u_m(t,x)|\leq \bE\sup_{t\leq \tau_m(S)}\sup_{x\in\bR}|v_m(t,x)| \leq N(\beta,\gamma,n,d,p,K,T,u_0,S).
\end{equation}
\end{lemma}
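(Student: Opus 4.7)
The first inequality is essentially immediate: Lemma \ref{cut_off_lemma} gives $u_m \geq 0$, and Lemma \ref{lem:upper bound of u_m by v_m} gives $u_m \leq v_m$, so $0 \leq u_m \leq v_m$ pointwise and hence $\bE\sup u_m \leq \bE \sup v_m$. The real content is the second inequality, and my plan is to use the $L_p$-theory to reduce $\bE\sup_{t,x}|v_m|$ to $\|v_m\|_{\cH_p^\kappa(\tau_m(S))}$ and then close a Young's-inequality loop using the strong dissipativity built into the definition of $\tau_m(S)$. Since $p>\frac{d+2}{\kappa}$, we may pick $\alpha_1,\alpha_2$ satisfying \eqref{condition for alpha1 and alpha 2}, and Corollary \ref{embedding_corollary} then yields
\begin{equation*}
\bE\sup_{t\leq\tau_m(S)}\sup_{x}|v_m(t,x)|^p \leq N\,\|v_m\|_{\cH_p^\kappa(\tau_m(S))}^p .
\end{equation*}

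Next, I apply Theorem \ref{theorem_nonlinear_case} to \eqref{eq:noise_dominating_equation}, treating the noise coefficient as a given process (it depends on $u_m$, not $v_m$). This gives
\begin{equation*}
\|v_m\|_{\cH_p^\kappa(\tau_m(S))}^p \leq N\bigl(\|u_0\|_{U_p^\kappa}^p + \|\xi\, u_{m+}^{1+\gamma}h_m(u_m)(f*\bm e)\|_{\bH_p^{\kappa-1}(\tau_m(S),\ell_2)}^p\bigr) .
\end{equation*}
Repeating the computation \eqref{cut_off_lower_order_cal_2} with $v=0$, the stochastic term is dominated by $\bE\int_0^{\tau_m(S)}\!\!\int u_m^{p(1+\gamma)} h_m(u_m)^p\,dx\,ds$. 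Now I exploit the strong dissipativity: set $p' := p(1+\gamma) - (1+\beta)$, which satisfies $p'<p$ thanks to the hypothesis $p<\frac{1+\beta}{\gamma}$. Since $0\leq h_m\leq 1$,
\begin{equation*}
u_m^{p(1+\gamma)} h_m(u_m)^p \leq u_m^{p'}\cdot u_m^{1+\beta}h_m(u_m),
\end{equation*}
and pulling out the $L^\infty_{t,x}$-norm of $u_m^{p'}$ together with the defining bound $\int_0^{\tau_m(S)}\!\int u_m^{1+\beta}h_m(u_m)\,dx\,ds \leq S$ yields
\begin{equation*}
\bE\int_0^{\tau_m(S)}\!\!\int u_m^{p(1+\gamma)} h_m(u_m)^p\,dx\,ds \leq S \cdot \bE\!\!\sup_{t\leq\tau_m(S),\,x}\!\!u_m^{p'}(t,x).
\end{equation*}

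To close the loop, I use $u_m\leq v_m$ and Jensen's inequality (since $p'<p$) to obtain
\begin{equation*}
\bE\sup u_m^{p'} \leq \bigl(\bE\sup v_m^{p}\bigr)^{p'/p} \leq N\,\|v_m\|_{\cH_p^\kappa(\tau_m(S))}^{p'} .
\end{equation*}
Combining the chain of estimates gives
\begin{equation*}
\|v_m\|_{\cH_p^\kappa(\tau_m(S))}^p \leq N\|u_0\|_{U_p^\kappa}^p + N\,S\,\|v_m\|_{\cH_p^\kappa(\tau_m(S))}^{p'},
\end{equation*}
and since Lemma \ref{lem:well-posedness of v_m} already guarantees that $\|v_m\|_{\cH_p^\kappa(\tau_m(S))}$ is finite, Young's inequality applied to the super-linear but sub-degree term on the right absorbs it into the left-hand side and produces a bound depending only on $\|u_0\|_{U_p^\kappa}$, $S$, and the structural constants. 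The key obstacle --- and the entire reason the theorem holds --- is establishing the strict inequality $p'<p$, which is precisely the window $p\in(\frac{d+2}{\kappa},\frac{1+\beta}{\gamma})$ afforded by the strong dissipativity condition $\gamma<\frac{\kappa(1+\beta)}{d+2}$; without it, Young's inequality would not close and the $L^\infty$ bound would fail.
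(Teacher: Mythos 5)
Your proposal is correct and follows essentially the same route as the paper: apply Theorem \ref{theorem_nonlinear_case} to \eqref{eq:noise_dominating_equation}, bound the noise term via the computation in \eqref{cut_off_lower_order_cal_2}, split $u_m^{p(1+\gamma)}h_m^p \leq u_m^{p'}\cdot u_m^{1+\beta}h_m$ with $p'=p(1+\gamma)-(1+\beta)<p$ (the paper's $p_0=p/p'$), use the definition of $\tau_m(S)$, $u_m\leq v_m$, Jensen, the embedding \eqref{embedding for vm}, and Young's inequality to absorb. The only (trivial) step left implicit is passing from the resulting bound on $\bE\sup_{t,x}|v_m|^p$ to the first moment in \eqref{bound of vm - 2} via Jensen's inequality.
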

\begin{proof}
Set 
\begin{equation}
\label{def of p0}
p_0 := \frac{p}{p(\gamma+1)-(1+\beta)}
\end{equation}
and notice that $p_0 >1$ since $p < \frac{1+\beta}{\gamma}$. We apply Theorem \ref{theorem_nonlinear_case} with the solution $v_m \in \cH_p^{\kappa}(\tau)$ in Lemma \ref{lem:well-posedness of v_m} to obtain 
\begin{equation}
\label{eq:bound of vm}
\begin{aligned}
\| v_m \|_{\cH_p^{\kappa}(\tau_m(S))}^p - N\| u_0 \|_{U_p^{\kappa}}^p
&\leq N\bE\int_0^{\tau_m(S)}\left\| \xi u^{1+\gamma}(t,\cdot)h_m(u_m(t,\cdot))(f\ast\bm{e}) \right\|_{H_p^{\kappa-1}}^p dt \\
&\leq \nu_{\kappa}^{p/2}N\bE\int_0^{\tau_m(S)}\int_{\bR^d}\left|  u^{1+\gamma}_+(t,\cdot) h_m(u(t,\cdot)) \right|^p dxdt, 
\end{aligned}
\end{equation} where the last inequality follows from a similar computation as in \eqref{cut_off_lower_order_cal_2}. Note that by the definition of $h_m$ (see \eqref{def of hm}), $(h_m)^p \leq h_m$. By the definition of $\tau_m(S)$, Jensen's inequality, Young's inequality, and \eqref{embedding for vm}, we have
\begin{equation}\label{eq:bound of vm continued}
  \begin{aligned}
    \bE&\int_0^{\tau_m(S)}\int_{\bR^d}\left|  u^{1+\gamma}_+(t,\cdot) h_m(u_m(t,\cdot)) \right|^p dxdt \\
& \leq \bE\int_0^{\tau_m(S)}\int_{\bR^d}  u^{1+\beta}_+(t,\cdot) h_m(u_m(t,\cdot)) dxdt\sup_{t\leq\tau_m(S)}\sup_{x\in\bR^d}|u_m(t,x)|^{p(1+\gamma) - (1+\beta)} \\
% & \leq N_S\left( \bE\sup_{t\leq\tau_m(S)}\sup_{x\in\bR^d}|v_m(t,x)|^{\delta(p(\gamma+1) - (1+\beta))} \right)^{1/\delta} \\
& \leq N(S)\left( \bE\sup_{t\leq\tau_m(S)}\sup_{x\in\bR^d}|v_m(t,x)|^{p} \right)^{1/p_0} \\
& \leq N(\beta,\gamma,\ep,p,S) +  \ep\bE\sup_{t\leq\tau_m(S)}\sup_{x\in\bR^d}|v_m(t,x)|^{p}  \\
& \leq N(\beta,\gamma,\ep,p,S) + \ep N_0 \| v_m \|_{\cH_p^\kappa(\tau_m(S))}^p,
  \end{aligned}
\end{equation} 
where $p_0$ is the constant introduced in \eqref{def of p0}. Then, by taking $\ep>0$ satisfying $\ep N_0 < 1/2$ and applying \eqref{eq:bound of vm continued} into \eqref{eq:bound of vm}, we get 
\begin{equation*}
\| v_m \|_{\cH_p^\kappa(\tau_m(S))}^p 
\leq N_S + \frac{1}{2} \| v_m \|_{\cH_p^\kappa(\tau_m(S))}^p
\end{equation*} 
By subtracting $\frac{1}{2}\| v_m \|_{\cH_p^{1/2-\kappa}(\tau_m(S))}^p$ on both sides, we have $\| v_m \|_{\cH_p^\kappa(\tau_m(S))}^p \leq N_S$. Thus, by employing \eqref{embedding for vm} again with Jensen's inequality, we have the second inequality of \eqref{bound of vm - 2}. The first inequlaity follows from Lemma \ref{lem:upper bound of u_m by v_m}. This completes the proof.

\end{proof}

Now we show that the local solution does not explode in finite time.

\begin{lemma}\label{lem:uniform bound of local solutions in probability}
Suppose all the conditions of Theorem \ref{thm:main} holds. Let $u_m$ be the solution introduced in Lemma \ref{cut_off_lemma}. Then, for any $T<\infty$, we have
\begin{equation}
\label{stopping_time_blow_up}
\lim_{R\to\infty}\sup_{m\in\bN}\bP\left( \left\{ \omega\in\Omega:\sup_{t\leq T,x\in\bR} |u_m(t,x)| > R \right\} \right) = 0.
\end{equation}
\end{lemma}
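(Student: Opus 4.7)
The plan is to split the supremum event according to whether the stopping time $\tau_m(S)$ reaches $T$, and apply Markov's inequality in conjunction with the two uniform bounds already established (Lemma \ref{Lq bounds} and Lemma \ref{lem:noise related solution control}). The argument is essentially a two-parameter ``large deviation'' decomposition: $S$ controls how close $\tau_m(S)$ is to $T$, and $R$ controls the supremum of $u_m$ on $[0,\tau_m(S)]$.

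First, I would take $\tau = T$ throughout (so $\tau_m(S) \leq T$) and observe that on the event $\{\tau_m(S) = T\}$ we have $\sup_{t \leq T} |u_m(t,\cdot)| = \sup_{t \leq \tau_m(S)} |u_m(t,\cdot)|$. This gives the decomposition
\begin{equation*}
\bP\!\left(\sup_{t\leq T,\,x\in\bR}|u_m(t,x)| > R\right) \leq \bP(\tau_m(S) < T) + \bP\!\left(\sup_{t\leq \tau_m(S),\,x\in\bR}|u_m(t,x)| > R\right).
\end{equation*}

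Next, I would estimate each of the two terms. For the first, by the definition of $\tau_m(S)$ together with Markov's inequality and the uniform $L_{1+\beta}$ bound of Lemma \ref{Lq bounds},
\begin{equation*}
\bP(\tau_m(S) < T) \leq \bP\!\left( \int_0^T\!\!\int_{\bR} u_{m+}^{1+\beta}(s,x)h_m(u_m(s,x))\,dx\,ds \geq S \right) \leq \frac{1}{S}\, N(K,T)\, \bE\|u_0\|_{L_1},
\end{equation*}
which is independent of $m$. For the second, by Markov's inequality and Lemma \ref{lem:noise related solution control},
\begin{equation*}
\bP\!\left(\sup_{t\leq \tau_m(S),\,x\in\bR}|u_m(t,x)| > R\right) \leq \frac{1}{R}\,\bE\sup_{t\leq \tau_m(S),\,x\in\bR}|u_m(t,x)| \leq \frac{N(\beta,\gamma,d,p,K,T,u_0,S)}{R}.
\end{equation*}

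Finally, I would combine these two bounds. For any fixed $\varepsilon>0$, first choose $S = S(\varepsilon)$ large enough that the first term is less than $\varepsilon/2$ uniformly in $m$; then for that fixed $S$, choose $R = R(\varepsilon,S)$ large enough that the second term is less than $\varepsilon/2$ uniformly in $m$. This yields $\sup_m \bP(\sup_{t\leq T,x}|u_m(t,x)|>R) < \varepsilon$ for all $R \geq R(\varepsilon,S)$, and \eqref{stopping_time_blow_up} follows on letting $\varepsilon\downarrow 0$. There is no genuine obstacle here beyond lining up the two previously proved uniform bounds with the right order of limits; the substantive work has already been done in Lemmas \ref{Lq bounds} and \ref{lem:noise related solution control}.
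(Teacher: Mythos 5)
Your proposal is correct and follows essentially the same route as the paper: decompose on the event $\{\tau_m(S)<T\}$, bound $\bP(\tau_m(S)<T)\leq NS^{-1}$ via Chebyshev/Markov and Lemma \ref{Lq bounds}, bound the supremum on $[0,\tau_m(S)]$ via Markov and Lemma \ref{lem:noise related solution control}, and send $R\to\infty$ then $S\to\infty$. The only cosmetic difference is that the paper applies Chebyshev at the $p$-th power ($N_S R^{-p}$) while you use the first moment ($N_S R^{-1}$), which matches the literal statement of Lemma \ref{lem:noise related solution control} and is equally valid.
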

\begin{proof}
Let $T<\infty$. Suppose $v_m$ is the solution introduce in Lemma \ref{lem:well-posedness of v_m}. 
{By Lemma \ref{Lq bounds} and Chevyshev's inequality, we have
}
\begin{equation*}
\begin{aligned}
\bP\left( \tau_m(S) < T \right) 
&\leq \bP\left( \int_0^T\int_{\bR}|u_m(s,x)|^{1+\beta}h_m(u_m(s,x))dxds \geq S \right) \\
&\leq  S^{-1}\bE\int_0^T\int_{\bR}|u_m(s,x)|^{1+\beta}h_m(u_m(s,x))dxds \\
&\leq NS^{-1},
\end{aligned}
\end{equation*}
where $N$ is independent of $m$.
Thus, by Chebyshev's inequality {and Lemma \ref{lem:noise related solution control}}, we have
\begin{equation*}
\begin{aligned}
\bP\left( \sup_{t\leq T,x\in\bR^d}|u_m(t,x)| > R \right) 
&\leq \bP\left( \sup_{t\leq\tau_m(S)\wedge T,x\in\bR^d} |u_m(t,x)| > R \right) + \bP\left( \tau_m(S) < T \right) \\
& \leq R^{-p}\bE\sup_{t\leq \tau_m(S)\wedge T,x\in\bR^d}|u_m(t,x)|^p + NS^{-1}, \\
&\leq N_SR^{-p} + NS^{-1}, \\
\end{aligned}
\end{equation*}
where $N_S$ is independent of $m$ and $N$ is independent of $m$ and $S$. Therefore, by letting $R\to\infty$ and $S\to\infty$ in order, the lemma is proved.

\end{proof}

\vspace{0.3cm}

\subsection{Proof of Theorem \ref{thm:main}}\label{subsec:proof of main thm}
We are ready to prove Theorem \ref{thm:main}.
\begin{proof}[\textbf{Proof of Theorem \ref{thm:main}}]

{\it (Step 1). (Uniqueness). }
Suppose $u,\bar u\in \cH_{p,loc}^{\kappa}$ are nonnegative solutions of \eqref{main_equation}. By Definition \ref{definition_of_sol_space} \eqref{def_of_local_sol_space}, there are bounded stopping times $\tau_n$, $n = 1,2,\cdots$ 
such that
\begin{equation*}
\tau_n\uparrow\infty\quad\mbox{and}\quad u, \bar u \in \cH_{p}^{\kappa}(\tau_n).
\end{equation*}
Fix $n\in\bN$. Since $p > \frac{3}{\kappa}$, by Corollary \ref{embedding_corollary}, we have $u,\bar{u} \in C([0,\tau_n];C(\bR^d))$ (a.s.) and
\begin{equation}
\label{embedding in the proof of uniqueness_infinite_noise_1}
\bE\sup_{t\leq\tau_n}\sup_{x\in\bR^d}|u(t,x)|^p + \bE\sup_{t\leq\tau_n}\sup_{x\in\bR^d}|\bar u(t,x)|^p < \infty.
\end{equation}
For $m \in \bN$, define
\begin{equation*}
\begin{gathered}
\tau_{m,n}^1:=\inf\left\{t\geq0:\sup_{x\in\bR^d}|u(t,x)|> m\right\}\wedge\tau_n, \quad \tau_{m,n}^2:=\inf\left\{t\geq0:\sup_{x\in\bR^d}|\bar u(t,x)|> m\right\}\wedge\tau_n,
\end{gathered}
\end{equation*}
and 
\begin{equation} 
\label{stopping_time_cutting}
\tau_{m,n}:=\tau_{m,n}^1\wedge\tau_{m,n}^2.
\end{equation}
Due to \eqref{embedding in the proof of uniqueness_infinite_noise_1}, $\tau_{m,n}^1$ and $\tau_{m,n}^2$ are well-defined stopping times, and thus $\tau_{m,n}$ is a stopping time.
Observe that $u,\bar u\in\cH_p^{\kappa}(\tau_{m,n})$ and  $\tau_{m,n}\uparrow \tau_n$ as $m\to\infty$ almost surely. Fix $m\in\bN$. Notice that both $u,\bar u\in\cH_p^{\kappa}(\tau_{m,n})$ are solutions to 
\begin{equation*}
dv = \left(  Lv - \bar{b}  v_+^{1+\beta}h_m(v)  \right)\,dt + \xi v_+^{1+\gamma}{ h_m(v)}(f\ast e_k)  dw^k_t, \quad 0<t\leq\tau_{m,n};\quad v(0,\cdot)=u_0,
\end{equation*}
{where $Lv = a^{ij}v_{x^ix^j} + b^i v_{x^i} + cv$.}
By the uniqueness result in Lemma \ref{cut_off_lemma}, we conclude that $u=\bar u$ in $\cH_p^{\kappa}(\tau_{m,n})$ for each $m\in\bN$. The monotone convergence theorem yields $u=\bar u$ in $\cH_p^{\kappa}(\tau_n)$. Since $n\in\bN$ is arbitrary, this implies $u = \bar u$ in $\cH_{p,loc}^{\kappa}$.
\\

{\it (Step 2). (Existence).} The idea of the proof folllows from \cite[Section 8.4]{kry1999analytic} and \cite[Theorem 2.11]{han2019boundary}.
By Lemma \ref{cut_off_lemma}, there exists a nonnegative solution $u_m\in\cH_{p}^{\kappa}(T)$ satisfying \eqref{eq:cut_off_equation}. By Corollary \ref{embedding_corollary}, we have $u_m\in C([0,T];C(\bR^d))$ (a.s.) and
\begin{equation*}
\bE\sup_{t\leq T}\sup_{x\in\bR^d}|u_m(t,x)| < \infty
\end{equation*}
For $R\in\{ 1,2,\dots,m-1 \}$, define
\begin{equation} \label{stopping_time_taumm_1}
\tau_m^R 
:= \inf\left\{ t\geq0 : \sup_{x\in\bR^d} |u_m(t,x)|\geq R \right\}
\end{equation}
Then, $\tau_m^R$ is a well-defined stopping time. Additionally, observe that 
\begin{equation}
\label{monotonicity of stopping times}
\tau_R^R = \tau_m^R \leq \tau_m^m.
\end{equation}
Indeed, since $\sup\limits_{x\in\bR^d}|u_m(t,x)|\leq R$ for $t\leq \tau_m^R$,  we have $u_m\wedge m=u_m\wedge m\wedge R  = u_m \wedge R$ for $t\leq\tau_m^R$.  Thus, both $u_m$ and $u_R$ satisfy
\begin{equation*}
du = \left(Lu - \bar{b}u^{1+\beta}h_R(u) \right) dt + \xi u^{1+\gamma}h_R(u)   dw_t^k, \quad0<t\leq\tau_m^R
\end{equation*}
with initial data $u(0,\cdot)=u_0$. On the other hand, since $R < m$, $u_R\wedge R = u_R\wedge R\wedge m = u_R\wedge m$ for $t\leq \tau_R^R$. Then $u_m$ and $u_R$ satisfy
\begin{equation*}
du = \left(Lu - \bar{b}u^{1+\beta}h_R(u) \right) dt + \xi u^{1+\gamma}h_R(u)   dw_t^k,   dw_t^k, \quad0<t\leq\tau_R^R
\end{equation*}
with initial data $u(0,\cdot)=u_0$. Thus, we have $u_m=u_R$ in $\cH_p^{\kappa}\left((\tau_m^R\vee\tau_R^R)\wedge T\right)$ by the uniqueness in Lemma \ref{cut_off_lemma}. 
% Now we claim that 
% \begin{equation}
% \label{monotonicity of stopping times}
% \tau_R^R=\tau_m^R\leq\tau_m^m
% \end{equation}
% almost surely. 
Then, for $t\leq\tau_m^R$, 
\begin{equation*}
\sup_{s\leq t}\sup_{x\in\bR^d}|u_R(s,x)|=\sup_{s\leq t}\sup_{x\in\bR^d}|u_m(s,x)|\leq R,
\end{equation*}
and this implies  $\tau_m^R\leq\tau_R^R$. Similarly, we have $\tau_m^R\geq\tau_R^R$. Besides, since $m > R$, we have $\tau_m^m \geq \tau_m^R$. Therefore, we have \eqref{monotonicity of stopping times}. 

Next, by \eqref{stopping_time_blow_up},
\begin{equation*}
\begin{aligned}
\limsup_{m\to\infty}  \bP\left( \tau_m^m < T \right) 
&\leq \limsup_{m\to\infty}\sup_{n} \bP\left(\sup_{t\leq T,x\in\bR^d}|u_m(t,x)|\geq m\right)\to 0,
\end{aligned}
\end{equation*}
which implies $\tau_m^m\to \infty$ in probability. Since $\tau_m^m$ is increasing,  we conclude that $\tau_m^m\uparrow \infty$ (a.s.). Define $\tau_m := \tau_m^m\wedge m$ and
\begin{equation*}
u(t,x):=u_m(t,x)\quad \text{for}~ t\in[0,\tau_m].
\end{equation*}
Note that $u$ satisfies \eqref{main_equation} for all $t \leq \tau_m$, because $|u(t,\cdot)| = |u_m(t,\cdot)|\leq m$ for $t\leq\tau_m$. Since $u = u_m \in \cH_p^{\kappa}(\tau_m)$ for any $m$, we have $u\in\cH_{p,loc}^{\kappa}$. This concludes the proof.
\end{proof}

\vspace{0.3cm}

\subsection{Proof of Theorem \ref{uniqueness of solution in p}}\label{subsec:uniqueness of solution in p}
In this section, we prove Theorem \ref{uniqueness of solution in p}.
\begin{proof}[\textbf{Proof of Theorem \ref{uniqueness of solution in p}}]

By the assumption, $u\in \cH_{p,loc}^\kappa$ is a unique solution to \eqref{main_equation}. By Definition \ref{definition_of_sol_space} and \eqref{holder regularity of solution}, there exists a sequence of stopping times $\{ \tau_l\}_{l\in \bN}$ such that $\tau_l \to \infty$ as $l\to\infty$ and 
\begin{equation*}
\sup_{t\leq\tau_l}\sup|u(t,x)| \leq l \quad (a.s.).
\end{equation*}
Then, we have
\begin{equation*}
\begin{aligned}
\int_0^{\tau_l}\int_{\bR^d}|u(t,x)|^qdxdt &= \int_0^{\tau_l}\int_{\bR^d}|u(t,x)|^p|u(t,x)|^{q-p}dxdt \\
& \leq \left(\sup_{t\leq\tau_l} \sup_{x\in\bR^d}|u(t,x)|\right)^{q-p}\int_0^{\tau_l}\|u(t,\cdot)\|^p_{L_p}dt \\
& \leq  l^{q-p}\int_0^{\tau_l}\|u(t,\cdot)\|^p_{H_p^{\kappa}}dt<\infty \quad \mbox{(a.s.)}.
\end{aligned}
\end{equation*}
Therefore, we can define a bounded stopping time
\begin{equation*}
\tau_{l,l_0}:=\tau_l\wedge\inf\left\{t>0:\int_0^{t}\|u(t,\cdot)\|^q_{L_q}dt >l_0\right\}
\end{equation*}
such that $\tau_{l,l_0}\uparrow \tau_l$ as $ l_0\to\infty$ and $u \in\bL_{q}(\tau_{l,l_0})$ for each $l_0\in\bN$. Thus, it suffices to show that $u\in \cH_q^\kappa(\tau_{l,l_0})$ to show $u\in \cH_{q,loc}^\kappa$.

Observe that a similar computation as in \eqref{cut_off_lower_order_cal_2} with $v = 0$ implies
\begin{equation}
\label{regularity_up}
\begin{aligned}
\left\| \xi u^{1+\gamma}(f\ast \bm{e}) \right\|_{\bH^{\kappa-1}_q(\tau_{l,l_0},\ell_2)}^q
\leq N\int_0^{\tau_{l,l_0}}\int_{\bR^d}|u(t,x)|^{q(1+\gamma)}dxdt 
\leq Nl^{q\gamma}\int_0^{\tau_{l,l_0}}\int_{\bR^d}|u(t,x)|^{q}dxdt. 
\end{aligned}
\end{equation}
Moreover, notice that
\begin{equation}
\label{regularity_up_1}
\left\| \bar b u^{1+\beta} \right\|_{\bH^{\kappa-2}_q(\tau_{l,l_0},\ell_2)}^q
\leq N\int_0^{\tau_{l,l_0}}\int_{\bR^d}|u(t,x)|^{q(1+\beta)}dxdt 
\leq Nl^{q\beta}\int_0^{\tau_{l,l_0}}\int_{\bR^d}|u(t,x)|^{q}dxdt. 
\end{equation}
Notice that \eqref{regularity_up} and \eqref{regularity_up_1} imply $\xi u^{1+\gamma} (f\ast \bm{e})\in \bH_q^{\kappa-1}(\tau_{l,l_0},\ell_2)\subset \bH_q^{-1}(\tau_{l,l_0},\ell_2)$ and $\bar b u^{1+\beta} \in \bH_q^{\kappa-2}(\tau_{l,l_0})\subset \bH_q^{-2}(\tau_{l,l_0})$, respectively. Thus, $u\in\cH_q^0(\tau_{l,l_0})$ since $u_0\in U_q^\kappa \subset U_q^0$, $a^{ij}u_{x^ix^j}+b^iu_{x^i}+cu\in\bH_q^{\kappa-2}(\tau_{l,l_0})\subset\bH_q^{-2}(\tau_{l,l_0})$. Therefore, Theorem \ref{theorem_nonlinear_case} yields that there exists a unique solution $\hat u\in\cH^{\kappa}_q(\tau_{l,l_0})$ to the SPDE
\begin{equation*}
d\hat u=\left(a^{ij}\hat u_{x^ix^j} + b^i \hat u_{x^i} + c \hat u - \bar b u^{1+\beta}\right)\,dt+\sum_{k=1}^{\infty}\xi u^{1+\gamma} (f\ast e_k)dw^k_t, \quad 0<t\leq\tau_{l,l_0}; \quad \hat u(0,\cdot)=u_0.
\end{equation*}
It should be remarked that the nonlinear terms of the above equation are $-\bar b u^{1+\beta}$ and $\xi u^{1+\lambda} (f\ast e_k)$, not $\bar b \hat u^{1+\beta}$ and $\xi\hat u^{1+\lambda} (f\ast e_k)$.  Since $\varphi := u-\hat u\in \cH_q^0(\tau_{l,l_0})$ satisfies the deterministic PDE
\begin{equation*}
 d\varphi=\left( a^{ij}\varphi_{x^ix^j} + b^i \varphi_{x^i} + c \varphi \right)\,dt, \quad0<t\leq\tau_{l,l_0}; \quad \varphi(0,\cdot)=0,
\end{equation*}
the deterministic version of Theorem \ref{theorem_nonlinear_case} (see \cite{ladyvzenskaja1988linear}) implies  $u=\hat u$ almost all $(\omega,t,x)$. Therefore, $u\in\cH^{\kappa}_q(\tau_{l,l_0})$. This proves the theorem.

\end{proof}

\vspace{2mm}

\section*{Acknowledgement}
J. Yi was supported by Samsung Science and Technology Foundation under Project Number SSTF-BA1401-51. B.-S. Han was supported by the National Research Foundation of Korea (NRF) grant
funded by the Korea government (MSIT) (No. NRF-2021R1C1C2007792).

\bibliographystyle{alpha}		

% \bibliography{refs}
\newcommand{\etalchar}[1]{$^{#1}$}

\end{document}